\theoremstyle{plain}
\newtheorem{theorem}{Theorem}[section]
\newtheorem{lemma}[theorem]{Lemma}
\newtheorem{corollary}[theorem]{Corollary}
\newtheorem{proposition}[theorem]{Proposition}
\newtheorem{remark}[theorem]{Remark}
\theoremstyle{remark}
\newcommand{\ndN}{\mathbb{N}}
\newcommand{\ndZ}{\mathbb{Z}}
\newcommand{\ndR}{\mathbb{R}}
\newcommand{\ndC}{\mathbb{C}}
\renewcommand{\Pr}[1]{\mathbb{P}(#1)}
\newcommand{\Prb}[1]{\mathbb{P}\left(#1\right)}
\newcommand{\Ex}[1]{\mathbb{E}[#1]}
\newcommand{\Exb}[1]{\mathbb{E}\left[#1\right]}
\newcommand{\Va}[1]{\mathbb{V}[#1]}
\newcommand{\one}{{\mathbbm{1}}}
\newcommand{\convdis}{\,{\buildrel d \over \longrightarrow}\,}
\newcommand{\convp}{\,{\buildrel p \over \longrightarrow}\,}
\newcommand{\eqdist}{\,{\buildrel d \over =}\,}
\newcommand{\atv}{\,{\buildrel d \over \approx}\,}
\newcommand{\cE}{\mathcal{E}}
\newcommand{\cF}{\mathcal{F}}
\newcommand{\cG}{\mathcal{G}}
\begin{document}

\begin{frontmatter}
\title{Gibbs partitions: a comprehensive phase diagram}
\runtitle{Gibbs partitions: a comprehensive phase diagram}

\begin{aug}
\author[A]{\inits{B.}\fnms{Benedikt}~\snm{Stufler}\ead[label=e1]{benedikt.stufler@tuwien.ac.at}}
\address[A]{
Technische Universit\"at Wien, Austria\printead[presep={,\ }]{e1}}
\end{aug}

\begin{abstract}
	We study  Gibbs partition models, also known as composition schemes.  Our main results comprehensively describe their phase diagram, including a phase transition from the convergent case described in Stufler (2018, Random Structures \& Algorithms) to a new dense regime characterized by a linear number of components with fluctuations of smaller order quantified by  an $\alpha$-stable law for $1< \alpha \le 2$. We prove a functional scaling limit for a process whose jumps correspond to the component sizes and discuss applications to extremal component sizes.  At the transition we observe a mixture of the two asymptotic shapes. We also treat extended composition schemes and  prove a local limit theorem in a dilute regime with the limiting law being related to an $\alpha$-stable law for $0< \alpha < 1$. We describe the asymptotic size of the largest components via a point process limit.
\end{abstract}


\begin{keyword}[class=MSC]
\kwd[Primary ]{60G50}
\kwd[; secondary ]{05A16,60K35}
\end{keyword}

\begin{keyword}
\kwd{Gibbs partitions}
\kwd{composition schemes}
\kwd{combinatorial structures}
\end{keyword}

\end{frontmatter}

\section{Introduction}

Combinatorial structures of all kinds may often be decomposed into smaller components. When studying random discrete structures it is hence of interest to understand the behaviour of the number and sizes of these components. The large quantity and variety of classes of graphs, trees, permutations, etc.  motivates the development of a general theory that encompasses these models as special cases. 

A successful approach in this regard is the series of works~\cite{zbMATH01905956,MR2032426,MR2121024} which study random partitions satisfying a conditioning relation.  Another important approach is that of the Gibbs partition model~\cite{MR2245368,zbMATH05166312,MR2453776}. Both models appear to be quite general. They have a non-trivial intersection, but neither  encompasses the other. In combinatorial literature~\cite{MR1871555,MR2483235}, the term composition schema is often used  to refer to a Gibbs partition, since the generating series of its partition function is the functional composition of the generating series of the two associated weight sequences.

Works studying composition schemes include~\cite{zbMATH01135939,  MR1871555,zbMATH05166312, MR2453776, zbMATH06964621, zbMATH07359155, zbMATH07235577, 2021arXiv210303751B}. The present work provides a comprehensive phase diagram for their asymptotic shape, including new results and phases. Forthcoming independent works~\cite{mmca,mmcb} pursue an alternative analytic approach to this topic and provide combinatorial applications. Let us briefly outline our main contributions and provide context by detailing pre-existing results:

\paragraph*{The dense case}

We define and study a dense case, in which the number $N_n$ of components of the Gibbs partition model concentrates at $n / \mu$ for a specific constant $\mu>0$. We prove that the fluctuations have order $n^{1/\alpha}$ for some $1< \alpha \le 2$.  After a proper rescaling, they approach an $\alpha$-stable law. In fact, in Theorem~\ref{te:dense} we establish a precise local limit theorem of the form
\[
\lim_{n \to \infty} \sup_{\ell \ge \delta n} \left| L(n) n^{1/\alpha} \Pr{ N_n = \ell} - h\left( \frac{\ell - n/\mu}{L(n) n^{1/\alpha}}  \right)  \right| =0,
\]
with $h$ denoting the density of an  $\alpha$-stable law  and $L(n)$ denoting a slowly varying factor associated to the weight-sequences. An important ingredient in its proof are new recent results by~\cite{zbMATH07179510} on local probabilities for randomly stopped sums. We also study the sizes $K_1, \ldots, K_{N_n}$ of the components and establish in Theorem~\ref{te:functional} a functional limit theorem
\begin{align*}
	\left( \frac{\sum_{i=1}^{\lfloor s N_n \rfloor}K_i - N_n s \mu }{L(n)n^{1/\alpha}}, \,\, 0 \le s \le 1 \right) \convdis (\mu Y_s, \,\, 0 \le s \le 1)
\end{align*}
in the space $\mathbb{D}([0,1], \ndR)$ of càdlàg functions as $n \to \infty$. Here $(Y_s)_{s \ge 0}$ denotes the spectrally positive L\'evy process with Laplace exponent $\Exb{e^{-t Y_s }} = \exp(s t^\alpha)$.  The functional limit theorem entails, for example, that the  largest component  admits the largest jump of $(Y_s)_{0 \le s \le 1}$ as scaling limit. Hence
\begin{align*}
	\frac{1}{L(n)n^{1/\alpha}} \max_{1 \le i \le N_n} K_i \convdis \begin{cases}W_1, &1<\alpha<2 \\ 0, &\alpha=2 \end{cases}
\end{align*}
for a random variable $W_1 \ge 0$ with cumulative distribution function  $\exp\left(\frac{\mu^\alpha}{\Gamma(1 - \alpha)} x^{-\alpha} \right)$. In case $1<\alpha<2$ we additionally get scaling limits for the $j$th largest component for each integer $j \ge 1$, see Corollary~\ref{co:jump}. The reason why the limit is degenerate in the case $\alpha=2$ is because in this case the process $(Y_s)_{s \ge 0}$ is continuous. We can provide more precise information in this case. In fact, our main observation Lemma~\ref{le:independent} for the proof of the functional limit theorem states that for any sequence $(s_n)_{n \ge 1}$ of real numbers satisfying
$\frac{s_n}{L(n)n^{1/\alpha}} \to \infty$ and  $s_n  = o(n)$
we have
\begin{align*}
	\lim_{n \to \infty}	d_{\mathrm{TV}}\left( (K_1, \ldots, K_{\min(N_n, \lfloor n / \mu - s_n\rfloor)}), (X_1, \ldots, X_{\lfloor n/ \mu - s_n\rfloor} ) \right)  = 0
\end{align*}
for an independent identically distributed family $(X_i)_{i \ge 1}$ of random variables. This has far-reaching applications: any functional that typically does not get perturbed by $o(n)$ coordinates behaves like a functional of $n/\mu$ i.i.d. random variables. This allows us to transfer a multitude of asymptotics for i.i.d. variates to the Gibbs partition model with little effort. As an example, we provide applications to order statistics, see Corollaries~\ref{co:maxcomp} and~\ref{co:profile}.

We note that in the dense case, the composition scheme associated to the Gibbs partition may be critical or supercritical. 

\paragraph*{The convergent case}

The convergent case exhibits a strong gelation phenomenon, as a unique giant component emerges. The number of components admits a finite distributional limit
\[
N_n \convdis \hat{N},
\]
and  the collection of non-maximal components also admits a finite limit with  Boltzmann type distribution. Specifically, in Theorem~\ref{te:convergent} we show that the
result $\phi(K_1, \ldots, K_{N_n})$ of replacing the left-most maximal component by a placeholder value $*$ satisfies
\begin{align*}
	\phi(K_1, \ldots, K_{N_n}) \convdis \left(X_1, \ldots, X_{J-1}, *, X_{J}, \ldots, X_{\hat{N}-1}\right),
\end{align*}
for an independent identically distributed family $(X_i)_{i \ge 1}$ of finite random variables and a conditionally independent uniformly selected index $J \in \{1, \ldots, \hat{N}-1\}$. Thus, there is an asymptotically unique giant component whose size $M_n$ satisfies
\begin{align*}
	n - M_n \convdis \sum_{i=1}^{\hat{N}-1} X_i.
\end{align*}
Such a behaviour was observed in~\cite{MR2121024} for the mentioned models of random partitions satisfying a conditioning relation. To this end, the work~\cite{MR2121024} uses   a  perturbed Stein recursion approach. For Gibbs partitions with a subcritical composition scheme such a behaviour was later studied in~\cite{zbMATH06964621}, using the theory of subexponential probability distributions~\cite{MR0348393, MR714482,MR772907,MR3097424}.  The reason for revisiting this setting is that since the writing and publication of~\cite{zbMATH06964621}, new results~\cite{zbMATH07179510} on local probabilities for randomly stopped sums have been established, allowing us to additionally treat Gibbs partitions with a critical composition scheme.

Prior work in the analytic combinatorics literature treat cases of subcritical composition schemes, assuming that the generating series of the ``inner'' weight-sequence admits a suitable singularity expansion. Specifically, in~\cite[Thm. 1]{zbMATH01135939}  the size of the largest component was determined, and \cite[Prop. IX.1]{MR2483235} determined the asymptotic number of components under similar analytic assumptions. The approach is based on singularity analysis and analytic methods.

The convergent case studied in the present work via probabilistic methods encompasses these settings  and additionally shows that there is a  limit distribution for the small fragments. Furthermore, we observe that the convergent case includes both subcritical and critical composition schemes. This is important from a conceptual viewpoint, since   critical composition schemes  exhibiting gelation appear to be a new addition to the analytic combinatorics literature.

\paragraph*{The mixture case}

In the mixture case, we establish in Theorem~\ref{te:mixture} that the Gibbs partition behaves  with a limiting probability $p \in ]0,1[$ as in the dense case, and with limiting probability $1-p$ as in the convergent case. Here \[
p = \lim_{n \to \infty} \Pr{N_n \ge n / (2 \mu^{-1})},
\] and all mentioned limit theorems for the number and sizes of components in the dense case hold for the conditioned Gibbs partition \[
((K_1, \ldots, K_{N_n}) \mid N_n \ge n / (2 \mu^{-1})).
\] 
In particular, there is an associated parameter $\alpha \in ]1, 2]$, and a local limit theorem for $N_n$ with an $\alpha$-stable limit.
Likewise, all mentioned limit theorems for the convergent case hold for the conditioned partition
\[
((K_1, \ldots, K_{N_n}) \mid N_n < n / (2 \mu^{-1})).
\]

The case $\alpha = 3/2$ has been previously studied using singularity analysis and saddle-point bounds in the well-known work~\cite{MR1871555}, alongside important applications to various combinatorial models. Specifically,~\cite[Thm. 5]{MR1871555} establishes the asymptotic local probabilities for $\Pr{N_n = \ell}$, assuming that the generating series of the weight-sequences satisfy suitable singularity expansions that result in asymptotic power laws for the coefficients.

In the present work we treat a more general setting, allowing regularly varying weight-sequences and domains of attractions of $\alpha$-stable laws for $1 < \alpha \le 2$. We additionally obtain precise information on the component sizes, showing that a giant component arises in the case $N_n < n / (2 \mu^{-1})$, with a finite distributional limit for the collection of non-maximal components. The functional limit theorem for the component sizes in the case $N_n \ge n / (2 \mu^{-1})$ is also new, even in the case $\alpha = 3/2$, as are the precise limit distribution for the second largest component and the total variational approximation of a linear fraction of components.

\paragraph*{The dilute case}

In the dilute case the number of components spreads out at the scale $n^{1/\alpha}$ for $0 < \alpha < 1$,  without any centring. In Theorem~\ref{te:dilute} we prove for each $\delta>0$ the local limit theorem
\begin{align*}
	\lim_{n \to \infty} \sup_{\ell \ge \delta n^{\alpha}} \left|n^{\alpha} \Pr{ N_n = \ell} - \tilde{f}(\ell / n^{\alpha})  \right| =0
\end{align*}
for a density function
\[
\tilde{f}(x) = \frac{1}{\alpha \Ex{(X_{\alpha}(\gamma, 1,0))^{\alpha(b-1)}}} \frac{f\left(\frac{1}{x^{1/\alpha}}\right)}{x^{b+1/\alpha}}
\]
constructed from the density $f$ of an $\alpha$-stable random variable $X_{\alpha}(\gamma, 1,0)$ with a scale parameter $\gamma>0$. Thus
\[
	N_n / n^{\alpha} \convdis Z
\]
for a random variable $Z>0$ with density function $\tilde{f}$. In Corollary~\ref{co:poiz} we deduce that if $k_1, k_2, \ldots$ is a sequence of positive integers with $k_n  \ll n^{\frac{\alpha}{1+\alpha}}$, then
\[
	\frac{\#_{k_n} P_n}{\Pr{X=k_n}n^{\alpha}} \convdis Z.
\]
Whereas, for $k_n \sim \left( \frac{c_w}{\upsilon W(\rho_w)} \right)^{\frac{1}{1+\alpha}} n^{\frac{\alpha}{1+\alpha}}$ with $0< \upsilon < \infty$ (and $c_w$, $W(\rho_w)$ denoting constants associated to the Gibbs partition model)
\[
		\#_{k_n} P_n \convdis \mathrm{Poi}(\upsilon Z).
\]
For $k_n \gg n^{\frac{\alpha}{1+\alpha}}$ we obtain
\[
		\#_{k_n} P_n \convdis 0.
\]
A similar local limit theorem for the number of components and similar component size asymptotics were recently established in~\cite[Thm. 4.1, Thm. 5.1]{2021arXiv210303751B}  alongside  important combinatorial applications. We also treat extended composition schemes considered in~\cite{2021arXiv210303751B}, see Section~\ref{sec:product}. 

Although $n^{\frac{\alpha}{1+\alpha}}$ appears to be a threshold for $\#_{k_n} P_n$, we show that maximal component sizes actually scale at the order $n$. In Corollary~\ref{co:pointprocess} we show that 
\[
	\sum_{\substack{1 \le i \le N_n \\ K_i > 0}} \delta_{K_i / n} \convdis \Upsilon
\]
for a point process $\Upsilon$ on $]0,1]$ with intensity 
\[
\frac{x^{-\alpha-1} (1-x)^{\alpha(2-b)-1}}{B(1-\alpha, \alpha(2-b))  }\,\mathrm{d}x.
\]
Here $B(\cdot, \cdot)$ denotes Euler's beta function. Consequently, the maximal component sizes $K_{(1)} \ge K_{(2)} \ge \ldots$ of $P_n$ converge jointly to the ranked points $\eta_1 \ge \eta_2 \ge \ldots$ of $\Upsilon$ after rescaling by $n^{-1}$, that is,
\[
	K_{(k)}/n \convdis \eta_k, \qquad k \ge 1 (\text{ jointly}).
\]
We verify that $\eta_k>0$ almost surely for each $k\ge1$, and in  Proposition~\ref{pro:explicit} we determine that the distribution of $\eta_k$ is given by
	\begin{multline*}
		\Pr{\eta_k < x} =  \frac{ \Gamma(1- \alpha(b-1)) }{ 2\pi |\Gamma(-\alpha)|^{b-1} \Gamma(2-b)}  \\\int_0^\infty \int_{-\infty}^\infty   u^{\alpha(b-1)} \exp\left( -\frac{1}{u^\alpha}\int_I \frac{e^{ityu}}{y^{1+\alpha}}\,\mathrm{d}y -iut -  |\Gamma(-\alpha)| (-it)^\alpha\right)   \sum_{j=0}^{k-1}\frac{1}{j!} \left( \frac{1}{u^\alpha}\int_I \frac{e^{ityu}}{y^{1+\alpha}} \,\mathrm{d}y \right)^j  \,\mathrm{d}t\,\mathrm{d}u
	\end{multline*}
for $0 < x \le 1$. 

\paragraph*{The expansive case}

The work~\cite{MR2453776} studies a Gibbs partition model corresponding to a subcritical composition scheme $\exp(W(z))$ with expansive coefficients $[z^n]W(z) \sim C n^{-a}$ for an arbitrary exponent $-\infty < a<1$. The expansive case of~\cite{MR2453776} constitutes a separate phase, that is not encompassed by the results of the present work.

\paragraph*{The superexponential case}

The series of works~\cite{MR0205883,MR0211880,MR0229546} studies composition schemes of the form $\exp(W(z))$ and determines conditions under which the associated Gibbs partition consists of a single component with probability tending to $1$ as the size tends to infinity.  In \cite[Sec. 6.5.2]{zbMATH07235577}  extensions of this result are provided, in particular to  composition schemes of the form $V(W(z))$ with $V(z)$ having positive radius of convergence and $W(z)$ having radius of convergence $0$. Weight sequences with non-analytic generating functions have also appeared in the probabilistic literature on random trees. In particular,~\cite{MR2860856,MR2908619} study recursive structures with superexponential weights, which give rise to composition schemes where both $V(z)$ and $W(z)$ have radius of convergence zero.

\paragraph*{Extended composition schemes}

All Gibbs partitions studied in this work (dense, convergent, mixture, and dilute cases) have the property, that their partition function satisfies a subexponentiality condition. This entails that if we study so-called extended composition schemes
\[
H(z)V(W(z)),
\]
then the limiting shape is a (possibly degenerate) mixture of a Boltzmann distribution and the limiting shape of the Gibbs partition corresponding to the schema $V(W(z))$, regardless whether it belongs to the dense, convergent, mixture, or dilute case. See Lemma~\ref{te:extended} and Corollary~\ref{co:extended} for details. Extended composition schemes related to the dilute case have also been studied in~\cite{2021arXiv210303751B} via analytic methods.

\subsection*{Further research directions}

It would be interesting to investigate whether  limiting behaviours related to $\alpha=1$ stable distributions may be observed under certain conditions. These kind of distributions have received recent attention in the context of random discrete structures~\cite{zbMATH07057468}. Looking at the very specific  ``blind spots'' that do not satisfy the assumptions in the theorems presented here gives some clues where such a transition \emph{might} occur, but we refrain from making concrete predictions. We also note that it would   be interesting to pursue the study of the expansive case treated in~\cite{MR2453776} further and extend it to more settings. Furthermore, there is a wealth of combinatorial models that fit into the Gibbs partition setting, see for example~\cite{MR2245368,MR2483235,MR1871555,2021arXiv210303751B}. It appears that the results of the present work may aid in proving new and interesting properties for some of these. We hope to pursue this in future work.

\subsection*{Plan of the paper}
Section~\ref{sec:prel} recalls the definition of the Gibbs partition model and fixes some notation. Section~\ref{sec:mainresults} states all main results. Specifically, Subsection~\ref{sec:dense} treats the dense case, Subsection~\ref{sec:convergent} treats the convergent case, Subsection~\ref{sec:mixture} treats the mixture case, Subsection~\ref{sec:dilute} treats the dilute case, and Subsection~\ref{sec:product} treats product structures and extended composition schemes. In Section~\ref{sec:proofs} we provide proofs of all results.

\section{Preliminaries}

\label{sec:prel}

\subsection{The Gibbs partition model}
\label{sec:gibbs}

Let $S$ be a finite non-empty set. A \emph{partition} of $S$ is a collection $P = \{S_1, \ldots, S_k\}$ of subsets of $S$ that are pairwise disjoint and satisfy $S = \bigcup_{i=1}^k S_i$. The elements of $P$ are its \emph{components}, and $k \ge 1$ is hence its number of components. We may form the collection $\mathrm{Part}(S)$ of all partitions of $S$.

Suppose that we are given sequences $\bm{v} = (v_i)_{i \ge 1}$ and $\bm{w} = (w_i)_{i \ge 0}$ of non-negative real numbers. This allows us to define the \emph{weight}
\[
u(P) =  |P|!v_{|P|} \prod_{Q \in P} |Q|!w_{|Q|} 
\]
of the partition $P$. Let $n \ge 1$ denote the number of elements of $S$. We define the  \emph{partition function}   by
\[
u_n =  \frac{1}{n!}\sum_{P \in \mathrm{Part}(S)} u(P).
\]
Note that $u_n \ge 0$  only depends on the cardinality $n$ of $S$.
The relation between the \emph{generating series} $V(x) = \sum_{i \ge 1} v_i x^i$, $W(x) = \sum_{i \ge 0} w_i x^i$, and $U(x) = \sum_{i \ge 0} u_i x^i$ is described by the composition
\[
U(x) = V(W(x))
\]
of formal power series. In the combinatorial literature, this relation is also called a \emph{composition schema}. We let $\rho_u, \rho_v, \rho_w \ge 0$ denote the radii of convergence of the series $U(x)$, $V(x)$, and $W(x)$. If these radii are positive, the composition scheme is called \emph{subcritical} if $W(\rho_w) < \rho_v$, \emph{critical} if $W(\rho_w) = \rho_v$, and \emph{supercritical} if $W(\rho_w)>\rho_v$. 

For any integer $n \ge 1$ with $u_n>0$ we may form the associated \emph{Gibbs partition} model, which describes a random element $P_n$ of   $\mathrm{Part}([n])$ for $[n] := \{1, \ldots, n\}$, with 
\[
\Pr{P_n = P} = \frac{u(P)}{n!u_n}
\]
for each $P \in \mathrm{Part}([n])$. We let $N_n$ denote the number of components of $P_n$, and $(K_1, \ldots, K_{N_n})$ their sizes.  Here we order the components in some canonical way, for example according to their smallest elements in increasing order. The tuple $(K_1, \ldots, K_{N_n})$ is exchangeable, hence the choice of ordering is of no particular importance. We let $M_n = \max_{1 \le i \le N_n} K_i$ denote the maximal component size.

If $\#_i P_n$ denotes the number of components of size $i \ge 0$, then for all families $(n_i)_{i \ge 0}$ of non-negative integers with $\sum_{i=0}^n n_i = n$ we have
\[
\Prb{ (\#_i P_n)_{i \ge 0} = (n_i)_{i \ge 0} } = \frac{r! v_{r}}{u_n} \prod_{i \ge 0} \frac{w_i^{n_i}}{n_i!}
\]
with $r := \sum_{i \ge 0} n_i$.

The Gibbs partition model is invariant under \emph{tilting} of the weight sequence $\bm{w}$. That is, for any constant $t > 0$ we may form the tilted sequence $\tilde{\bm{w}} = (\tilde{w}_i)_{i \ge 0}$ with $\tilde{w}_i = w_i t^i$, and the associated weights $\tilde{u}$ satisfy
\[
\tilde{u}(P) = t^n  u(P)
\]
for all $P \in \mathrm{Part}([n])$. Consequently, the associated Gibbs partition model $\tilde{P}_n$ satisfies
\[
P_n \eqdist \tilde{P}_n.
\]

\subsection{Notation}

For $0<\alpha \le 2$, $-1 \le \beta \le 1$, $\gamma>0$, and $-\infty < \delta < \infty$ we let $S_\alpha(\gamma, \beta, \delta)$ denote the $\alpha$-stable  distribution with scale parameter  $\gamma$, skewness parameter $\beta$, and location parameter $\delta$, so that the characteristic function of a $S_\alpha(\gamma, \beta, \delta)$-distributed random variable $X_\alpha(\gamma, \beta, \delta)$ is given by
\[
\Exb{e^{ i t X_\alpha(\gamma, \beta, \delta)}} = \begin{cases}
	\exp\left( - \gamma^\alpha |t|^\alpha \left(1 - i \beta  \mathrm{sgn}(t) \tan\left(\frac{\pi \alpha}{2}\right)\right) + i \delta t \right), &\alpha \ne 1 \\
	\exp\left(- \gamma |t| \left( 1 + i \beta \mathrm{sgn}(t) \frac{2}{\pi}  \log( |t|)\right) + i \delta t\right), &\alpha = 1.
\end{cases}
\]
Recall that a function $L: \ndR_{>0} \to \ndR_{>0}$ is  \emph{slowly varying}, if for all $t>0$
\[
\lim_{x \to \infty} \frac{L(tx)}{L(x)} = 0.
\]
For any $a\in \ndR$ the product  $L(x)x^a$   of a slowly varying function $L$ with a power of~$x$ is called \emph{regularly varying} with index~$a$.

We let $\ndN = \{1, 2, \ldots\}$ denote the set of positive integers, and $\ndN_0 = \ndN \cup \{0\}$ the set of non-negative integers. The $n$-th coefficient of a power series $f(z)$ is denoted by $[z^n]f(z)$.
Throughout the following all unspecified limits are taken as $n \to \infty$. For sequences $(a_n)_{n \ge 1}$, $(b_n)_{n \ge 1}$ of real numbers we write
\begin{align*}
	a_n \sim b_n & \qquad\text{if}\qquad \lim_{n \to \infty} a_n / b_n = 1, \\
	a_n = o(b_n) & \qquad\text{if}\qquad  \lim_{n \to \infty} a_n / b_n = 0, \\
	a_n \gg b_n & \qquad\text{if}\qquad  \lim_{n \to \infty} a_n / b_n = \infty, \\
	a_n = O(b_n) & \qquad\text{if}\qquad  \limsup_{n \to \infty} |a_n| / |b_n| < \infty \\
	a_n = \Theta(b_n) & \qquad\text{if}\qquad  a_n = O(b_n) \text{ and } b_n = O(a_n).
\end{align*}
We let $\convdis$ denote convergence in distribution, also called weak convergence of random variables. Convergence in probability is denoted by $\convp$. The total variation distance between (the laws of) two random variables $X$ and $Y$ is denoted by $d_{\mathrm{TV}}(X,Y)$. We write $X_n \atv Y_n$ if $d_{\mathrm{TV}}(X_n, Y_n) \to 0$ as $n \to \infty$. We let $O_p(1)$ denote a stochastically bounded random variable.

\section{Main results}
\label{sec:mainresults}

In the following sections we determine a comprehensive phase diagram for the asymptotic shape of the Gibbs partition~$P_n$ as $n \to \infty$.  

\subsection{The dense case}

\label{sec:dense}

In dense case the number of components in the Gibbs partition $P_n$ concentrates at a constant multiple of $n$, with fluctuations of order $n^{1/\alpha}$ quantified asymptotically by a stable law with index $1< \alpha \le 2$.

\begin{theorem}
	\label{te:dense}
	Suppose that $\rho_v>0$ and
	\[
	v_n = L_v(n) n^{-b}\rho_v^{-n} 
	\]
	for a slowly varying function $L_v$ and an exponent $b>1$. Furthermore suppose that one the following  cases holds.
	\begin{enumerate}[\qquad i)]
		\item 	We have $\rho_v = W(\rho_w)$ and 
		\[
		w_n = L_w(n) n^{- a} \rho_w^{-n}
		\]	
		for a slowly varying function  $L_w$ and an exponent $a>2$, such that  $1 < b <a$, or  $b=a$ and  $L_w(n) = o(L_v(n))$.
		\item We have $\rho_v < W(\rho_w)$ and $\gcd\{ n \ge 0 \mid w_n > 0\} = 1$.
	\end{enumerate}
	Set $\alpha = \min(a-1,2)$ in the first case, and $\alpha = 2$ in the second. Let 
	\[
	\mu = W(\rho_u)^{-1} \sum_{n \ge 1 } n w_n \rho_u^n.
	\] Then there exist a constant $\gamma > 0$ and a slowly varying function~$L$ specified in Equations~\eqref{eq:defofl1} and~\eqref{eq:defofgamma} below with
	\begin{align}
		\label{eq:denseclt}
		\frac{N_n - n/\mu}{L(n) n^{1/\alpha}} \convdis X_\alpha(\gamma, -1, 0)
	\end{align}
	as $n \to \infty$.
	Let $h$ denote the density function of $X_\alpha(\gamma, -1, 0)$. Then for any $\delta >0$
	\begin{align}
		\label{eq:densellt}
		\lim_{n \to \infty} \sup_{\ell \ge \delta n} \left| L(n) n^{1/\alpha} \Pr{ N_n = \ell} - h\left( \frac{\ell - n/\mu}{L(n) n^{1/\alpha}}  \right)  \right| =0.
	\end{align}
\end{theorem}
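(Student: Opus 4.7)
The plan is to derive the local limit theorem~\eqref{eq:densellt} from an exchangeable iid representation for $P_n$, a Gnedenko-type local limit theorem for sums in the domain of attraction of an $\alpha$-stable law, and the randomly-stopped-sum asymptotics of~\cite{zbMATH07179510}. The central limit statement~\eqref{eq:denseclt} then follows from~\eqref{eq:densellt} by integrating against continuous compactly supported test functions.

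First I would exploit the tilting invariance from Section~\ref{sec:gibbs} to replace $\bm w$ by $(w_k \rho_u^k)_{k \ge 0}$, so that effectively $\rho_u = 1$ and $W(1) = \rho_v$. In case~i) this is immediate, and in case~ii) $\rho_u \in (0,\rho_w)$ is the unique solution of $W(\rho_u) = \rho_v$. Introduce two independent random variables: $R$ with $\Pr{R = r} = v_r W(1)^r/V(W(1))$ for $r \ge 1$ (a genuine distribution because $b>1$ makes $V(W(1)) < \infty$), and an iid family $(X_j)_{j \ge 1}$ with $\Pr{X_j = k} = w_k/W(1)$ for $k \ge 0$; set $S_\ell = X_1 + \cdots + X_\ell$. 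From the explicit formula for $\Pr{(\#_i P_n)_{i\ge 0} = (n_i)_{i\ge 0}}$ and the coefficient identity $[z^n]W(z)^\ell = W(1)^\ell\,\Pr{S_\ell = n}$, one obtains
\begin{equation*}
\Pr{N_n = \ell} \;=\; \frac{\Pr{R = \ell}\,\Pr{S_\ell = n}}{\Pr{S_R = n}}.
\end{equation*}
Under the hypotheses, $\Ex{X} = \mu$, and $X$ lies in the domain of attraction of a spectrally positive $\alpha$-stable law: in case~i) the regular variation of $w_k$ with index $-a$ gives $\alpha = \min(a-1,2)$, while in case~ii) the exponential tail of $X$ forces $\alpha=2$. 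The lattice span equals one thanks to the $\gcd$ assumption in case~ii) and automatically by regular variation in case~i).

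Next, Gnedenko's local limit theorem delivers a slowly varying $L$ and a constant $\gamma>0$ (those specified in \eqref{eq:defofl1} and \eqref{eq:defofgamma}) with
\begin{equation*}
\sup_{k\in \ndZ}\Bigl| L(\ell)\,\ell^{1/\alpha}\,\Pr{S_\ell = k} - g\bigl(\tfrac{k-\ell\mu}{L(\ell)\,\ell^{1/\alpha}}\bigr)\Bigr| \longrightarrow 0 \qquad (\ell \to \infty),
\end{equation*}
where $g$ is the density of $X_\alpha(\gamma,1,0)$. Since $R$ is long-tailed with $\Pr{R=r}\sim L_v(r)r^{-b}/V(W(1))$ and $X$ has finite mean, I invoke the local asymptotic of~\cite{zbMATH07179510} for randomly stopped sums to get
\begin{equation*}
\Pr{S_R = n} \sim \tfrac{1}{\mu}\,\Pr{R = \lfloor n/\mu\rfloor}.
\end{equation*}
Writing $\ell = n/\mu + y\,L(n)\,n^{1/\alpha}$ with $y$ in a compact set, slow variation gives $\Pr{R=\ell}/\Pr{R=\lfloor n/\mu\rfloor}\to 1$ and $L(\ell)\ell^{1/\alpha}/(L(n)n^{1/\alpha})\to \mu^{-1/\alpha}$. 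Plugging into the Bayes identity and absorbing the sign flip $n-\ell\mu = -\mu(\ell-n/\mu)$ together with the resulting $\mu$-factors into the scale parameter converts the spectrally positive density $g$ into the spectrally negative density $h$ of $X_\alpha(\gamma,-1,0)$, which is~\eqref{eq:densellt} on compact sets of $y$.

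The main obstacle will be promoting this convergence to the uniform statement over $\ell \ge \delta n$. For $\ell=\Theta(n)$ with $|\ell-n/\mu|\gg L(n)n^{1/\alpha}$ one must verify the matching decay $L(n)n^{1/\alpha}\,\Pr{N_n=\ell}\to 0$ against the Pareto tail of~$h$. This requires sharp moderate- and large-deviation bounds on $\Pr{S_\ell=n}$---$\alpha$-stable tails in case~i), Cram\'er-type exponential bounds in case~ii)---uniform in $\ell$, combined with the uniform bound $\Pr{R=\ell}\le C L_v(\ell)\ell^{-b}$. The delicate point is that~\cite{zbMATH07179510} must be applied with enough precision to control $\Pr{S_R=n}$ across the full moderate-deviation window simultaneously with the local CLT for $\Pr{S_\ell=n}$.
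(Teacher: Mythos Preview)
Your overall strategy coincides with the paper's: Kolchin's representation (your $R$ is the paper's $N$), Gnedenko's uniform local limit theorem for $S_\ell$, and Bloznelis' asymptotic $\Pr{S_R=n}\sim\mu^{-1}\Pr{R=\lfloor n/\mu\rfloor}$ for the denominator; the compact-window part of~\eqref{eq:densellt} then falls out exactly as you describe.

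Where you diverge from the paper is in the tail, and there you make the problem harder than it is. You anticipate needing moderate- and large-deviation bounds or Cram\'er-type estimates for $\Pr{S_\ell=n}$, but none of that is required. Gnedenko's theorem already gives
\[
g(\ell)\,\ell^{1/\alpha}\,\Pr{S_\ell=n}=f\!\left(\frac{n-\ell\mu}{g(\ell)\,\ell^{1/\alpha}}\right)+o(1)
\]
\emph{uniformly over all target values}, so for $\ell\ge\delta n$ with $|x|\to\infty$ you only need the right-hand side to tend to $0$, which it does because $f$ is bounded with $f(t)\to 0$ as $|t|\to\infty$. When $\ell=O(n)$, the ratios $\Pr{R=\ell}/\Pr{R=\lfloor n/\mu\rfloor}$ and $L(n)n^{1/\alpha}/(g(\ell)\ell^{1/\alpha})$ stay bounded by the uniform convergence theorem for regularly varying functions, and the product goes to $0$. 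For $\ell=t_n n$ with $t_n\to\infty$ (a range you do not address), the paper simply controls both ratios with Potter bounds, picking up factors $t_n^{\pm\epsilon}$ that are beaten by the $t_n^{-1/\alpha}$ coming from the normaliser. In short, the ``obstacle'' you flag dissolves once you exploit the uniformity already built into the local limit theorem; no separate deviation machinery is needed.
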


We may describe the functions and constants of these statements in more detail. The assumptions of Theorem~\ref{te:dense} allow us to define  a random non-negative integer~$X$ with probability generating function
\begin{align}
	\label{eq:defX}
	\Ex{z^X} = \frac{W(\rho_u z)}{W(\rho_u)},
\end{align}
so that $\mu = \Ex{X}$. For all $x \ge 0$ let $K(x) = \Ex{X^2 \one_{X \le x}}$. Moreover, let
\begin{align}
	\label{eq:defofg}
	g(n) = \begin{cases}
		\sqrt{\frac{\Va{X}}{2}}, \quad &\Va{X}<\infty \\
		\sqrt{K\left(\sup\left\{x \ge 0 \mid \frac{K(x)}{x^2} \ge \frac{1}{n} \right\} \right)}, \quad &\Va{X}=\infty \,\,\mathrm{and}\,\, \alpha= 2 \\
		n^{-1/\alpha}(\Gamma(1- \alpha))^{1/\alpha} \inf\left\{ x \ge 0 \mid \Pr{X > x} \le \frac{1}{n} \right\}, &\Va{X}=\infty \,\,\mathrm{and}\,\, 1<\alpha<2.
	\end{cases}
\end{align}
Then we may set
\begin{align}
	\label{eq:defofl1}
	L(n) = \mu^{-1-1/\alpha} g(n)
\end{align}
and
\begin{align}
	\label{eq:defofgamma}
	\gamma = \left( - \cos \frac{\pi \alpha}{2} \right)^{1/\alpha}.
\end{align}
This way, $X_\alpha(\gamma, -1, 0) \eqdist - X_\alpha(\gamma, 1, 0)$, and  $X_\alpha(\gamma, 1, 0)$ has Laplace transform
\begin{align}
	\Exb{e^{-t X_\alpha(\gamma, 1, 0)}}  = \exp(t^\alpha ), \qquad \mathrm{Re}\, t \ge 0.
\end{align}
For $\alpha=2$, the distribution $S_\alpha(\gamma, -1, 0)$ is  Gaussian  with mean $0$ and variance $2$.  Hence
\begin{align}
	h(x) = \frac{1}{2 \sqrt{\pi}} \exp(- x^2 / 4 ).
\end{align}
For $1 < \alpha < 2$, by~\cite[Thm. 4.1]{2011arXiv1112.0220J}
\begin{align}
	h(x) = \frac{1}{\pi x} \sum_{k=1}^\infty \frac{\Gamma(k/\alpha +1)}{k!} (-x)^k \sin\left(- \frac{k \pi}{\alpha} \right).
\end{align}

In case that the slowly varying function $L_w$ admits a limit the associated sequences $g(n)$ and $L(n)$ may be chosen to be constants:
\begin{proposition}
	Suppose that the limit
	\[
	c_w = \lim_{n \to \infty} L_w(n) 
	\]
	exists and is positive. In this case, the slowly varying function $g(n)$ may be determined explicitly in the infinite variance cases:
	\begin{enumerate}[\qquad a)]
		\item  If $1 < \alpha < 2$, then 
		\[
		\Pr{X \ge x} \sim \frac{c_w}{W(\rho_w)\alpha} x^{-\alpha}
		\]
		as $x \to \infty$, and
		\[
		\lim_{n \to \infty} g(n) = \left( \frac{c_w \Gamma(1- \alpha)}{W(\rho_w) \alpha}  \right)^{1/\alpha}.
		\]
		Hence we may set $L(n)$ to the constant
		\[
		L(n) = \mu^{-1-1/\alpha} \left( \frac{c_w \Gamma(1- \alpha)}{W(\rho_w) \alpha}  \right)^{1/\alpha}.
		\]
		\item If $\alpha= 2$ and $\Va{X} = \infty$, then
		\[
		K(x) \sim \frac{c_w}{W(\rho_w)} \log(x)
		\]
		as $x \to \infty$, and
		\[
		g(n) \sim  \frac{1}{2} \sqrt{\frac{c_w n \log n}{W(\rho_w)}}
		\]
		Hence we may define $L(n)$ to equal
		\[
		L(n) =  \frac{1}{2\mu^{1+1/\alpha}} \sqrt{\frac{c_w n \log n}{W(\rho_w)}}.
		\]
	\end{enumerate}
\end{proposition}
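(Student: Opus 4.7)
The starting observation is that both infinite-variance situations fall under case~(i) of Theorem~\ref{te:dense}: in case~(ii) one has $\rho_u < \rho_w$, so the probability generating function of $X$ has radius of convergence strictly larger than~$1$ and $X$ possesses all moments. In case~(i) we have $\rho_u = \rho_w$ and consequently
\[
\Pr{X = n} = \frac{w_n \rho_u^n}{W(\rho_u)} = \frac{L_w(n)\, n^{-a}}{W(\rho_w)},
\]
which will be the central identity driving both computations. The remainder is a routine asymptotic analysis exploiting $L_w(n) \to c_w > 0$.

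For part~(a) we have $\alpha = a-1 \in (1,2)$, and I would evaluate the tail
\[
\Pr{X \ge x} = \frac{1}{W(\rho_w)} \sum_{n \ge x} L_w(n)\, n^{-a}
\]
by splitting off a finite initial segment (where $L_w$ may deviate from $c_w$) and using $L_w(n) \to c_w$ on the tail, together with the elementary estimate $\sum_{n \ge x} n^{-a} \sim x^{1-a}/(a-1)$. This yields $\Pr{X \ge x} \sim (c_w/(W(\rho_w)\alpha))\, x^{-\alpha}$ as claimed. Inverting the monotone asymptotic gives
\[
\inf\{x \ge 0 \mid \Pr{X > x} \le 1/n\} \sim \left(\frac{c_w\, n}{W(\rho_w)\,\alpha}\right)^{1/\alpha},
\]
and substituting into~\eqref{eq:defofg} and~\eqref{eq:defofl1} yields the announced constant values of $g(n)$ and $L(n)$.

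For part~(b), the combination $\alpha = 2$, $\Va{X} = \infty$, and $L_w(n) \to c_w > 0$ forces $a = 3$, since $\Ex{X^2} = W(\rho_w)^{-1}\sum_n L_w(n) n^{2-a}$ must diverge while $\min(a-1,2)=2$, and this is only compatible with the borderline case $a=3$ under $L_w(n) \to c_w$. I would then use the Abelian statement
\[
K(x) = \frac{1}{W(\rho_w)} \sum_{n \le x} \frac{L_w(n)}{n} \sim \frac{c_w}{W(\rho_w)} \log x,
\]
which follows from the asymptotics of the harmonic sum together with $L_w(n) \to c_w$. The quantity $x_n := \sup\{x \ge 0 \mid K(x)/x^2 \ge 1/n\}$ satisfies the implicit relation $K(x_n) = x_n^2/n$; combined with the logarithmic asymptotics this gives $x_n^2 \sim (c_w n /W(\rho_w))\, \log x_n$, and iterating the logarithm ($\log x_n \sim \tfrac{1}{2}\log n$) determines the growth of $x_n$. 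Plugging back into $g(n) = \sqrt{K(x_n)}$ and $L(n) = \mu^{-1-1/\alpha} g(n)$ finishes the argument.

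There is no genuine obstacle: the only point requiring attention is the uniform control of the contribution from the initial segment of terms, where $L_w(n)$ may deviate substantially from $c_w$. In both parts this is absorbed by a standard $\varepsilon$-cut-off, bounding the head contribution by a constant that is negligible compared to the $x^{1-a}$ tail (part~a) and the logarithmic growth (part~b), respectively.
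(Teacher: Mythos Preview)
The paper states this proposition without proof, treating it as a routine consequence of the definitions in~\eqref{eq:defofg} and~\eqref{eq:defofl1}. Your outline is exactly the natural computation and is correct in all essentials: in case~(i) of Theorem~\ref{te:dense} one has $\rho_u=\rho_w$, giving $\Pr{X=n}=L_w(n)n^{-a}/W(\rho_w)$, and the rest is elementary asymptotic analysis under $L_w(n)\to c_w$.

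One remark on part~(b): if you carry your method through, you obtain $K(x_n)\sim\frac{c_w}{W(\rho_w)}\log x_n\sim\frac{c_w}{2W(\rho_w)}\log n$ and hence
\[
g(n)=\sqrt{K(x_n)}\sim\sqrt{\frac{c_w\log n}{2W(\rho_w)}},
\]
which is slowly varying as it must be (the scaling in Theorem~\ref{te:dense} is $g(n)n^{1/\alpha}$ with $g$ slowly varying). The displayed formula in the proposition carries an extra factor $\sqrt{n}$ and has $\tfrac12$ in place of $\tfrac{1}{\sqrt{2}}$; this appears to be a typo in the paper, not a flaw in your argument.
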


Apart from the asymptotic behaviour of the number of components, we may also describe their sizes via a functional limit theorem. To this end we let $\mathbb{D}([0,1], \ndR)$ denote the set of càdlàg functions on the unit interval, equipped with the Skorokhod $J_1$-topology. Let $(Y_s)_{s \ge 0}$ denote the spectrally positive L\'evy process with Laplace exponent
\begin{align}
	\Exb{e^{-t Y_s }} = \exp(s t^\alpha).
\end{align}

\begin{theorem}
	\label{te:functional}
	Under the assumptions of Theorem~\ref{te:dense} we have
	\begin{align}
		\left( \frac{\sum_{i=1}^{\lfloor s N_n \rfloor}K_i - N_n s \mu }{L(n)n^{1/\alpha}}, \,\, 0 \le s \le 1 \right) \convdis (\mu Y_s, \,\, 0 \le s \le 1)
	\end{align}
	in $\mathbb{D}([0,1], \ndR)$ as $n \to \infty$.
\end{theorem}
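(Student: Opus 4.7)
The plan is to reduce the functional convergence to a Skorokhod invariance principle for iid partial sums via Lemma~\ref{le:independent}, and to handle the endpoint $s = 1$ using the deterministic identity $\sum_{i=1}^{N_n} K_i = n$ together with exchangeability of the component sizes. First I would note that up to a uniform $O(1/(L(n)n^{1/\alpha})) = o(1)$ error coming from the fractional part of $sN_n$, it suffices to prove
\[
\hat F_n(s) := \frac{T_{\lfloor sN_n\rfloor} - \lfloor sN_n\rfloor \mu}{L(n) n^{1/\alpha}} \convdis \mu Y_s \quad\text{in}\quad \mathbb{D}([0,1], \ndR),
\]
with $T_k := \sum_{i=1}^k K_i$. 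Choose a sequence $s_n$ with $s_n/(L(n)n^{1/\alpha}) \to \infty$ and $s_n = o(n)$, set $m_n := \lfloor n/\mu - s_n\rfloor$, and apply Lemma~\ref{le:independent} to obtain a coupling under which $K_i = X_i$ for $i \le m_n$ on an event $\mathcal E_n$ with $\Pr{\mathcal E_n} \to 1$, where $(X_i)_{i\ge 1}$ is iid with law~\eqref{eq:defX}. For any fixed $\eta \in (0,1)$ the CLT~\eqref{eq:denseclt} gives $\lfloor sN_n\rfloor \le m_n$ uniformly in $s \in [0, 1-\eta]$ with probability tending to~$1$; on this good event
\[
\hat F_n(s) = \frac{S_{\lfloor sN_n\rfloor} - \lfloor sN_n\rfloor \mu}{L(n)n^{1/\alpha}}, \qquad s \in [0, 1-\eta], \quad S_k := X_1 + \cdots + X_k.
\]

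Next I would appeal to Skorokhod's functional invariance principle for iid variables in the domain of attraction of a stable law. The tail assumptions on~$v_n$ and $w_n$ translate through~\eqref{eq:defX} into the statement that $X$ lies in the domain of attraction of the spectrally positive $\alpha$-stable law $X_\alpha(\gamma, 1, 0) \eqdist Y_1$, with normalizing sequence $a_k$ satisfying $a_{n/\mu} = \mu L(n) n^{1/\alpha}$ by~\eqref{eq:defofg} and~\eqref{eq:defofl1}. The principle therefore yields
\[
U_n(t) := \frac{S_{\lfloor tn/\mu\rfloor} - \lfloor tn/\mu\rfloor \mu}{L(n)n^{1/\alpha}} \convdis \mu Y_t \quad\text{in}\quad \mathbb{D}([0,T], \ndR)
\]
for every $T > 0$. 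Setting $\theta_n(s) := \lfloor sN_n\rfloor/(n/\mu)$, we have $\hat F_n = U_n \circ \theta_n$ on the good event, while~\eqref{eq:denseclt} implies $\|\theta_n - \mathrm{id}\|_\infty = O_p(L(n)n^{1/\alpha-1}) = o_p(1)$ on $[0, 1-\eta]$. I would then introduce the piecewise linear continuous interpolation $\tilde\theta_n$ that sends the jump times $k/N_n$ of $\hat F_n$ to the jump times $k/(n/\mu)$ of $U_n$; this is a strictly increasing homeomorphism satisfying $U_n \circ \tilde\theta_n = \hat F_n$ on the good event and $\|\tilde\theta_n - \mathrm{id}\|_\infty = o_p(1)$. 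The standard $J_1$ reparametrization bound yields $d_{J_1}(\hat F_n, U_n) \to 0$ in probability, so $\hat F_n \convdis \mu Y$ in $\mathbb{D}([0,1-\eta], \ndR)$ for every $\eta \in (0,1)$.

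For the endpoint, the identity $T_{N_n} = n$ gives $\hat F_n(1) = -\mu(N_n - n/\mu)/(L(n)n^{1/\alpha})$, which converges in distribution to $\mu Y_1$ by~\eqref{eq:denseclt} and $-X_\alpha(\gamma,-1,0) \eqdist X_\alpha(\gamma, 1, 0) \eqdist Y_1$. To glue the bulk with the endpoint I would exploit the exchangeability of $(K_1, \ldots, K_{N_n})$ conditional on $N_n$: the reversal $i \leftrightarrow N_n + 1 - i$ produces the distributional path identity
\[
\bigl(\hat F_n(s)\bigr)_{s \in [0,1]} \eqdist \bigl(\hat F_n(1) - \hat F_n(1 - s)\bigr)_{s \in [0,1]}
\]
up to a uniform $o_p(1)$ from index discretisation, mirroring the Lévy time-reversal identity $(Y_s)_{s\in[0,1]} \eqdist (Y_1 - Y_{(1-s)-})_{s\in[0,1]}$. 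This transfers the $J_1$ control of $\hat F_n$ near $s = 0$ established above into the oscillation bound $\lim_{\eta \downarrow 0}\limsup_n \Prb{\sup_{s \in [1-\eta, 1]}|\hat F_n(s) - \hat F_n(1)| > \varepsilon} = 0$ for every $\varepsilon > 0$, which together with the bulk convergence yields $J_1$ tightness on $[0,1]$ and identifies the limit. The principal technical obstacle is the handling of the random time change when $Y$ has jumps ($1 < \alpha < 2$); the interpolation $\tilde\theta_n$ sidesteps the need for a general $J_1$-composition theorem by matching jump times exactly on the coupled event, and the exchangeability-based symmetry provides the analogous control near $s = 1$ needed to glue the two regimes together.
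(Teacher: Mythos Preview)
Your proposal is correct and follows essentially the same architecture as the paper's proof: use Lemma~\ref{le:independent} to couple the first $\lfloor n/\mu\rfloor - s_n$ component sizes to i.i.d.\ copies of $X$, invoke the Skorokhod invariance principle for the i.i.d.\ walk, perform a random time change from the $\lfloor tn/\mu\rfloor$-parametrisation to the $\lfloor sN_n\rfloor$-parametrisation, and close up the interval near $s=1$ by exchangeability/time-reversal. The paper's version is terser---it cites \cite[Lem.~5.7]{MR2946438} for the time change and dispatches the endpoint with a one-line ``by a time-reversal argument''---whereas you spell out an explicit jump-matching homeomorphism $\tilde\theta_n$ and the oscillation bound near $s=1$, but the underlying ideas coincide.
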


Such a behaviour has been proven by~\cite[Thm. 3]{MR3335012} for the sizes of the fringe subtrees attached to a vertex with macroscopic degree in non-generic random trees, enabling  combinatorial applications to planar maps~\cite{AddBe} and planar graphs~\cite{zbMATH07235577}. The approach in~\cite[Thm. 3]{MR3335012} makes use of an independence result of a linear fraction of these fringe subtrees, which is  deduced by applying the independence of the small jumps~\cite{MR2775110} in a random walk setting with a unique giant jump. The approach for Theorem~\ref{te:functional} also builds on an independence result for component sizes, see Lemma~\ref{le:independent} below,  but this main lemma is proved differently, using random walks without giant jumps instead.

Theorem~\ref{te:functional} implies, for example, that the rescaled size of the  largest component of the Gibbs partition $P_n$ converges to the size of the largest jump of $(\mu Y_s, \,\, 0 \le s \le 1)$.  Hence we readily obtain:

\begin{corollary}
	\label{co:jump}
	Let $K_{(1)} \ge K_{(2)} \ge \ldots$ denote the component sizes $K_1, \ldots, K_{N_n}$ arranged in decreasing order.
	Under the assumptions of Theorem~\ref{te:dense} we have
	\begin{align}
		\label{eq:jumpsize}
		\frac{1}{L(n)n^{1/\alpha}} K_{(1)} \convdis \begin{cases}W_1, &1<\alpha<2 \\ 0, &\alpha=2 \end{cases}
	\end{align}
	for a Fr\'{e}chet-distributed random variable $W_1 \ge 0$ with cumulative distribution function
	\[
	\Pr{W_1 \le x} = \exp\left(\frac{\mu^\alpha}{\Gamma(1 - \alpha)} x^{-\alpha} \right)
	\]
	for all $x > 0$. Moreover, if $1<\alpha<2$, then for each integer $j \ge 2$
	\[
	\frac{1}{L(n)n^{1/\alpha}} K_{(j)} \convdis W_j
	\]
	for a random variable $W_j$ with density function
	\[
	\alpha \frac{\mu^\alpha}{\Gamma(1 - \alpha)} x^{-\alpha-1} \frac{ \left( \frac{\mu^\alpha}{\Gamma(1 - \alpha)} x^{-\alpha} \right)^{j-1}  }{(j-1)!} \exp\left(- \frac{\mu^\alpha}{\Gamma(1 - \alpha)} x^{-\alpha}\right).
	\]
\end{corollary}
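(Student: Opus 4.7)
The plan is to deduce both parts of Corollary~\ref{co:jump} from the functional convergence of Theorem~\ref{te:functional} via the continuous mapping theorem applied to the $j$-th-largest-positive-jump functional $\Phi_j \colon \mathbb{D}([0,1], \ndR) \to [0, \infty]$. First I would observe that the centering term $-N_n s \mu$ is affine in $s$, so the jumps of the normalised process $\tilde S_n(s) = (\sum_{i=1}^{\lfloor s N_n \rfloor} K_i - N_n s \mu)/(L(n) n^{1/\alpha})$ coincide in location and size with those of the uncentered rescaled partial-sum process; they are the values $K_i/(L(n) n^{1/\alpha})$ occurring at times $i/N_n$, whose decreasing rearrangement is $K_{(j)}/(L(n) n^{1/\alpha})$. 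Joint continuous mapping of $(\Phi_1, \ldots, \Phi_J)$ for arbitrary $J \ge 1$ then gives, assuming $\Phi_j$ is continuous almost everywhere under the law of $\mu Y$,
\[
\left( K_{(j)}/(L(n) n^{1/\alpha}) \right)_{j \ge 1} \convdis (\Phi_j(\mu Y))_{j \ge 1} =: (W_j)_{j \ge 1}.
\]

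For $1 < \alpha < 2$, the limit $(\mu Y_s)_{0 \le s \le 1}$ is a spectrally positive $\alpha$-stable L\'evy process, whose jumps form a Poisson point process on $[0,1] \times (0, \infty)$ with intensity $\mathrm{d}s \otimes \nu_{\mu Y}(\mathrm{d}x)$ for an absolutely continuous L\'evy measure. The L\'evy measure is read off by comparing $\Ex{e^{-t \mu Y_s}} = \exp(s \mu^\alpha t^\alpha)$ with the canonical Laplace exponent $\int_0^\infty (e^{-tx} - 1 + tx)\, \nu_{\mu Y}(\mathrm{d}x)$ and using the standard identity $\int_0^\infty (e^{-u} - 1 + u) u^{-\alpha-1}\, \mathrm{d}u = \Gamma(-\alpha) = -\Gamma(1-\alpha)/\alpha$; this yields $\nu_{\mu Y}(\mathrm{d}x) = -\alpha \mu^\alpha \Gamma(1-\alpha)^{-1} x^{-\alpha-1}\, \mathrm{d}x$ on $(0,\infty)$, positive since $\Gamma(1-\alpha) < 0$, with tail $\lambda(x) := \nu_{\mu Y}((x, \infty)) = -\mu^\alpha x^{-\alpha}/\Gamma(1-\alpha)$. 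Standard formulas for ordered atoms of a Poisson point process then give
\[
\Pr{W_1 \le x} = e^{-\lambda(x)} = \exp\left(\frac{\mu^\alpha}{\Gamma(1-\alpha)} x^{-\alpha}\right),
\]
matching the Fr\'echet law in the statement, and differentiating $\Pr{W_j \le x} = \sum_{k=0}^{j-1} \lambda(x)^k e^{-\lambda(x)}/k!$ telescopes to the density of $W_j$ stated in the corollary.

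For $\alpha = 2$, the identity $\Ex{e^{-t \mu Y_s}} = \exp(s \mu^2 t^2)$ shows that $\mu Y$ is $\sqrt{2} \mu$ times a standard Brownian motion, and in particular has continuous paths almost surely. Convergence in $\mathbb{D}([0,1], \ndR)$ to a continuous limit is equivalent to uniform convergence, so the largest jump of $\tilde S_n$ converges to $0$ in probability; this gives $K_{(1)}/(L(n) n^{1/\alpha}) \convp 0$ and hence also convergence in distribution to the constant~$0$.

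The main obstacle is verifying that $\Phi_j$ is continuous at almost every realisation of $\mu Y$ under the $J_1$-topology. The standard remedy is a two-level argument: for each fixed $\delta > 0$, the map from $\mathbb{D}([0,1], \ndR)$ into the decreasing arrangement of jumps of size greater than $\delta$ is $J_1$-continuous at c\`adl\`ag paths with no jump of size exactly $\delta$ and no jumps at $0$ or $1$, and the exceptional $\delta$'s together with jump times at the boundary form an at-most-countable random set that is almost surely avoided by a fixed deterministic sequence $\delta_k \downarrow 0$. Passing to this limit recovers $\Phi_j$ and justifies the continuous-mapping conclusion.
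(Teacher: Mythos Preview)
Your approach is correct and coincides with the paper's: the paper does not give an explicit proof of Corollary~\ref{co:jump} but simply states that it follows from Theorem~\ref{te:functional} because the rescaled largest component converges to the largest jump of $(\mu Y_s)_{0\le s\le 1}$, and that the $\alpha=2$ limit is degenerate since the limiting process is continuous. You have filled in the details the paper leaves implicit --- the $J_1$-continuity of the ordered-jump functionals, the identification of the L\'evy measure of $\mu Y$, and the Poisson order-statistics computation --- all of which are standard and correctly sketched. The paper also remarks that the same conclusions for $1<\alpha<2$ can alternatively be obtained from Corollary~\ref{co:maxcomp} via the i.i.d.\ approximation of Lemma~\ref{le:independent}, but your route through the functional limit theorem is the one the paper presents as primary.
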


The limit in~\eqref{eq:jumpsize} is degenerate for $\alpha=2$, since the limiting process in Theorem~\ref{te:functional} is continuous in this case. We may nevertheless provide more accurate asymptotics. To this end, let $(X_i)_{i \ge 1}$ denote independent copies of the random variable $X$ described in Equation~\eqref{eq:defX}. 
The following lemma  is used in the proof of Theorem~\ref{te:functional}, but also provides additional information on the component sizes. It ensures that a large part of the components of $P_n$ become independent from each other.

\begin{lemma}
	\label{le:independent}
	Under the assumptions of Theorem~\ref{te:dense} there exists a sequence $(s_n)_{n \ge 1}$ of positive integers with 
	$s_n  = o(n)$
	(and necessarily $\frac{s_n}{L(n)n^{1/\alpha}} \to \infty$) such that
	\begin{align}
		\label{eq:strivial}
		\lim_{n \to \infty}	d_{\mathrm{TV}}\left( (K_1, \ldots, K_{\min(N_n, \lfloor n / \mu\rfloor - s_n)}), (X_1, \ldots, X_{\lfloor n/ \mu\rfloor - s_n} ) \right)  = 0.
	\end{align}
	In specific cases asymptotic upper bounds for $s_n$ may be made explicit:
	\begin{enumerate}[\qquad a)]
		\item 
		If $\Ex{X^3}<\infty$, or if $\Ex{X^2}< \infty$ and $\Ex{X^2 \one_{X \ge n}} = O(n^{-1/3})$, then~\eqref{eq:strivial} holds for any sequence $s_n = o(n)$ satisfying $s_n / n^{3/4} \to \infty$.
		\item If $\Ex{X^2}< \infty$ and $\Ex{X^2 \one_{X \ge n}} = O(n^{-r})$ for some $0<r<1/3$, then ~\eqref{eq:strivial} holds for any sequence $s_n = o(n)$ satisfying $s_n / n^{\frac{1}{1+r}} \to \infty$.
	\end{enumerate}
\end{lemma}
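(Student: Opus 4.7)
The plan is to rewrite the total variation distance as an $L^1$-distance for an explicit density ratio, and then to control that ratio via asymptotic expansions of Gibbs partition functions.

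\textbf{Reformulation.} Conditionally on $N_n = k$, the tuple $(K_1,\dots,K_k)$ is distributed as $(X_1,\dots,X_k)$ conditioned on $\sum_i X_i = n$. Summing over $k \ge m$ (with $m = \lfloor n/\mu - s_n\rfloor$), using $[z^N]W(z)^k = W(\rho_u)^k \rho_u^{-N}\Pr{S_k = N}$ and setting $V^{(m)}(y) = \sum_{j\ge 0} v_{j+m} y^j$, a direct computation will yield
\[
\Pr{K_1 = k_1, \dots, K_m = k_m, N_n \ge m} = \prod_{i=1}^m \Pr{X = k_i}\cdot \rho(\sigma_m),
\]
where $\sigma_m = k_1 + \cdots + k_m$, $S_m = X_1 + \cdots + X_m$, and
\[
\rho(\sigma) = \frac{W(\rho_u)^m \rho_u^{-\sigma} u_{n-\sigma}^{(m)}}{u_n}, \qquad u_N^{(m)} := [z^N] V^{(m)}(W(z)).
\]
Splitting the TV distance according to whether $\min(N_n,m) = m$ then produces the clean identity
\[
2\, d_{\mathrm{TV}}\bigl((K_1,\dots,K_{\min(N_n,m)}),(X_1,\dots,X_m)\bigr) = \Exb{|\rho(S_m) - 1|} + \Pr{N_n < m}.
\]
The second term tends to $0$ by Theorem~\ref{te:dense} together with $s_n / (L(n)n^{1/\alpha}) \to \infty$, so the entire problem reduces to showing $\Exb{|\rho(S_m) - 1|} \to 0$.

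\textbf{Reduction to convergence in probability.} Summing $\rho(\sigma)$ against the law of $S_m$ collapses via the identity $W(z)^m V^{(m)}(W(z)) = V(W(z)) - \sum_{k<m} v_k W(z)^k$ to $\Ex{\rho(S_m)} = \Pr{N_n \ge m} \to 1$. Since $\rho \ge 0$, combining the algebraic identity $\Ex{|\rho - 1|} = 2\Ex{(1-\rho)_+} + (\Ex{\rho} - 1)$ with the bound $(1-\rho)_+ \le 1$ and dominated convergence reduces matters to showing $\rho(S_m) \to 1$ in probability.

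\textbf{Asymptotic expansions.} To obtain this convergence in probability I would derive two uniform asymptotics. First, applying the local limit theorem~\eqref{eq:densellt} at $\ell = \lfloor n/\mu\rfloor$ and inverting the relation $\Pr{N_n = \ell} = v_\ell W(\rho_u)^\ell \Pr{S_\ell = n}/(\rho_u^n u_n)$ with the stable local limit theorem for $S_\ell$ yields
\[
u_n \sim C\, L_v(n)\, n^{-b}\, \rho_u^{-n}
\]
for an explicit constant $C$. Second, in the typical range of $S_m$ one has $N := n - \sigma$ of order $s_n \mu$ while $m$ is of order $n/\mu$, so $N \ll m$. In this unbalanced regime I would start from
\[
u_N^{(m)}\rho_u^N = \sum_{j \ge 0} v_{j+m} W(\rho_u)^j \Pr{S_j = N},
\]
use the uniform asymptotic $v_{j+m} W(\rho_u)^j \sim L_v(m) m^{-b} W(\rho_u)^{-m}$ valid for $j = o(m)$ (slow variation of $L_v$), and combine with the renewal-type identity $\sum_{j\ge 0}\Pr{S_j = N} \to 1/\mu$ and stable LLT control on $S_j$ to obtain
\[
u_N^{(m)} \sim \frac{L_v(m)\, m^{-b}}{\mu\, W(\rho_u)^m}\, \rho_u^{-N}.
\]
Substituting both expansions into the formula for $\rho$, the factors $L_v$, $n^{-b}$ and $\rho_u^{-n}$ cancel (using $L_v(m) \sim L_v(n)$ and $(n/m)^b \to \mu^b$), giving $\rho(\sigma) \to 1$ uniformly for $|\sigma - m\mu|$ of order $c_n$ with any $L(n)n^{1/\alpha} \ll c_n \ll s_n$. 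The stable LLT for $S_m$ makes $\Pr{|S_m - m\mu| > c_n} \to 0$, yielding the required convergence in probability.

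\textbf{Main obstacle and quantitative refinements.} The decisive step is the asymptotic of $u_N^{(m)}$ in the unbalanced regime $N \ll m$, which is invisible to standard singularity analysis (for fixed $m$ the latter would give the incompatible $u_N^{(m)} \sim C L_v(N) N^{-b}\rho_u^{-N}$). The crucial input here is the local limit theorem for randomly stopped sums from~\cite{zbMATH07179510}, which supplies precisely the uniform estimates needed. For the quantitative statements~(a) and~(b) I would use quantitative versions of these LLTs: a third-moment assumption gives Berry--Esseen-type error $O(k^{-1})$, which after substituting $k \asymp n$ and balancing against the CLT spread of $S_m$ (of order $\sqrt{n}$) produces the threshold $s_n \gg n^{3/4}$, while a tail decay $\Ex{X^2 \one_{X \ge n}} = O(n^{-r})$ produces LLT error $O(n^{-r})$ and, by the same balancing, the threshold $s_n \gg n^{1/(1+r)}$.
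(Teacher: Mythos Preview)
Your approach is essentially the same as the paper's, just packaged differently. The paper computes the density ratio directly on the event $\{|N_n - n/\mu| \le t_n\}$ and shows it tends to $1$ uniformly on the typical set of $(k_1,\dots,k_{\delta_n})$; your $L^1$ identity $2\,d_{\mathrm{TV}} = \Ex{|\rho(S_m)-1|} + \Pr{N_n<m}$ together with the reduction to $\rho(S_m)\convp 1$ via $\Ex{\rho(S_m)}=\Pr{N_n\ge m}\to 1$ is a cleaner wrapping of the same computation. In both cases the technical core is identical: your sum $u_N^{(m)}\rho_u^N W(\rho_u)^m = \sum_{j\ge 0} L_v(j+m)(j+m)^{-b}\Pr{S_j=N}$ is analyzed by showing the mass concentrates on $j$ near $N/\mu$ and invoking the renewal-type estimate $\sum_{|j\mu-N|\le t_N}\Pr{S_j=N}\to \mu^{-1}$, which is exactly Proposition~\ref{pro:blole2} (Lemma~2 of \cite{zbMATH07179510}) in the paper.

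Your quantitative heuristics for (a) and (b) are off, however. Under $\Ex{X^3}<\infty$ the local limit theorem error is $\tau_n^*=O(n^{-1/2})$, not $O(n^{-1})$; and the thresholds $n^{3/4}$ and $n^{1/(1+r)}$ do not come from a single balancing of an LLT error against the CLT spread. In the paper's formulation one has to choose an auxiliary window $t_n = x_n\, g(s_n)s_n^{1/\alpha}$ and simultaneously satisfy three constraints (the analogue of \eqref{eq:summarized}): $x_n\,g(s_n)s_n^{1/\alpha}/(g(n)n^{1/\alpha})\to\infty$, $x_n(g(s_n)s_n^{1/\alpha-1})^{1/3}\to 0$, and $x_n\,\tau^*_{s_n/2}\to 0$. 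With $\alpha=2$ and $g$ constant, the first two conditions alone (independent of the LLT rate) force $s_n \gg n^{3/4}$; the third is then automatic once $\tau^*_{s_n}=O(s_n^{-1/2})$ or $O(s_n^{-r/2})$. So your identification of the relevant input (rates in the LLT from \cite{zbMATH03504209,MR0322926}) is correct, but the mechanism producing the exponents is the three-way balancing from Proposition~\ref{pro:blole2}, not the two-way one you sketch.
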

The conditions in cases a) and b) are related to rates of convergence in local limit theorems~\cite{MR0322926,zbMATH03504209,zbMATH06778287}. Lemma~\ref{le:independent} implies that any functional $F(K_1, \ldots, K_{N_n})$ that typically does not get perturbed by the last $o(n)$ coordinates behaves in the same way as the functional $F(X_1, \ldots, X_{\lfloor n / \mu \rfloor})$. This has far reaching consequences, as it allows us to transfer a multitude of asymptotics for i.i.d. variates to the Gibbs partition model with little effort. 

As an example, we describe applications to order statistics.

\begin{corollary}
	\label{co:maxcomp}
	Let $X_{(1)} \ge X_{(2)} \ge  \ldots$ denote the random variables $X_1, \ldots, X_{\lfloor n / \mu \rfloor}$  arranged in decreasing order. Suppose that one of the following two conditions are met:
	\begin{enumerate}[i)]
		\item Assumption i) of Theorem~\ref{te:dense} is satisfied.
		\item Assumption ii) of Theorem~\ref{te:dense} is satisfied, and there exists $A>0$ such that for each $\epsilon>0$ there exists a sequence $(\kappa(n))_{n \ge 1}$ with $\epsilon/ A \le \Pr{X\ge \kappa(n)} n \le \epsilon A$ for large enough $n$. For example, this is always the case when $\lim_{n \to \infty} \frac{w_{n+1}}{w_n} \in ]0, \infty[$.
	\end{enumerate}
	Then  there exists a sequence $t_n \to \infty$ such that
	\begin{align}
		\label{eq:megatv}
		\lim_{n \to \infty} d_{\mathrm{TV}}\left( (K_{(1)}, \ldots, K_{(\min(N_n,t_n))}), (X_{(1)}, \ldots, X_{(t_n)})  \right) = 0.
	\end{align}
	In specific cases asymptotic lower bounds for $t_n$ may be made explicit:
	\begin{enumerate}[\qquad a)]
		\item 
		If $\Ex{X^3}<\infty$, or if $\Ex{X^2}< \infty$ and $\Ex{X^2 \one_{X \ge n}} = O(n^{-1/3})$, then~\eqref{eq:megatv} holds for any sequence $t_n$ satisfying $t_n = o(\frac{n^{1/4}}{\log n})$.
		\item If $\Ex{X^2}< \infty$ and $\Ex{X^2 \one_{X \ge n}} = O(n^{-r})$ for some $0<r<1/3$, then ~\eqref{eq:megatv} holds for any sequence $t_n$ satisfying $t_n = o(\frac{n^{\frac{r}{1+r}}}{\log n})$.
	\end{enumerate}
\end{corollary}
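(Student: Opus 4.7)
The plan is to combine Lemma~\ref{le:independent} with an exchangeability argument that transfers the total variation bound on an initial segment of $(K_1, \ldots, K_{N_n})$ to a bound on the top $t_n$ order statistics. Let $(s_n)_{n \ge 1}$ be the sequence supplied by Lemma~\ref{le:independent}, so $s_n = o(n)$ and $L(n) n^{1/\alpha} = o(s_n)$. Set $\cG_n := \{|N_n - \lfloor n/\mu\rfloor| \le s_n\}$; by Theorem~\ref{te:dense} one has $\Pr{\cG_n} \to 1$, and on $\cG_n$ the initial segment $(K_1, \ldots, K_{\lfloor n/\mu\rfloor - s_n})$ is $d_{\mathrm{TV}}$-close to the i.i.d.\ tuple $(X_1, \ldots, X_{\lfloor n/\mu\rfloor - s_n})$ by Lemma~\ref{le:independent}.

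I would then choose $t_n \to \infty$ slowly so that $t_n s_n = o(n)$. Conditional on $\cG_n \cap \{N_n = m\}$, exchangeability of $(K_1, \ldots, K_m)$ implies that each position has probability $t_n/m$ of contributing to the top $t_n$ values (with ties broken by smallest position); a union bound over $i > \lfloor n/\mu\rfloor - s_n$ shows that the top $t_n$ of $(K_1, \ldots, K_{N_n})$ and of $(K_1, \ldots, K_{\lfloor n/\mu\rfloor - s_n})$ agree apart from an event of probability $O(t_n s_n/n) = o(1)$. The same argument applied to the genuinely i.i.d.\ family $(X_i)$ yields that the top $t_n$ of $(X_1, \ldots, X_{\lfloor n/\mu\rfloor})$ and of $(X_1, \ldots, X_{\lfloor n/\mu\rfloor - s_n})$ agree with probability $1 - O(t_n s_n/n)$. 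Combining these two estimates with Lemma~\ref{le:independent} through the triangle inequality for $d_{\mathrm{TV}}$ gives \eqref{eq:megatv}.

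For the explicit rates in (a) and (b), I would take $s_n = n^{3/4} \log n$, respectively $s_n = n^{1/(1+r)} \log n$, which meet the threshold condition from the corresponding case of Lemma~\ref{le:independent}; the admissible $t_n = o(n/s_n)$ then reproduces the claimed $o(n^{1/4}/\log n)$ and $o(n^{r/(1+r)}/\log n)$. The tail-regularity hypothesis in case (ii) of the corollary is invoked to prevent degeneration of the ordered top-$t_n$ tuple in the light-tailed regime: without it, the i.i.d.\ tuple could concentrate almost all of its top-$t_n$ mass at a single atom where the exchangeability-based exchange of positions does not yield a meaningful total variation approximation. The main obstacle I anticipate is keeping the tiebreaking consistent across the three approximations when $X$ is integer-valued; fixing the rule ``earlier position wins'' throughout, and noting that the union bounds are insensitive to this choice, should resolve it.
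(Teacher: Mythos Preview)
Your argument is correct and in fact takes a cleaner route than the paper's. Both proofs start from Lemma~\ref{le:independent} and its companion $s_n$, but then diverge. The paper fixes a numerical threshold $\kappa(s_n)$ and separately shows (i) that the maximum over the last $O(s_n)$ coordinates stays below $\kappa(s_n)$ and (ii) that the $t_n$-th largest among the first $\lfloor n/\mu\rfloor - s_n$ coordinates stays above it; the extra tail-regularity hypothesis in case~(ii) of the corollary is needed precisely to manufacture such a $\kappa(s_n)$. Your exchangeability argument bypasses the threshold entirely: conditional on $N_n=m$ the vector $(K_1,\dots,K_m)$ is exchangeable, so with random tie-breaking each position lands in the top~$t_n$ with probability exactly $t_n/m$, and a union bound over the $\le 2s_n$ ``tail'' positions gives $O(t_n s_n/n)$. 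The same computation on the i.i.d.\ side and the triangle inequality finish the proof, and $t_n=o(n/s_n)$ recovers the stated rates after the choices $s_n=n^{3/4}\log n$ and $s_n=n^{1/(1+r)}\log n$.

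Two small comments. First, ``earlier position wins'' as a tie-breaking rule does not give probability exactly $t_n/m$ for each index; random (uniform) tie-breaking does, and that is what makes the union bound clean. This is easy to fix and you already flag it. Second, your rationale for the tail hypothesis in case~(ii) is not right: in your approach that hypothesis is simply unused. Exchangeability controls the \emph{positions} of the top~$t_n$, and since the ordered tuple $(K_{(1)},\dots,K_{(t_n)})$ is a function of those positions and the values, coincidence of values at an atom causes no difficulty for the total variation bound. In other words, your proof actually establishes the corollary under the bare assumptions of Theorem~\ref{te:dense}, which is slightly stronger than what is stated.
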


Extreme values of i.i.d. variates are a well-understood subject~\cite{zbMATH05242364, zbMATH03820796}, and Corollary~\ref{co:maxcomp} ensures that we may transfer distributional results of all kinds from this setting to the Gibbs partition model. In particular, we may transfer such results from~\cite[Thms. 19.7, 19.16, 19.19, Cor. 19.14]{MR2908619}. To be precise,~\cite{MR2908619} studies a model whose extremal sizes may in specific settings be approximated either in total variation or with respect to the Kolmogorov distances by extremal sizes of i.i.d. variates~\cite[Thm. 19.7]{MR2908619}. But this yields in turn that these results also hold for extremal sizes of i.i.d. variates and by Corollary~\ref{co:maxcomp} hence also for extremal sizes of Gibbs partitions:

\begin{corollary}
	\label{co:profile}
	Under the assumptions of Corollary~\ref{co:maxcomp}, and assuming that $w_n>0$ for sufficiently large $n$, the following statements hold:
	\begin{enumerate}
		\item Suppose that $\Ex{X^2}<\infty$. Let $(\kappa(n))_{n \ge 1}$ denote a sequence such that  $\lambda_0 := \lim_{n \to \infty} n\Pr{X \ge \kappa(n)} \in [0, \infty]$. Then
		\[
		|\{1 \le i \le N_n \mid K_i \ge \kappa(\lfloor n/\mu \rfloor)\}| \convdis \mathrm{Poi}(\lambda_0).
		\]
		Here $\mathrm{Poi}(\lambda_0) \ge 0$ denotes a Poisson random variable with parameter $\lambda_0$, and degenerate cases $\mathrm{Poi}(0) = 0$ and $\mathrm{Poi}(\infty) = \infty$. Furthermore, for any integer $j \ge 1$ we have
		\[
		\lim_{n \to \infty} \sup_{k \ge 0} \left| \Pr{K_{(j)} \le k } - \Pr{ \mathrm{Poi}(\lfloor n/\mu \rfloor\Pr{X>k}) <j  } \right| = 0.
		\]
		\item Suppose that $X$ is light-tailed and let $k(n) = \sup\{ k \ge 1 \mid \Pr{X=k} \ge 1/n \}$. If $\lambda_1 := \lim_{n \to \infty}\frac{\Pr{X = n+1}}{\Pr{X=n}} \in [0,1[$ exists, then  for each integer $j \ge 1$
		\[
		K_{(j)}= k(\lfloor n/\mu \rfloor) + O_p(1).
		\]
		If $\lambda_1 = 0$, then we  even have
		\[
		\lim_{n \to \infty} \Pr{ |K_{(j)} - k(\lfloor n/\mu \rfloor)| \le 1} = 0.
		\]
		If $\lambda_1 > 0$, then
		\begin{align*}
			K_{(1)} &\atv \lfloor G   / \log(1/\lambda_1) + \log_{1/\lambda_1}K(\lfloor n/\mu \rfloor)  \rfloor, \\
			K_{(1)} - k(\lfloor n/\mu \rfloor) &\atv \left\lfloor G  + \log\left(\frac{  \lfloor n/\mu \rfloor \Pr{X=k(\lfloor n/\mu \rfloor)}}{1-\lambda_1}\right)\frac{1}{\log(1/\lambda_1)} \right\rfloor
		\end{align*}
		for a random variable $G$ with Gumbel distribution
		\[
		\Pr{G \le x} = \exp(- \exp(-x)), \qquad x \in \ndR
		\]
		and
		\[
		K(n) = \frac{ n \Pr{X=k(n)} \lambda_1^{-k(n)} }{ 1 - \lambda_1 }.
		\]
	\end{enumerate}
\end{corollary}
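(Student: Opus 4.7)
The plan is to reduce every claim in Corollary~\ref{co:profile} to a statement about order statistics of the i.i.d.\ sequence $(X_i)_{i \ge 1}$ — where classical extreme-value theory applies — and then to transfer it back to the Gibbs partition via the total variation coupling provided by Corollary~\ref{co:maxcomp}. Let $t_n \to \infty$ be the sequence furnished there, so that under an optimal coupling the top order statistics agree, $K_{(j)} = X_{(j)}$ for all $1 \le j \le t_n$, with probability tending to $1$. Every quantity appearing in the corollary depends asymptotically only on these top statistics, which is what makes the reduction work.

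For part 1, I would express $|\{1 \le i \le N_n \mid K_i \ge \kappa_n\}|$ with $\kappa_n := \kappa(\lfloor n/\mu\rfloor)$ as the largest $M$ with $K_{(M)} \ge \kappa_n$, and analogously in the i.i.d.\ setting. In the regime $\lambda_0 < \infty$, the i.i.d.\ count converges to $\mathrm{Poi}(\lambda_0)$ by the classical law of rare events; since this Poisson limit is almost surely finite, the count lies within the top $t_n$ order statistics with probability tending to $1$, and the coupling transfers the Poisson limit to the Gibbs partition. The regime $\lambda_0 = \infty$ is handled by a ``for every fixed $M$'' argument: the events $\{K_{(M)} \ge \kappa_n\}$ and $\{X_{(M)} \ge \kappa_n\}$ are asymptotically equivalent under the coupling (since $M \le t_n$ eventually), while the i.i.d.\ probability tends to $1$. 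For the uniform Kolmogorov estimate on $K_{(j)}$, I would combine $K_{(j)} \atv X_{(j)}$ (valid for any fixed $j$) with the classical uniform Poisson–binomial approximation for the exact i.i.d.\ identity $\Prb{X_{(j)} \le k} = \Prb{\mathrm{Bin}(\lfloor n/\mu\rfloor,\Pr{X>k}) < j}$, which is an instance of \cite[Cor.~19.14]{MR2908619}.

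For part 2 the reduction is cleaner: for any fixed $j$ the coupling yields $K_{(j)} \atv X_{(j)}$, so each claim becomes a statement about the $j$-th order statistic of i.i.d.\ light-tailed variables with tail ratio $\lambda_1$. These are exactly the content of \cite[Thms.~19.16, 19.19]{MR2908619}, whose proofs proceed through a total variation approximation by i.i.d.\ variables (the role played by \cite[Thm.~19.7]{MR2908619}) followed by classical extreme-value asymptotics in the i.i.d.\ setting. Extracting that i.i.d.\ step yields the $O_p(1)$ concentration when $\lambda_1 > 0$, the stronger conclusion $\Pr{|K_{(j)} - k(\lfloor n/\mu\rfloor)| \le 1} \to 0$ when $\lambda_1 = 0$, and the Gumbel-type total variation approximations for $K_{(1)}$.

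The main obstacle is a bookkeeping step rather than a conceptual one: checking that the tail hypotheses on $X$ (existence of a quantile sequence $\kappa(n)$, light-tailedness, existence of $\lambda_1$) follow from the assumed hypotheses on $(w_n)$. By~\eqref{eq:defX} one has $\Pr{X=k} = w_k \rho_u^k/W(\rho_u)$, so the ratio $\Pr{X=k+1}/\Pr{X=k}$ equals $\rho_u \cdot w_{k+1}/w_k$; light-tailedness of $X$ thereby reduces to the corresponding property of $(w_n)$, and the assumption $\lim_{n \to \infty} w_{n+1}/w_n \in ]0,\infty[$ from ii) of Corollary~\ref{co:maxcomp} guarantees both the existence of $\kappa(n)$ (via monotonicity of $\Pr{X \ge k}$) and a well-defined value of $\lambda_1$. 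Once these translations are in hand, the cited i.i.d.\ results apply verbatim.
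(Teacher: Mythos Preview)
Your proposal is correct and follows essentially the same approach as the paper: the paper states (in the paragraph immediately preceding Corollary~\ref{co:profile}) that these results are obtained by transferring the i.i.d.\ extreme-value results of \cite[Thms.~19.7, 19.16, 19.19, Cor.~19.14]{MR2908619} via the total variation approximation of Corollary~\ref{co:maxcomp}, which is exactly what you do. One minor remark: your final paragraph treats the light-tailedness of $X$ and the existence of $\lambda_1$ as conditions to be \emph{derived} from hypotheses on $(w_n)$, but in Corollary~\ref{co:profile} these are stated as direct additional \emph{assumptions} on $X$, so no such translation is needed.
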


This complements Theorem~\ref{te:functional}, which treats the infinite variance case for $1 < \alpha < 2$. We remark that the statistics in~Corollary~\ref{co:jump} for this case may alternatively be deduced  from~Corollary~\ref{co:maxcomp}.

Lemma~\ref{le:independent} also ensures that the number $\#_k P_n$ of components of size $k \ge 0$ behaves like a $\mathrm{Bin}(\lfloor n / \mu \rfloor, \Pr{X=k})$ random variable:

\begin{corollary}
	\label{co:comptheifrst}
	Suppose that the assumptions of Theorem~\ref{te:dense} hold. If $(k_n)_{n \ge 1}$ is a sequence of positive integers with $n \Pr{X=k_n} \to \infty$, then for any $\epsilon>0$
	\[
	\lim_{n \to \infty} \Prb{ \left|\frac{\#_{k_n} P_n}{\Pr{X=k_n} n\mu^{-1}} -1\right| > \epsilon} = 0.
	\]
	If $(k_n)_{n \ge 1}$ satisfies $(n/\mu) \Pr{X=k_n} \to \lambda_2 \in [0, \infty[$, then 
	\[
	\#_{k_n} P_n \convdis \mathrm{Poi}(\lambda_2).
	\]
\end{corollary}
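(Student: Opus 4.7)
The argument extracts the corollary from Lemma~\ref{le:independent} together with a first-moment bound on the tail. I apply Lemma~\ref{le:independent} with a sequence $s_n$ as in its statement and set $m := \lfloor n/\mu\rfloor - s_n$. Theorem~\ref{te:dense} ensures that $\{N_n \ge m\}$ has probability tending to one (since $s_n \gg L(n)n^{1/\alpha}$), and on this event the lemma provides a coupling under which $(K_1, \ldots, K_m) = (X_1, \ldots, X_m)$ with probability $1 - o(1)$. Decomposing
\[
\#_{k_n} P_n = Y_n + R_n, \qquad Y_n := \sum_{i=1}^m \one_{K_i = k_n}, \qquad R_n := \sum_{i=m+1}^{N_n} \one_{K_i = k_n},
\]
the coupling gives $Y_n \atv \mathrm{Bin}(m, \Pr{X=k_n})$ in total variation.

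The Binomial main term $Y_n$ is handled by standard means. In the bounded regime $(n/\mu)\Pr{X=k_n} \to \lambda_2 \in [0, \infty[$, one has $m \Pr{X=k_n} \to \lambda_2$ and $\Pr{X=k_n} \to 0$, so the Poisson approximation yields $Y_n \convdis \mathrm{Poi}(\lambda_2)$. In the regime $n\Pr{X=k_n} \to \infty$, Chebyshev's inequality applied to the Binomial (whose variance is bounded by its mean) combined with $m/(n/\mu) \to 1$ gives $Y_n/((n/\mu)\Pr{X=k_n}) \convp 1$.

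It remains to control $R_n$. Conditional on $N_n = r$ and on the first $m$ sizes, the product structure $u(P) = r! v_r \prod_{Q \in P} |Q|! w_{|Q|}$ entails that $(K_{m+1}, \ldots, K_r)$ is distributed as i.i.d.\ copies of $X$ conditioned on their sum being $n - \sum_{i \le m} K_i$. For typical $r \approx n/\mu$ and sum $\approx m\mu$, a local limit estimate (of the same type that drives Lemma~\ref{le:independent}, furnished by~\cite{zbMATH07179510}) yields the uniform bound
\[
\Pr{K_j = k_n \mid N_n = r,\, K_1, \ldots, K_m} \le C\,\Pr{X=k_n},
\]
and since $\Ex{(N_n - m)^+} = O(s_n)$ by Theorem~\ref{te:dense}, this gives $\Ex{R_n} = O(s_n\Pr{X=k_n})$. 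In the Poisson regime $\Pr{X=k_n} = O(1/n)$ yields $\Ex{R_n} = O(s_n/n) \to 0$ since $s_n = o(n)$, so Markov's inequality gives $R_n \convp 0$ and the preceding paragraph furnishes $\#_{k_n} P_n \convdis \mathrm{Poi}(\lambda_2)$. In the law-of-large-numbers regime $\Ex{R_n} = o(n\Pr{X=k_n})$ since $s_n = o(n)$, so $R_n = o_p(n\Pr{X=k_n})$ and $\#_{k_n} P_n/((n/\mu)\Pr{X=k_n}) \convp 1$.

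The main obstacle lies in the uniform local-limit bound on $\Pr{K_j = k_n \mid N_n = r, K_1, \ldots, K_m}$. In the Poisson regime $\Pr{X=k_n}$ is of order $1/n$ and compatible with a classical local limit theorem, but in the law-of-large-numbers regime the magnitude of $\Pr{X=k_n}$ is only constrained to satisfy $n\Pr{X=k_n} \to \infty$, allowing $k_n$ to reach deep into the tail of $X$. The necessary uniform control ultimately rests on the randomly-stopped local-probability estimates of~\cite{zbMATH07179510}, which are the same technical engine behind Theorem~\ref{te:dense} and Lemma~\ref{le:independent}.
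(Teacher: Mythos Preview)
Your decomposition $\#_{k_n}P_n = Y_n + R_n$ and the treatment of the main term $Y_n$ via Lemma~\ref{le:independent} match what the paper has in mind; the paper simply states the corollary as a direct consequence of that lemma without writing out a proof. The difference lies entirely in how you control the remainder $R_n$.

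Your route through a conditional local limit bound is not wrong in spirit, but the justification you give has gaps. First, the appeal to~\cite{zbMATH07179510} is misplaced: that paper supplies asymptotics for $\Pr{S_N=n}$, not uniform bounds on ratios $\Pr{S_{\ell-1}=n'-k_n}/\Pr{S_\ell=n'}$. What you actually need here is Gnedenko's local limit theorem (Equation~\eqref{eq:locallimit}), which gives the uniform upper bound $\Pr{S_\ell=\,\cdot\,}\le C/(g(\ell)\ell^{1/\alpha})$ and a matching lower bound when $n'$ sits in the bulk. Second, the claim $\Ex{(N_n-m)^+}=O(s_n)$ does not follow from Theorem~\ref{te:dense} as stated: the local limit theorem controls probabilities, not moments, and $N_n$ is not a priori bounded when $w_0>0$. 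You would instead need to work with $\Pr{R_n\ge\epsilon\,n\Pr{X=k_n}}$ directly, restricting to the high-probability event where $N_n-m\le 2s_n$ and $n-\sum_{i\le m}K_i$ is in the bulk, and invoking Markov only after conditioning on that event.

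The paper's intended route bypasses all of this via exchangeability, exactly as in the proof of Corollary~\ref{co:maxcomp} around Equation~\eqref{eq:matogege}. Since $(K_1,\ldots,K_{N_n})$ is exchangeable, $R_n=\sum_{i=m+1}^{N_n}\one_{K_i=k_n}$ has the same law as $\sum_{j=1}^{(N_n-m)^+}\one_{K_j=k_n}$. On the high-probability event $\{N_n-m\le 2s_n\}$ this is at most $\sum_{j=1}^{2s_n}\one_{K_j=k_n}$, and since $2s_n\le m$ for large $n$, Lemma~\ref{le:independent} applies again to show this last sum is close in total variation to $\mathrm{Bin}(2s_n,\Pr{X=k_n})$. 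Its mean $2s_n\Pr{X=k_n}$ is $o(1)$ in the Poisson regime and $o(n\Pr{X=k_n})$ in the law-of-large-numbers regime, so Markov's inequality finishes both cases with no further local limit input.
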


Furthermore, we may clarify connections to the balls in boxes model:

\begin{remark}
	By Kolchin's representation theorem (see Proposition~\ref{pro:kolchin} below) we know that if $\rho_u > 0$ then for any integer $\ell$ with $\Pr{N_n = \ell} >0$ we have
	\begin{align}
		\label{eq:bnb}
		\left( (K_1, \ldots, K_{N_n}) \mid N_n = \ell \right) \eqdist \left( (X_1, \ldots, X_\ell) \mid X_1 + \ldots + X_\ell = n    \right).
	\end{align}
	The model on the right-hand side is called a balls in boxes model, with $n$ balls and $\ell$ ordered boxes. It has been extensively studied in~\cite{MR2908619}, in particular for many cases where the number of balls is proportional to the number of boxes with fluctuations  of smaller order. Equation~\eqref{eq:bnb} tells us that the component sizes of a Gibbs partition are distributed like a balls in boxes model with a random number of boxes. By  Theorem~\ref{te:dense} we know that this random number concentrates at a constant multiple of the number of balls, with precise information on the fluctuations. In some cases, results for balls in boxes models hold uniformly for fluctuations of sufficiently high order, allowing us to transfer them via Equation~\eqref{eq:bnb} to the Gibbs partition model. See for example~\cite[Thm. 19.7, first phrase after condition (vi)]{MR2908619}. Many results in~\cite{MR2908619} assume, however,  fluctuations of smaller order than those exhibited by $N_n$. Hence the need for Corollary~\ref{co:maxcomp}. See also~\cite[Problem 19.10]{MR2908619}.
\end{remark}

As a final remark, we note that although Lemma~\ref{le:independent} guarantees that most components become independent from each other, they do not become independent from~$N_n$. The best we can do when including $N_n$ is a contiguity relation:

\begin{lemma}
	\label{le:contiguity}
	Under the assumptions of Theorem~\ref{te:dense}, for each $0 < \delta < \mu^{-1}$ and each $\epsilon \in ]0,1[$ there exists a sequence $(\cF_n)_{n \ge 1}$ and constants $c,C>0$ such that for all sufficiently large~$n$
	\begin{align}
		\Pr{(N_n, K_1, \ldots, K_{N_n - \lfloor\delta n\rfloor)}) \in \cF_n} > 1 - \epsilon
	\end{align}
	and
	\begin{align}
		\Pr{ (N_n, X_1, \ldots, X_{N_n - \lfloor\delta n\rfloor} ) \in \cF_n} > 1 - \epsilon
	\end{align}
	and uniformly for all elements $F \in \cF_n$
	\begin{align}
		c < \frac{\Pr{(N_n, K_1, \ldots, K_{N_n - \lfloor\delta n\rfloor}) = F}}{\Pr{ (N_n, X_1, \ldots, X_{N_n - \lfloor\delta n\rfloor} ) = F}} < C.
	\end{align}
\end{lemma}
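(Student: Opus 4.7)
The plan is to reduce the comparison to a pointwise Radon--Nikodym ratio via Kolchin's representation, and then control this ratio on a good event using a local limit theorem for i.i.d.\ sums with step $X$.

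Write $j = \lfloor \delta n \rfloor$ and $S_r := X_1 + \cdots + X_r$. Kolchin's representation~\eqref{eq:bnb} together with independence gives, for any admissible $F = (\ell, k_1, \ldots, k_{\ell - j})$,
\begin{align*}
\Prb{N_n = \ell,\, K_1 = k_1, \ldots, K_{\ell - j} = k_{\ell - j}} = \Pr{N_n = \ell} \left(\prod_{i=1}^{\ell - j} \Pr{X = k_i}\right) \frac{\Pr{S_j = m(F)}}{\Pr{S_\ell = n}},
\end{align*}
where $m(F) := n - \sum_{i=1}^{\ell - j} k_i$, while the paired law differs only in that the last factor is replaced by $1$. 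The ratio of densities is therefore exactly $R(F) = \Pr{S_j = m(F)}/\Pr{S_\ell = n}$, so the lemma reduces to finding a set $\cF_n$ of asymptotic mass at least $1 - \epsilon$ under both laws on which $R(F)$ is pinched between two positive constants.

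I would take
\begin{align*}
\cF_n = \left\{ F : |\ell - n/\mu| \le M L(n) n^{1/\alpha}\ \text{ and }\ |m(F) - j\mu| \le M L(n) n^{1/\alpha}\right\}
\end{align*}
for $M = M(\epsilon)$ sufficiently large. The constraint on $\ell$ is controlled by the CLT~\eqref{eq:denseclt}, which is common to both laws. For the constraint on $m(F)$: under the paired law, conditionally on $N_n$, $m(F) = n - S_{N_n - j}$ has mean $n - (N_n - j)\mu \approx j\mu$ and stable fluctuations of order $L(n) n^{1/\alpha}$ by the CLT for i.i.d.\ sums, which combined with tightness of $N_n$ yields the required bound. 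Under the Gibbs law, the concentration of $\sum_{i=1}^{N_n - j} K_i$ around $(N_n - j)\mu$ at scale $L(n) n^{1/\alpha}$ follows from the functional limit theorem (Theorem~\ref{te:functional}) evaluated at $s$ close to $1 - \delta\mu$, since the limit $\mu Y_s$ is then a proper random variable.

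Finally, for $F \in \cF_n$, I would invoke the uniform local limit theorem for i.i.d.\ sums with step $X$ in the domain of attraction of an $\alpha$-stable law with $1 < \alpha \le 2$: uniformly in $\ell$ in the chosen window and in $(n - \ell\mu)/(L(\ell)\ell^{1/\alpha})$ bounded,
\begin{align*}
\Pr{S_\ell = n} \sim \frac{1}{L(\ell)\ell^{1/\alpha}}\, f\!\left(\frac{n - \ell\mu}{L(\ell)\ell^{1/\alpha}}\right),
\end{align*}
with $f$ the density of the spectrally positive stable limit, and analogously for $\Pr{S_j = m(F)}$ at scale $L(j) j^{1/\alpha} \asymp L(n) n^{1/\alpha}$. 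Since $f$ is continuous and strictly positive on $\ndR$ for every $1 < \alpha \le 2$, both probabilities are of order $(L(n) n^{1/\alpha})^{-1}$ on $\cF_n$, so $R(F)$ lies in a fixed compact subset of $(0, \infty)$. The main obstacle is providing the LLT uniformly in this two-parameter window, simultaneously in $\ell$ (of length $L(n) n^{1/\alpha}$ around $n/\mu$) and in the shift; this uniform local probability estimate is precisely the type developed in~\cite{zbMATH07179510} and underpinning the proof of Theorem~\ref{te:dense}, and it covers both case (i), where $X$ inherits regular variation $\Pr{X \ge k} \sim c L_w(k) k^{-(a-1)}$ from $w_n$, and case (ii), where $X$ has exponential moments so the classical Gnedenko--Stone LLT suffices.
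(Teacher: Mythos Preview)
Your approach matches the paper's: compute the density ratio as $\Pr{S_{\lfloor \delta n\rfloor} = m(F)}/\Pr{S_\ell = n}$, restrict to a good set where both $\ell$ and $m(F)$ have bounded standardized deviations, and pinch the ratio via the local limit theorem together with positivity and continuity of the stable density. Two minor corrections: the uniform LLT you flag as the main obstacle is just Gnedenko's classical statement~\eqref{eq:locallimit}, which is already uniform in the target and applies because every $\ell$ in the window satisfies $\ell\sim n/\mu\to\infty$ (and likewise $j=\lfloor\delta n\rfloor\to\infty$), so no input from~\cite{zbMATH07179510} is needed here; and the paper verifies the Gibbs-law concentration of $\sum_{i\le N_n-j}K_i$ directly via Lemma~\ref{le:independent} rather than through Theorem~\ref{te:functional}, which itself rests on that lemma.
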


Contiguity relations of this form are very useful for combinatorial applications, see for example~\cite{2022arXiv220200592S}.

\subsection{The convergent case}
\label{sec:convergent}

The convergent case is characterized by a stochastically bounded number of components in the Gibbs partition $P_n$, with a unique giant component and a finite distributional limit for the small fragments.

If $0<W(\rho_w) < \infty$, we may define a random variable $X$ as in Equation~\eqref{eq:defX}, with probability generating function
\[
\Ex{z^X} = \frac{W(\rho_w z)}{W(\rho_w)}.
\]
We let $(X_i)_{i \ge 1}$ denote independent copies of $X$.
If additionally $0<V'(W(\rho_w))<~\infty$, we may define  random variables $N$ and ~$\hat{N}$ with probability generating functions
\begin{align}
	\Ex{z^N} = \frac{V(W(\rho_w)z) }{V(W(\rho_w))} \qquad \text{and} \qquad \Ex{z^{\hat{N}}} = \frac{V'(W(\rho_w)z) z}{V'(W(\rho_w))},
\end{align}
so that $N, \hat{N}$ are independent from~$(X_i)_{i \ge 1}$.
Let $S_{\hat{N}-1} = \sum_{i=1}^{\hat{N}-1}X_i$ and let $J \in \{1, \ldots, \hat{N}-1\}$ be drawn uniformly at random, independent from all previously considered random variables. We let
\begin{align}
	\ndN^+ = \bigcup_{n \ge 1} \ndN_0^n
\end{align}
the collection of all finite non-empty sequences of non-negative integers. This allows us to form the random finite tuple of integers
\begin{align}
	\Xi_n = \left(X_1, \ldots, X_{J-1}, n - S_{\hat{N}-1}, X_{J}, \ldots, X_{\hat{N}-1}\right).
\end{align}

\begin{theorem}
	\label{te:convergent}
	Suppose that $0<V'(W(\rho_w))< \infty$.
	Each of the following additional conditions is sufficient in order for 
	\begin{align}
		\label{eq:convcomponents}
		N_n \convdis \hat{N}
	\end{align}
	and
	\begin{align}
		\label{eq:tvapprox}
		\lim_{n \to \infty} \sup_{A \subset \ndN^+} \left| \Prb{(K_1, \ldots, K_{N_n}) \in A} - \Prb{\Xi_n \in A}  \right| = 0
	\end{align}
to hold.
	\begin{enumerate}[\qquad i)]
		\item We have $\rho_v = W(\rho_w)$ and \[
		v_n = L_v(n) n^{-b}\rho_v^{-n} \qquad \text{and} \qquad w_n = L_w(n) n^{- a} \rho_w^{-n}
		\]
		for slowly varying functions~$L_v$ and $L_w$, an exponent $b > 2$, and an exponent~$a$ such that either $1 < a < b$, or $a=b$ and $L_v(n) = o(L_w(n))$.
		\item We have $\infty \ge \rho_v > W(\rho_w)$ and \[
		\frac{w_n}{w_{n+1}} \to \rho_w > 0 \qquad \text{and} \qquad \frac{1}{w_n} \sum_{i+j = n} w_i w_j \to 2 W(\rho_w) < \infty\] as $n \to \infty$. (Note that the last two limits are automatically satisfied in case $w_n = L_w(n) n^{- a} \rho_w^{-n}$ with $a>1$ constant,  $L_w$ slowly varying, and $\rho_w > 0$.)
		\item We have $\Ex{N^{1+a+\delta}}< \infty$ and $w_n = L_w(n) n^{- a} \rho_w^{-n}$ for $a>1$, $\delta>0$ constants  and $L_w$ a slowly varying function.
		\item We have $w_n \sim c_w n^{- a} \rho_w^{-n}$ for constants $a>1$, $c_w > 0$, and additionally one of the following conditions hold.
		\subitem a) $\Ex{N^{1+a}}< \infty$.
		\subitem b) $1<a<2$.
		\subitem c) $a = 2$ and $\Ex{N (\log N)^{2 + \delta}} < \infty$ for some $\delta >0$.
		\subitem d) $2<a<3$ and $\Pr{N=n} = o(\Pr{X=n})$.
		\subitem e) $a=3$ and $\Pr{N = n} = o(1 / (n^3 \log \log n))$.
	\end{enumerate}
\end{theorem}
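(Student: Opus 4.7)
My plan is to reformulate Theorem~\ref{te:convergent} probabilistically via Kolchin's representation (Proposition~\ref{pro:kolchin}) and then apply local limit theorems for subexponential randomly stopped sums. By the tilting invariance recalled in Section~\ref{sec:gibbs} I may set $p_k := w_k\rho_w^k / W(\rho_w)$, which is a genuine probability density since $0<W(\rho_w)<\infty$; let $X,X_1,X_2,\ldots$ be i.i.d.\ with this law, $S_\ell := X_1+\cdots+X_\ell$, and $N$ have probability generating function $V(W(\rho_w)z)/V(W(\rho_w))$ and be independent of the $X_i$'s. Coefficient extraction in $V(W(z))$ yields $u_n = \rho_w^{-n}V(W(\rho_w))\Pr{S_N=n}$ and $v_\ell[z^n]W(z)^\ell = \rho_w^{-n}V(W(\rho_w))\Pr{N=\ell}\Pr{S_\ell=n}$, whence
\[
\Pr{N_n=\ell} = \frac{\Pr{N=\ell}\,\Pr{S_\ell=n}}{\Pr{S_N=n}}.
\]
Together with Kolchin's identity $((K_1,\ldots,K_{N_n})\mid N_n=\ell)\eqdist((X_1,\ldots,X_\ell)\mid S_\ell=n)$, this reduces the theorem to (a)~showing $\Pr{S_\ell=n}/\Pr{S_N=n}\to\ell/\Ex{N}$ for each fixed $\ell$, and (b)~a total-variation one-big-jump approximation of $(X_1,\ldots,X_\ell)\mid S_\ell=n$.

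The core step is subexponentiality of $(p_k)$ together with the local asymptotic $\Pr{S_N=n}\sim\Ex{N}p_n$. In case~(ii) the two assumed limits on $w_n/w_{n+1}$ and the convolution ratio translate exactly to $p_n/p_{n+1}\to 1$ and $(p\ast p)(n)\sim 2p_n$, i.e.\ long-tailedness together with subexponentiality; in cases~(i), (iii), (iv) the regularly varying weight $w_n = L_w(n)n^{-a}\rho_w^{-n}$ makes $(p_k)$ regularly varying of index $-a$, hence subexponential. The classical subexponential convolution formula then gives $\Pr{S_\ell=n}\sim\ell p_n$ for each fixed $\ell$. For the denominator I invoke the local limit theorem for subexponential randomly stopped sums of~\cite{zbMATH07179510}, together with the older Chover--Ney--Wainger and Foss--Korshunov results in the simpler regimes. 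The four hypothesis sets are calibrated to the four regimes in which $\Pr{S_N=n}\sim\Ex{N}p_n$ is available: case~(ii) is the strongly subcritical regime where $N$ has all exponential moments; case~(iii) supplies the polynomial moment $\Ex{N^{1+a+\delta}}<\infty$ needed in the Foss--Korshunov version; subcases~(iv)(a)--(e) match the sharp moment thresholds from~\cite{zbMATH07179510} as $a$ varies and as $\Pr{N=n}$ is compared with $p_n$; finally the critical case~(i) uses the extension allowing both $v_n$ and $w_n$ to be subexponential, where the stipulation $b\ge a$ (with strict slowly varying domination if $b=a$) ensures that the tail of $v_n$ does not produce a competing big jump. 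Dividing the two asymptotics yields $\Pr{N_n=\ell}\to\ell\Pr{N=\ell}/\Ex{N}=\Pr{\hat N=\ell}$ pointwise in $\ell$, and Scheff\'{e}'s lemma upgrades this to~\eqref{eq:convcomponents}, giving in particular tightness of $(N_n)$.

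For the TV approximation~\eqref{eq:tvapprox} I combine Kolchin with the classical one-big-jump principle for subexponential densities: for each fixed $\ell\ge 1$,
\[
d_{\mathrm{TV}}\!\left((X_1,\ldots,X_\ell)\mid S_\ell=n,\,(X_1,\ldots,X_{J-1},\,n-S_{\ell-1},\,X_J,\ldots,X_{\ell-1})\right)\to 0
\]
as $n\to\infty$, where $J$ is uniform on $\{1,\ldots,\ell-1\}$ independent of the $X_i$'s (this convention matches $\Xi_n$; the specific position of the big jump is immaterial by exchangeability of the i.i.d.\ family). Given $\epsilon>0$, the tightness from Step~2 lets me choose $L$ with $\Pr{N_n>L}+\Pr{\hat N>L}<\epsilon$ for large $n$; summing the one-big-jump bound against the weights $\Pr{N_n=\ell}\to\Pr{\hat N=\ell}$ over $\ell\le L$ and absorbing the tail $\ell>L$ into $\epsilon$ yields~\eqref{eq:tvapprox} with limiting distribution $\Xi_n$.

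The main obstacle is Step~2, specifically verifying $\Pr{S_N=n}\sim\Ex{N}p_n$ in the delicate subcases~(i), (iv)(c), (iv)(d), (iv)(e), where $\Pr{N=n}$ is not negligible relative to $p_n$ and a second big jump arising from an atypically large value of $N$ must be ruled out. This is precisely the regime where the refinements of~\cite{zbMATH07179510} beyond the classical subexponential theory are indispensable, and is the reason for the rather long list of alternative hypotheses in the statement.
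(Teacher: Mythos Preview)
Your proposal is correct and follows essentially the same approach as the paper: both use Kolchin's representation with $r=\rho_w$ and reduce everything to the long-tailedness $\Pr{X=n-y}\sim\Pr{X=n}$ together with the key asymptotic $\Pr{S_N=n}\sim\Ex{N}\Pr{X=n}$, invoking exactly the same external results (\cite[Thm.~4.31]{MR3097424} for case~(ii) and \cite{zbMATH07179510} for the remaining cases). The only difference is organizational: the paper computes the point probabilities $\Pr{\phi(K_1,\ldots,K_{N_n})=(x_1,\ldots,x_{j-1},*,x_j,\ldots,x_{m-1})}$ directly in one line and lets Scheff\'e do the rest, rather than first establishing $N_n\convdis\hat N$, then a separate one-big-jump TV lemma for each fixed $\ell$, and finally combining via tightness---so your route is slightly longer but equally valid.
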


Under the assumptions of Theorem~\ref{te:convergent} it follows that the Gibbs partition $P_n$ has a unique giant component whose size $M_n$ satisfies
\begin{align}
	n - M_n  \convdis  \sum_{i=1}^{\hat{N}-1} X_i.
\end{align}
It also follows that the result $\phi(K_1, \ldots, K_{N_n})$ of replacing the left-most maximal component by placeholder value $*$ satisfies
\begin{align}
	\label{eq:convergentrephrase}
	\phi(K_1, \ldots, K_{N_n}) \convdis \left(X_1, \ldots, X_{J-1}, *, X_{J}, \ldots, X_{\hat{N}-1}\right).
\end{align}

Asymptotic behaviour of this form was proven in~\cite{MR2121024} for random partitions satisfying a conditioning relation using a perturbed Stein recursion approach.
Theorem~\ref{te:convergent} on Gibbs partitions was verified later for condition ii) in~\cite{zbMATH06964621}, using the theory of subexponential probability distributions~\cite{MR0348393, MR714482,MR772907,MR3097424}. The wealth of additional sufficient conditions that even include cases of critical composition schemes is made possible by  new results on local probabilities of randomly stopped sums by~\cite{zbMATH07179510}. 

In~\cite[Thm. 1]{zbMATH01135939}  the size of the largest component of $P_n$ was determined assuming that $\infty \ge \rho_v > W(\rho_w)$ and that the function $W(z)$ admits a singularity expansion at its dominant singularity that results in $w_n \sim c_w \rho_w^{-n} n^{-a} (\log n)^\lambda$ for $1<a< 2$ and $\lambda \in \ndR$. Furthermore, \cite[Prop. IX.1]{MR2483235} determined the asymptotic number of components under similar analytic assumptions with $\lambda=0$. Theorem~\ref{te:convergent} encompasses these settings  and additionally shows that there is a  limit distribution for the small fragments.

\subsection{The mixture case}

\label{sec:mixture}

In the mixture case, with a  limiting probability $p \in ]0,1[$ the Gibbs partition $P_n$ behaves as in the dense case, and with limiting probability $1-p$ it behaves as in the convergent case.

We use the notation from Sections~\ref{sec:dense} and~\ref{sec:convergent}.
\begin{theorem}
	\label{te:mixture}
	Suppose that $\rho_v = W(\rho_w)$. Furthermore, suppose that
	\[
	v_n = L_v(n) n^{-a}\rho_v^{-n} \qquad \text{and} \qquad w_n = L_w(n) n^{- a} \rho_w^{-n}
	\]	
	for slowly varying functions~$L_v, L_w$ and a constant $a > 2$. Suppose that the limit
	\[
	p := \lim_{n \to \infty} \frac{\mu^{a-1}}{V'(W(\rho_w))}  \frac{L_v(n)}{L_w(n)} >0
	\]
	exists and is positive. Then the event $\cE_n= \{ N_n \ge n / (2 \mu)\}$ satisfies
	\begin{align}
		\lim_{n \to \infty} \Pr{\cE_n} = p \in ]0,1[
	\end{align}
	and the following conditional properties.
	\begin{enumerate}[\qquad a)]
		\item On the event $\cE_n$, the Gibbs partition behaves as in the dense regime:
		\begin{align}
			\label{eq:mixtureclt}
			\left( \frac{N_n - n/\mu}{L(n) n^{1/\alpha}} \,\,\Big\vert\,\, \cE_n \right) \convdis X_\alpha(\gamma, -1, 0)
		\end{align}
		as $n \to \infty$,  and for any $\delta > 0$
		\begin{align}
			\label{eq:mixturellt}
			\lim_{n \to \infty} \sup_{\ell \ge \delta n} \left| L(n) n^{1/\alpha} \Pr{ N_n = \ell \mid \cE_n} - h\left( \frac{\ell - n/\mu}{L(n) n^{1/\alpha}}  \right)  \right| =0,
		\end{align}
		with the index $\ell$ ranging over all integers satisfying $(\ell - \mu n)/(L(n) n^{1/\alpha}) \in \Omega$. 
		\item On the complementary event $\cE_n^c$, the Gibbs partition behaves as in the convergent regime:
		\begin{align}
			\label{eq:mixturecomponents}
			(N_n \mid \cE_n^c) \convdis \hat{N}
		\end{align}
		and
		\begin{align}
			\label{eq:mixturetvapprox}
			\lim_{n \to \infty} \sup_{A \subset \ndN^+} \left| \Prb{(K_1, \ldots, K_{N_n}) \in A \mid \cE_n^c} - \Prb{\Xi_n \in A}  \right| = 0.
		\end{align}
	\end{enumerate}
\end{theorem}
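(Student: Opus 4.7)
The plan is to decompose $u_n = u_n^{(D)} + u_n^{(C)}$ via the identity
\[
u_n \rho_w^n = \sum_\ell \tilde v_\ell \Pr{S_\ell = n}, \qquad \tilde v_\ell := v_\ell W(\rho_w)^\ell = L_v(\ell)\ell^{-a},
\]
split at $\ell = n/(2\mu)$. Here I use the criticality $\rho_u = \rho_w$ and Kolchin's representation $[z^n]W(z)^\ell = W(\rho_w)^\ell \rho_w^{-n} \Pr{S_\ell = n}$, where $S_\ell = X_1+\cdots+X_\ell$ is the sum of i.i.d.\ copies of the variable $X$ from~\eqref{eq:defX}. Once both pieces are evaluated, $\Pr{\cE_n} = u_n^{(D)}/u_n$ gives the mixing probability, and the conditional limit laws follow by restricting the identity above to the respective range of~$\ell$.

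For the \emph{dense piece}, the hypothesis $a>2$ puts~$X$ in the domain of attraction of an $\alpha$-stable law with $\alpha = \min(a-1,2)$, so the $\alpha$-stable local limit theorem for $\Pr{S_\ell = n}$ applies. Combined with the slow variation $\tilde v_\ell \sim \mu^a L_v(n) n^{-a}$ across the fluctuation window $\ell = n/\mu + O(L(n)n^{1/\alpha})$ and the renewal identity $\sum_\ell \Pr{S_\ell = n} \to 1/\mu$ (whose mass sits in that window), a standard tail estimate produces
\[
u_n^{(D)} \rho_w^n \sim \mu^{a-1} L_v(n) n^{-a}.
\]
For the \emph{convergent piece}, regular variation of $w_n$ makes the tilted probabilities $w_k \rho_w^k / W(\rho_w)$ subexponential, so the results of~\cite{zbMATH07179510} on local probabilities for randomly stopped sums---the engine behind Theorem~\ref{te:convergent}---supply the uniform-in-$\ell$ big-jump asymptotic $[z^n] W(z)^\ell \sim \ell W(\rho_w)^{\ell-1} w_n$. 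Multiplying by $v_\ell$, summing, and identifying $\sum_\ell \ell v_\ell W(\rho_w)^{\ell-1} = V'(W(\rho_w)) < \infty$ (finite since $a>2$) yields
\[
u_n^{(C)} \rho_w^n \sim V'(W(\rho_w)) L_w(n) n^{-a}.
\]
The ratio of these two asymptotics, combined with the definition of~$p$, gives $\Pr{\cE_n} \to p$.

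For part~a) of the theorem, I would insert the asymptotic for $u_n^{(D)}$ into
\[
\Pr{N_n = \ell \mid \cE_n} = \frac{\tilde v_\ell \Pr{S_\ell = n}}{u_n^{(D)} \rho_w^n}, \qquad \ell \ge n/(2\mu),
\]
and use slow variation of $L_v$ together with the $\alpha$-stable local limit theorem for $\Pr{S_\ell = n}$ to recover~\eqref{eq:mixturellt}; the CLT~\eqref{eq:mixtureclt} is then its integrated form. For part~b), the conditional distribution $(P_n \mid \cE_n^c)$ concentrates on $\ell < n/(2\mu)$, where only the convergent piece of the sum matters. Since the candidate limit $\hat N$ is finite a.s., the truncation at $n/(2\mu)$ is asymptotically invisible, and a direct adaptation of the argument of Theorem~\ref{te:convergent} (driven by the same big-jump asymptotic) yields~\eqref{eq:mixturecomponents}--\eqref{eq:mixturetvapprox}.

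The principal obstacle is the uniformity in~$\ell$ required for the big-jump asymptotic on the convergent side: naive subexponentiality gives $[z^n] W(z)^\ell \sim \ell W(\rho_w)^{\ell-1} w_n$ only for fixed~$\ell$, whereas the sum ranges up to $\ell = n/(2\mu)$. The quantitative input from~\cite{zbMATH07179510} is precisely what permits interchanging the limits $n \to \infty$ and $\ell \to \infty$. A secondary subtlety is that criticality leaves the prefactor $L_v(\ell)\ell^{-a}$ in~$\tilde v_\ell$ without exponential decay, so both contributions to $u_n$ genuinely share the order $n^{-a}$ and must be tracked simultaneously with their correct slowly varying leading coefficients.
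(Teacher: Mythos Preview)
Your approach is essentially the same as the paper's: both split the representation $u_n\rho_w^n = \sum_\ell \tilde v_\ell\,\Pr{S_\ell=n}$ at $\ell=n/(2\mu)$, handle the dense piece via the stable local limit theorem plus the renewal-type concentration $\sum_{|\ell\mu-n|\le t_n}\Pr{S_\ell=n}\to\mu^{-1}$, and handle the convergent piece via the big-jump asymptotic for $\Pr{S_\ell=n}$ summed against $\tilde v_\ell$. The only difference is packaging: the paper invokes \cite[Thm.~1(ii), Eq.~(131)--(132)]{zbMATH07179510} as a black box, which delivers simultaneously the two-term asymptotic $\Pr{S_N=n}\sim\Ex{N}\Pr{X=n}+\mu^{-1}\Pr{N=\lfloor n/\mu\rfloor}$ and the statement that the split at $N=n/(2\mu)$ separates the two terms; you instead sketch the contents of that reference. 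Your identification of the principal obstacle---uniformity of the big-jump estimate up to $\ell=n/(2\mu)$---is exactly right, and is precisely what \cite{zbMATH07179510} supplies.
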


Such a mixture behaviour for the number of components was previously observed for the case $\alpha=3/2$ in the famous work~\cite[Thm. 5]{MR1871555}, assuming that the generating series $V(z)$ and $W(z)$ admit singularity expansions that result in asymptotic  power laws with exponent $-5/2$ for their coefficients. The proofs in~\cite{MR1871555} are based on singularity analysis and saddle-point bounds. Here we pursue a probabilistic approach using local limit theorems for random walks and randomly stopped sums~\cite{zbMATH07179510} instead.

The statements of Theorem~\ref{te:functional}, Corollary~\ref{co:jump}, Lemma~\ref{le:independent}, Corollary~\ref{co:maxcomp}, and Lemma~\ref{le:contiguity} hold by analogous proofs for $((K_1, \ldots, K_{N_n}) \mid \cE_n)$. In particular, under the assumptions of Theorem~\ref{te:mixture}, we have
\begin{align}
	\left( \left( \frac{\sum_{i=1}^{\lfloor s N_n \rfloor}K_i - N_n s \mu }{L(n)n^{1/\alpha}}, \,\, 0 \le s \le 1 \right)  \,\,\Big\vert\,\, \cE_n \right) \convdis (\mu Y_s, \,\, 0 \le s \le 1)
\end{align}
in $\mathbb{D}([0,1], \ndR)$ as $n \to \infty$, and consequently
\begin{align}
	\left( \frac{1}{L(n)n^{1/\alpha}} \max_{1 \le i \le N_n} K_i \,\,\Big\vert\,\, \cE_n \right) \convdis \begin{cases}W_1, &1<\alpha<2 \\ 0, &\alpha=2. \end{cases}
\end{align}
Likewise, for each integer $j \ge 2$
\begin{align}
	\frac{1}{L(n)n^{1/\alpha}} (K_{(j)} \mid \cE_n) \convdis W_j
\end{align}
in case $1 < \alpha < 2$.

\subsection{The dilute case}

\label{sec:dilute}

In the dilute case the number of components gets spread out, with a  continuous limit distribution after rescaling without centring by $n^{-\alpha}$ for $0< \alpha < 1$.

\begin{theorem}
	\label{te:dilute}
	Suppose that $\rho_v = W(\rho_w)$. Furthermore, suppose that
	\[
	v_n = L_v(n) n^{-b}\rho_v^{-n} \qquad \text{and} \qquad w_n \sim c_w n^{- a} \rho_w^{-n}
	\]	
	for a slowly varying function~$L_v$, a constant $c_w>0$, and exponents $1<a,b<2$. Let $\alpha=a-1$, and let $f$ denote the density function of the stable distribution $S_\alpha(\gamma, 1,0)$ for
	\[
	\gamma= \left(\frac{c_w}{W(\rho_w)\alpha} \Gamma(1- \alpha) \cos \frac{\pi \alpha}{2} \right)^{1/\alpha}.
	\]
	Then
	\begin{align}
		\label{eq:diluteclt}
		\frac{N_n}{n^{\alpha}} \convdis Z
	\end{align}
	for a random variable $Z>0$ with density function
	\[
	\tilde{f}(x) = \frac{1}{\alpha \Ex{(X_{\alpha}(\gamma, 1,0))^{\alpha(b-1)}}} \frac{f\left(\frac{1}{x^{1/\alpha}}\right)}{x^{b+1/\alpha}}.
	\]
	For any constant $\delta>0$ we have a local limit theorem
	\begin{align}
		\label{eq:dilutellt}
		\lim_{n \to \infty} \sup_{\ell \ge \delta n^{\alpha}} \left|n^{\alpha} \Pr{ N_n = \ell} - \tilde{f}(\ell / n^{\alpha})  \right| =0.
	\end{align}
\end{theorem}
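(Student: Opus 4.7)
The plan is to reduce~\eqref{eq:dilutellt} and~\eqref{eq:diluteclt} to standard random walk asymptotics via Kolchin's representation theorem (Proposition~\ref{pro:kolchin}). Since the composition scheme is critical we may take $\rho_u = \rho_w$. With $N$ as in Section~\ref{sec:convergent}, $X$ as in~\eqref{eq:defX}, and $S_\ell = X_1 + \ldots + X_\ell$ a sum of iid copies of $X$ independent of $N$, Kolchin's identity gives
\[
\Pr{N_n = \ell} \;=\; \Pr{N = \ell \mid S_N = n} \;=\; \frac{\Pr{N = \ell}\,\Pr{S_\ell = n}}{\Pr{S_N = n}}.
\]
From the hypothesis on $v_n$ and $\rho_v = W(\rho_w)$ one reads off $\Pr{N = \ell} = L_v(\ell)\,\ell^{-b}/V(W(\rho_w))$, so $N$ is regularly varying with tail index $-(b-1) \in (-1,0)$. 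From $w_n \sim c_w n^{-a}\rho_w^{-n}$ with $\alpha = a-1 \in (0,1)$ one obtains $\Pr{X \ge k} \sim (c_w/(W(\rho_w)\alpha))\,k^{-\alpha}$, placing $X$ in the normal domain of attraction of $X_\alpha(\gamma,1,0)$ with the stated $\gamma$ and scaling $S_\ell/\ell^{1/\alpha} \convdis X_\alpha(\gamma,1,0)$. A Gnedenko--Stone type local limit theorem (aperiodicity arranged by a strong periodicity reduction) then yields $\sup_n|\ell^{1/\alpha}\Pr{S_\ell = n} - f(n/\ell^{1/\alpha})| \to 0$, where $f$ is the density of $X_\alpha(\gamma,1,0)$.

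For the denominator I would write $\Pr{S_N = n} = \sum_\ell \Pr{N = \ell}\Pr{S_\ell = n}$, insert the previous asymptotics together with slow variation of $L_v$, and perform the change of variables $\ell = x n^\alpha$ so that the sum is approximated by $n^{\alpha-1}$ times a Riemann integral in $x$. A further substitution $y = x^{-1/\alpha}$ converts that integral into the moment $\int_0^\infty y^{\alpha(b-1)} f(y)\,dy = \Ex{X_\alpha(\gamma,1,0)^{\alpha(b-1)}}$, yielding
\[
\Pr{S_N = n} \;\sim\; \frac{\alpha\,\Ex{X_\alpha(\gamma,1,0)^{\alpha(b-1)}}}{V(W(\rho_w))}\, L_v(n^\alpha)\, n^{-\alpha(b-1)-1};
\]
alternatively this may be imported directly from the randomly-stopped-sum local probability machinery of~\cite{zbMATH07179510}. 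Plugging both asymptotics back with $\ell = x n^\alpha$, the slowly varying factor and the powers of $n$ all cancel, leaving $n^\alpha \Pr{N_n = \ell} \to \tilde f(x)$ pointwise for $x > 0$.

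To upgrade pointwise convergence to~\eqref{eq:dilutellt} I would combine uniformity on compact intervals $[\delta, M]$ (from the uniform local limit theorem, continuity of $f$, and Potter bounds for $L_v$) with a tail estimate for $\ell > M n^\alpha$: in this regime $n/\ell^{1/\alpha} \to 0$ and the rapid vanishing of $f$ near the origin forces $\Pr{S_\ell = n} = o(\ell^{-1/\alpha})$ uniformly, so the contribution of these $\ell$ becomes negligible as $M \to \infty$. The distributional limit~\eqref{eq:diluteclt} then follows from~\eqref{eq:dilutellt} by Riemann summation on $[x_1, x_2]$ together with the total mass identity $\int_0^\infty \tilde f = 1$ (a by-product of the $\Pr{S_N = n}$ computation via the same $y = x^{-1/\alpha}$ substitution), which rules out escape of mass to $0$ or $\infty$. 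The hardest point is expected to be the joint uniform control of $\Pr{S_\ell = n}$ across the full range $\ell \ge \delta n^\alpha$: the two heavy-tailed regimes of $N$ and $X$ interact, and setting up a dominated-convergence argument that simultaneously handles the small-$\ell$ contribution (where $S_\ell$ must land on an atypically large value $n$) and the large-$\ell$ contribution (where $S_\ell$ typically exceeds $n$) is the delicate technical point, where the framework of~\cite{zbMATH07179510} seems essential.
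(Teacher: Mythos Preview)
Your approach is essentially identical to the paper's: Kolchin's representation, Gnedenko's local limit theorem for $\Pr{S_\ell=n}$, the asymptotic $\Pr{S_N=n}\sim \alpha\,\Ex{X_\alpha(\gamma,1,0)^{\alpha(b-1)}}\,V(W(\rho_w))^{-1}L_v(n^\alpha)n^{-1-\alpha(b-1)}$ imported from~\cite{zbMATH07179510}, and Potter bounds for the tail $\ell/n^\alpha\to\infty$.

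One remark: you overstate the difficulty of the uniform control. Since the supremum in~\eqref{eq:dilutellt} is only over $\ell\ge\delta n^\alpha$, the ratio $n/\ell^{1/\alpha}$ is bounded above by $\delta^{-1/\alpha}$, so there is no ``small-$\ell$'' regime in which $S_\ell$ must hit an atypically large value. The only non-compact case is $x=\ell/n^\alpha\to\infty$, where $n/\ell^{1/\alpha}\to 0$; here Gnedenko's theorem already gives $\ell^{1/\alpha}\Pr{S_\ell=n}=f(n/\ell^{1/\alpha})+o(1)=o(1)$ uniformly, and the Potter bounds on $L_v(\ell)/L_v(n^\alpha)$ (as the paper does) absorb the remaining powers of $x$. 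The genuinely delicate small-$\ell$ analysis lives entirely inside the computation of $\Pr{S_N=n}$, and that is precisely what~\cite{zbMATH07179510} packages for you; no separate dominated-convergence argument is needed.
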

A similar local limit theorem for the number of components  as in Theorem~\ref{te:dilute}, and similar component size statistics as in Corollary~\ref{co:poiz} below were  recently established using analytic methods in~\cite[Thm. 4.1, Thm. 5.1]{2021arXiv210303751B}  alongside a multitude of combinatorial applications,  assuming that the generating series $V(z)$ and $W(z)$ admit suitable singular expansions. The approach for Theorem~\ref{te:dilute} and its applications is instead based on local probabilities of random walks and randomly stopped sums~\cite{zbMATH07179510}. The work~\cite{2021arXiv210303751B} also studies limits of extended composition schemes, which we treat in Section~\ref{sec:product} below using a unified approach that encompasses all combinations of regimes  (dense, convergent, mixture, and dilute) under consideration.

By~\cite[Thm. 4.1]{2011arXiv1112.0220J}, the density function $f$ satisfies for all $x>0$
\begin{align}
\lambda^{1/\alpha}f(x\lambda^{1/\alpha}) =  \frac{1}{\pi x} \sum_{k=1}^\infty \frac{\Gamma(k \alpha +1)}{k!} (-x^{-\alpha})^k \sin(- \alpha k \pi),
\end{align}
with
\begin{align}
\label{eq:lambdadilute}	
\lambda := \frac{c_w}{W(\rho_w)\alpha} \Gamma(1-\alpha).
\end{align}
Furthermore, by~\cite[Thm. 5.1, Ex. 5.5]{2011arXiv1112.0220J} it holds for all $s \in \ndC$ with $\Re(s) < \alpha$ that
\begin{align}
	\label{eq:xalphadilute}
	\Ex{X_{\alpha}(\gamma, 1,0)^s} = \lambda^{s/\alpha} \frac{\Gamma(1- s/\alpha)}{\Gamma(1-s)}.
\end{align}
Using integration by substitution it follows that for all $r \in \ndC$ with $\Re(r) > b-2$
\begin{align}
	\label{eq:momzr}
	\Ex{Z^r} = \frac{\Ex{(X_{\alpha}(\gamma, 1,0))^{\alpha(b-r-1)}}}{\Ex{(X_{\alpha}(\gamma, 1,0))^{\alpha(b-1)}}} = \lambda^{-r} \frac{\Gamma(2-b+r) \Gamma(1- \alpha(b-1))}{\Gamma(2-b) \Gamma(1- \alpha(b-r-1))}.
\end{align}
Furthermore, it follows that the density function $\tilde{f}$ of $Z>0$ admits the expression
\begin{align}
	\tilde{f}(x) = \frac{\Gamma(1- \alpha(b-1))}{\Gamma(2-b) \alpha^{2-b} \left( \frac{c_w}{W(\rho_w)} \Gamma(1-\alpha) \right)^{b-1}} \frac{f\left(\frac{1}{x^{1/\alpha}}\right)}{x^{b+1/\alpha}}.
\end{align}

As before, we let $X$ denote a random non-negative integer with probability generating function
$
\Ex{z^X} = \frac{W(\rho_w z)}{W(\rho_w)},
$
so that $\Pr{X=n} \sim \frac{c_w}{W(\rho_w)}n^{-a} $.

\begin{corollary}
	\label{co:poiz}
	Suppose that the assumptions of Theorem~\ref{te:dilute} hold.  If $(k_n)_{n \ge 1}$ is a sequence of positive integers with $n^\alpha \Pr{X=k_n} \to \infty$, then 
	\begin{align}
		\label{eq:refer1}
	\frac{\#_{k_n} P_n}{\Pr{X=k_n}n^{\alpha}} \convdis Z.
	\end{align}
	If $(k_n)_{n \ge 1}$ satisfies $n^{\alpha} \Pr{X=k_n} \to \upsilon \in [0, \infty[$ instead (that is, if $k_n \sim \left( \frac{c_w}{\upsilon W(\rho_w)} \right)^{\frac{1}{1+\alpha}} n^{\frac{\alpha}{1+\alpha}}$ for $\upsilon>0$, and $k_n \gg n^{\frac{\alpha}{1+\alpha}}$ for $\upsilon=0$), then 
	\begin{align}
		\label{eq:refer2}
	\#_{k_n} P_n \convdis \mathrm{Poi}(\upsilon Z).
	\end{align}
	Here $\mathrm{Poi}(\upsilon Z)$ denotes a Poisson random variable with  random parameter $\upsilon Z$, so that
	\[
		\Pr{\mathrm{Poi}(\upsilon Z) = k} = \Exb{ \frac{(\upsilon Z)^k}{k! \exp(\upsilon Z)}}, \qquad k \ge 0.
	\]
\end{corollary}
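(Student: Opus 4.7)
The plan is to condition on $N_n$, apply Kolchin's representation to reduce the count $\#_{k_n} P_n$ to a binomial statistic on a sum of i.i.d.\ copies of $X$, strip the conditioning off via the local limit theorem for walks attracted to an $\alpha$-stable law, and finally mix over the limit $Z$ provided by Theorem~\ref{te:dilute}.

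For each integer $\ell$ with $\Pr{N_n = \ell}>0$, Kolchin's representation (Proposition~\ref{pro:kolchin}) gives $(K_1, \ldots, K_{N_n}) \mid N_n = \ell \eqdist (X_1, \ldots, X_\ell) \mid S_\ell = n$, with $S_\ell = X_1 + \cdots + X_\ell$. Writing $p_n = \Pr{X=k_n}$ and $J_\ell = \#\{i \le \ell : X_i = k_n\}$, separating the terms equal to $k_n$ from the others yields
\begin{equation*}
	\Pr{J_\ell = j \mid S_\ell = n} = \binom{\ell}{j} p_n^j (1-p_n)^{\ell-j} \cdot \frac{\Pr{\tilde{S}_{\ell-j} = n - j k_n}}{\Pr{S_\ell = n}},
\end{equation*}
where $\tilde{S}_m$ denotes the sum of $m$ i.i.d.\ copies of $X$ conditioned on $\{X \ne k_n\}$. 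The task therefore reduces to showing that the displayed ratio tends to $1$ uniformly over the relevant range of $(\ell, j)$.

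Since $\Pr{X>x} \sim \frac{c_w}{W(\rho_w)\alpha} x^{-\alpha}$ with $\alpha \in (0,1)$, the variable $X$ lies in the domain of attraction of $X_\alpha(\gamma,1,0)$, and Gnedenko's local limit theorem (already invoked in the proof of Theorem~\ref{te:dilute}) gives $\Pr{S_\ell = n} \sim \ell^{-1/\alpha} f(n / \ell^{1/\alpha})$ uniformly for $\ell \asymp n^\alpha$. The same asymptotic holds for $\tilde{S}$, because deleting a single atom does not perturb the tail of $X$. Since $N_n/n^\alpha$ is tight around the positive random variable $Z$, it suffices to work on events $\{z_1 n^\alpha \le N_n \le z_2 n^\alpha\}$ with $0<z_1<z_2<\infty$. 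On the typical range of $j$ (either $j$ bounded in case~(ii), or $|j - \ell p_n| \le \sqrt{\ell p_n}\log n$ in case~(i)) one has $\ell - j = \ell(1 + o(1))$ and $j k_n = o(n)$, the latter because $j k_n \lesssim \max(k_n, \ell p_n k_n) = O(n^\alpha k_n^{-\alpha}) = o(n)$ in the regimes considered. Plugging these into the LLT asymptotics shows that the ratio converges to $1$, hence
\begin{equation*}
	d_{\mathrm{TV}}\bigl((J_\ell \mid S_\ell = n),\,\mathrm{Bin}(\ell, p_n)\bigr) \to 0
\end{equation*}
uniformly for $\ell$ in any such band.

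For the passage to the limit: in case~(i), $\ell p_n \to \infty$, so the law of large numbers for the Binomial gives $\mathrm{Bin}(\ell,p_n)/(\ell p_n) \convp 1$, and combined with $N_n/n^\alpha \convdis Z$ this yields $\#_{k_n}P_n/(n^\alpha p_n) \convdis Z$. In case~(ii), $\ell p_n = (N_n/n^\alpha)\cdot(n^\alpha p_n) \to \upsilon Z$ in distribution, so Poisson convergence of the Binomial with vanishing success probability, applied conditionally on $Z$ and then integrated, delivers $\#_{k_n}P_n \convdis \mathrm{Poi}(\upsilon Z)$. The main obstacle is the uniformity of the LLT ratio $\Pr{\tilde{S}_{\ell-j}=n-j k_n}/\Pr{S_\ell = n}$ over simultaneously varying $\ell$ and $j$; the refined heavy-tailed local limit estimates of~\cite{zbMATH07179510} that already power Theorem~\ref{te:dilute} are essentially tailor made for delivering this control.
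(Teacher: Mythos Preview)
Your approach is essentially the paper's: condition on $N_n=\ell$, use Kolchin's representation to get the binomial-times-ratio formula, control the ratio $\Pr{\tilde S_{\ell-j}=n-jk_n}/\Pr{S_\ell=n}$ via Gnedenko's local limit theorem, and mix over the limit law of $N_n/n^\alpha$ from Theorem~\ref{te:dilute}. The paper carries out exactly this computation for case~\eqref{eq:refer2}.

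One point deserves care. Your justification ``deleting a single atom does not perturb the tail of $X$'' is only valid when $k_n\to\infty$, so that $p_n\to 0$. When $k_n$ stays bounded (which falls under case~\eqref{eq:refer1}, since then $n^\alpha p_n\asymp n^\alpha\to\infty$), the conditioned variable $\tilde X=(X\mid X\ne k_n)$ has tail $\Pr{\tilde X>x}\sim \Pr{X>x}/(1-p_n)$ with $1-p_n$ bounded away from $1$, so the stable scale parameter for $\tilde S$ genuinely differs from that for $S$. The ratio still tends to $1$, but only because of a scaling cancellation: with $\ell-j\sim\ell(1-p_n)$ one has $(\ell-j)^{1/\alpha}\tilde\gamma\sim\ell^{1/\alpha}\gamma$, and this needs to be argued explicitly rather than dismissed. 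The paper sidesteps this by treating bounded $k_n$ with a crude Chernoff bound on $\Pr{S_\ell=n,\ |J_\ell-\ell p_n|>\delta\ell p_n}\le \Pr{|J_\ell-\ell p_n|>\delta\ell p_n}$ against the denominator $\Pr{S_\ell=n}=\Theta(n^{-1})$, and only invokes the ratio formula once $k_n\to\infty$ (and even then only needs the ratio to be $O(1)$ rather than $\to 1$, combining this with Chernoff on the binomial). Either route works, but your sketch glosses over precisely the step where the two diverge.
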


In the case $n^{\alpha} \Pr{X=k_n} \to \upsilon \in [0, \infty[$, the fact that $\Pr{X=n+1} \sim \Pr{X=n}$ allows us to easily extend the proof of Corollary~\ref{co:poiz} to show that for any finite set $M$ we have 
\begin{align}
\sum_{k \in k_n + M} \#_{k} P_n \convdis \mathrm{Poi}(\upsilon Z  |M|).
\end{align}

Corollary~\ref{co:poiz} entails that when $k_n \gg n^{\frac{\alpha}{1+ \alpha}}$, then $\#_{k_n} P_n \convp 0$. Nevertheless, components  whose size has larger order than  $n^{\frac{\alpha}{1+ \alpha}}$ are still likely to exist. For example, using the Chernoff bounds analogously as for Equation~\eqref{eq:arganalougous} yields that at least  in the case $\frac{n^\alpha \Pr{X \ge k_n}}{\log n} \to \infty$ (which is  equivalent to $\frac{n}{k_n (\log n)^{1/\alpha}} \to \infty$) we have  $\frac{\sum_{k \ge k_n} \#_k P_n}{N_n \Pr{X \ge k_n}} \convp 1$. Thus, by Theorem~\ref{te:dilute} and Slutsky's theorem, we have  in this case
\begin{align}
	\label{eq:sumkconvdisz}
	\frac{\sum_{k \ge k_n} \#_k P_n}{n^\alpha \Pr{X \ge k_n}} \convdis Z.
\end{align}
Consequently, when $k_n'$ is a sequence with $k_n \ll k_n'$ then the contribution of components with size larger than $k_n'$ is negligible in this sum, and $\frac{\sum_{k = k_n}^{k_n'} \#_k P_n}{n^\alpha \Pr{X \ge k_n}} \convdis Z.$
Since $Z>0$ this guarantees the existence of a large number of components whose size is close to $n$.  
The following observation shows us that the maximal component size $K_{(1)} =\max(K_1, \ldots, K_{N_n}) \le n$ indeed has order $n$ and determines the limiting law of the fluctuations. In fact, we obtain a point process limit. 

\begin{corollary}
	\label{co:pointprocess}
	Suppose that the assumptions of Theorem~\ref{te:dilute} hold. Define the following point process on $]0,1]$
		\[
			\Upsilon_n = \sum_{\substack{1 \le i \le N_n \\ K_i > 0}} \delta_{K_i / n},
		\]
		with $\delta$ referring to the Dirac measure. Then
	\begin{align}
		\Upsilon_n \convdis \Upsilon
	\end{align}
	as $n \to \infty$, for a point process $\Upsilon$ on $]0,1]$ with intensity 
	\[
     \frac{x^{-\alpha-1} (1-x)^{\alpha(2-b)-1}}{B(1-\alpha, \alpha(2-b))  }\,\mathrm{d}x.
	\]
	Here $B(\cdot, \cdot)$ denotes Euler's beta function. Almost surely, $\Upsilon$ has infinitely many points.
\end{corollary}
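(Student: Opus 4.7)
The plan is to compute the joint factorial moment measures of $\Upsilon_n$ restricted to compact subsets of $(0, 1]$ and deduce weak convergence from this. The key tool is Kolchin's representation~\eqref{eq:bnb}: conditionally on $N_n = \ell$, the component sizes are distributed as i.i.d.\ copies $(X_1, \ldots, X_\ell)$ (with $\Pr{X = k} \sim (c_w/W(\rho_w)) k^{-a}$ and $a = \alpha + 1$) conditioned on $S_\ell := X_1 + \cdots + X_\ell = n$. Since $\alpha \in (0,1)$, the sum $S_\ell$ lies in the domain of attraction of the stable density $f$ of Theorem~\ref{te:dilute}, and the local limit theorem for stable sums from~\cite{zbMATH07179510} yields the uniform asymptotics $\Pr{S_\ell = k} \sim z^{-1/\alpha} n^{-1} f(x z^{-1/\alpha})$ for $\ell \sim z n^\alpha$ and $k \sim x n$ with $(x, z)$ in compact subsets of $(0, \infty)^2$.

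For a continuous test function $g : (0,1]^k \to [0, \infty)$ with support in $[\epsilon, 1 - \epsilon]^k$ satisfying $\sum_j x_j \le 1 - \epsilon$ throughout, exchangeability and Kolchin's representation yield
\begin{equation*}
\Exb{\sum_{\text{distinct } i_1, \ldots, i_k} g\bigl(\tfrac{K_{i_1}}{n}, \ldots, \tfrac{K_{i_k}}{n}\bigr)} = \sum_\ell \Pr{N_n = \ell}\, \ell^{\underline{k}} \sum_{k_1, \ldots, k_k \ge 1} g\bigl(\tfrac{k_1}{n}, \ldots, \tfrac{k_k}{n}\bigr) \frac{\prod_j \Pr{X = k_j}\, \Pr{S_{\ell - k} = n - \sum_j k_j}}{\Pr{S_\ell = n}}.
\end{equation*}
Inserting the LCLT asymptotics for the two stable-sum probabilities, the tail $\Pr{X = k_j} \sim (c_w / W(\rho_w)) (x_j n)^{-a}$, and the density estimate $\Pr{N_n = \ell} \sim n^{-\alpha} \tilde{f}(\ell/n^\alpha)$ from~\eqref{eq:dilutellt}, the powers of $n$ cancel (using $a = \alpha + 1$) and the Riemann sum converges to
\begin{equation*}
\int_0^\infty \tilde{f}(z)\, z^k \left(\frac{c_w}{W(\rho_w)}\right)^k \int_{(0,1)^k} g(\mathbf{x})\, \prod_j x_j^{-\alpha-1}\, \frac{f\bigl((1 - \sum_j x_j) z^{-1/\alpha}\bigr)}{f(z^{-1/\alpha})}\, d\mathbf{x}\, dz.
\end{equation*}
For $k = 1$, the substitution $u = z^{-1/\alpha}$ reduces the $z$-integral to $\alpha (1-x)^{\alpha(2 - b) - 1}\, \Ex{X_\alpha(\gamma, 1, 0)^{\alpha(b - 2)}}$, and then~\eqref{eq:xalphadilute} together with the beta--gamma identity $B(1 - \alpha, \alpha(2 - b)) = \Gamma(1 - \alpha)\Gamma(\alpha(2 - b)) / \Gamma(1 - \alpha(b - 1))$ recovers the claimed intensity.

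These converging factorial moments determine a unique point process $\Upsilon$ on $(0,1]$, which admits the explicit conditional description, given $Z$ drawn from density $\tilde{f}$, as the jumps of an $\alpha$-stable subordinator on $[0, Z]$ conditioned to reach value $1$ at time $Z$. Weak convergence $\Upsilon_n \convdis \Upsilon$ in the vague topology on point measures on $(0, 1]$ then follows from expanding the Laplace functional $\Ex{\exp(-\Upsilon_n(g))}$ for $g \in C_c((0,1])$ as a power series whose coefficients are the factorial moments, together with the uniform integrability bound $\Ex{\Upsilon_n[\epsilon, 1]} = O_\epsilon(1)$. The a.s.\ infinitude of points follows from $\int_0^\epsilon \lambda(x)\, dx = \infty$ for every $\epsilon > 0$, combined with the conditional stable-bridge interpretation whose jump set is a.s.\ infinite. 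The main technical obstacle is the uniformity required to justify replacing Riemann sums by integrals and exchanging the limit with summation: one needs joint uniform control over $\Pr{S_\ell = n}$ and $\Pr{S_{\ell - k} = n - \sum_j k_j}$ as $\ell / n^\alpha$ varies in compact subsets of $(0, \infty)$ and $\sum_j k_j / n$ in compact subsets of $(0, 1)$, precisely the form of estimate provided by~\cite{zbMATH07179510}; the regime $\sum_j x_j \uparrow 1$ additionally requires a tail truncation to handle the vanishing of $f$ near the origin.
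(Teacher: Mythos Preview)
Your factorial-moment computation and the passage to weak convergence via bounded test functionals is essentially the same route as the paper: both condition on $N_n=\ell$, insert Gnedenko's local limit theorem for $S_\ell$ and $S_{\ell-k}$, use the local limit theorem~\eqref{eq:dilutellt} for $N_n$, and exploit the deterministic bound $\Upsilon_n([\delta,1])\le 1/\delta$ (coming from $\sum_i K_i=n$) to justify the method of moments. The paper then invokes Kallenberg's criterion; your Laplace-functional expansion is equivalent.

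The genuine gap is in your argument for almost-sure infinitude of $\Upsilon$. Divergence of the intensity near $0$ is \emph{not} sufficient on its own: $\Upsilon$ is not Poisson (its factorial moments do not factorize), and a mixture of a zero measure and an infinite process can have infinite intensity yet positive probability of being finite. You patch this by invoking a ``conditional stable-bridge interpretation whose jump set is a.s.\ infinite,'' but that interpretation is asserted, not proved; matching the factorial moments in~\eqref{eq:wellitried} to those of a stable-bridge jump process is a nontrivial identification that you have not carried out. The paper instead argues by contradiction: if $\Pr{\eta_k=0}=\delta_0>0$ for some $k$, it constructs shrinking intervals $I_n=[a_n,b_n]$ with $a_n\to 0$ and $a_n^{-\alpha}-b_n^{-\alpha}=C$, shows by a further factorial-moment computation that $\Upsilon_n(I_n)\convdis\mathrm{Poi}\bigl(\tfrac{c_w C}{\alpha W(\rho_w)}Z\bigr)$, and then lets $C\to\infty$ to force $\Pr{\mathrm{Poi}(\upsilon Z)<k}<\delta_0/2$, a contradiction. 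This step requires real work beyond what you have sketched.
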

\noindent It is remarkable that the limiting process only depends on $\alpha$ and $b$, and not on the asymptotic constant $\frac{c_w}{W(\rho_w)}$. In the proof we use similar arguments as for the result~\cite[Ex. 19.27]{MR2908619} on the balls-in-boxes model with weights belonging to the domain of attraction of a stable law with index in the interval $]1,2[$. In Equation~\eqref{eq:wellitried}, we obtain for any integer $m \ge 1$ and any $0<x\le1$ an expression for the $m$-th factorial moment  $\Ex{(\Upsilon([x,1]))_m}$, which by Equations~\eqref{eq:lambdadilute},~\eqref{eq:xalphadilute} and~\eqref{eq:momzr} and a short calculation simplifies to
\begin{align}
	\Ex{(\Upsilon([x,1]))_m} = \Exb{ \left(\mathrm{Poi}\left( \frac{c_w}{W(\rho_w)}Z\right)\right)_m } \int_x^1 \cdots \int_x^1 \one_{y_1 + \ldots + y_m \le 1} \frac{(1-y_1 - \ldots - y_m)^{\alpha(m+1-b)-1}}{y_1^{\alpha+1}\cdots y_m^{\alpha+1}  } \,\mathrm{d}y_1 \cdots \mathrm{d}y_m,
\end{align}
with
\begin{align*}
	\Exb{ \left(\mathrm{Poi}\left( \frac{c_w}{W(\rho_w)}Z\right)\right)_m } 
	= \left( \frac{ \alpha } {\Gamma(1- \alpha)} \right)^m  \frac{\Gamma(1 + \alpha(1-b)) \Gamma(m + 2-b)}{\Gamma(1 + \alpha(m+1-b)) \Gamma(2-b)}.
\end{align*}

We let $\eta_1 \ge \eta_2 \ge \ldots>0$ denote the ranked points of $\Upsilon$ on $]0,1]$. Since almost surely $\Upsilon(]0,1])=\infty$, there are indeed infinitely many such points.
 By similar arguments as in~\cite[Lem. 4.4]{zbMATH01877115} the point process limit in Corollary~\ref{co:pointprocess} entails that for all integers $k \ge 1$ the size $K_{(k)}$ of the $k$th largest component of $P_n$ satisfies the joint distributional convergence
\begin{align}
	K_{(k)} / n \convdis \eta_k, \qquad k \ge 1 \text{ (jointly)}.
\end{align}
 We have almost surely
\[
\Upsilon([x,1]) \le 1/x,
\]
since the sum of the component sizes in $P_n$ equals $n$. Thus
\[
\eta_k \le 1/k
\]
for all $k \ge 1$. Thus the probability generating function $\Ex{z^{\Upsilon([x,1])}}$ is a polynomial in $z$ of degree at most $\lfloor 1/x \rfloor$, given by
\[
	\Exb{z^{\Upsilon([x,1])}} = \sum_{m=0}^{\lfloor 1/x \rfloor} \frac{1}{m!} \Exb{(\Upsilon([x,1]))_m} (z-1)^m.
\]
The distribution of the ranked point $\eta_k$ is determined by 
\[
\Pr{\eta_k < x} = \Pr{\Upsilon([x,1]) < k} = \sum_{j=0}^{k-1}\frac{1}{j!} \frac{\mathrm{d}^j}{\mathrm{d} z^j} \Ex{z^{\Upsilon([x,1])}} \bigg\vert_{z=0}
\]
for all $0< x \le 1$. 
The integrals in these formulas may be simplified: 

\begin{proposition}
	\label{pro:explicit}
	For all $0<x\le 1$ and all integers $k \ge 1$ we have
	\begin{multline}
		\label{eq:etakkk}
		\Pr{\eta_k < x} =  \frac{ \Gamma(1- \alpha(b-1)) }{ 2\pi |\Gamma(-\alpha)|^{b-1} \Gamma(2-b)}  \\\int_0^\infty \int_{-\infty}^\infty   u^{\alpha(b-1)} \exp\left( -\frac{1}{u^\alpha}\int_I \frac{e^{ityu}}{y^{1+\alpha}}\,\mathrm{d}y -iut -  |\Gamma(-\alpha)| (-it)^\alpha\right)   \sum_{j=0}^{k-1}\frac{1}{j!} \left( \frac{1}{u^\alpha}\int_I \frac{e^{ityu}}{y^{1+\alpha}} \,\mathrm{d}y \right)^j  \,\mathrm{d}t\,\mathrm{d}u.
	\end{multline}
Furthermore, we have almost surely $0< \eta_k <1$. 
\end{proposition}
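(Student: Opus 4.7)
The plan is to identify $\Pr{\eta_k<x}=\Pr{\Upsilon([x,1])<k}$ as a biased probability associated to a positive $\alpha$-stable subordinator. Let $\Pi$ be a Poisson point process on $(0,\infty)$ with intensity $\nu(dy)=y^{-1-\alpha}\,dy$, set $T:=\sum_{\xi\in\Pi}\xi$ (so that $\Ex{e^{-sT}}=e^{-|\Gamma(-\alpha)|s^\alpha}$), and for $u>0$ let $K(u):=\#(\Pi\cap[xu,u])$. The core distributional identity behind \eqref{eq:etakkk} is
\begin{align}
\label{eq:planid}
\Pr{\Upsilon([x,1])<k}=\frac{\Ex{T^{\alpha(b-1)}\,\one_{K(T)<k}}}{\Ex{T^{\alpha(b-1)}}},
\end{align}
which says that under the tilt $dP\mapsto T^{\alpha(b-1)}dP/\Ex{T^{\alpha(b-1)}}$ the random variable $K(T)$ has the same law as $\Upsilon([x,1])$.

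To prove \eqref{eq:planid} I would match factorial moments, which suffices since both variables are deterministically bounded by $\lfloor 1/x\rfloor$. Applying the multivariate Campbell--Mecke formula to $T^{\alpha(b-1)}(K(T))_m$ and performing the substitution $u=T'+\sum_j y_j$ followed by the scaling $y_j=uw_j$ decouples the $u$-integration and yields
\[
\Ex{T^{\alpha(b-1)}(K(T))_m}=\Ex{T^{\alpha(b-1-m)}}\int_{[x,1]^m}\one_{\sum w_j\le 1}(1-\textstyle\sum w_j)^{\alpha(m+1-b)-1}\prod_j w_j^{-1-\alpha}\,dw.
\]
Combining this with the Mellin identity $\Ex{T^r}=|\Gamma(-\alpha)|^{r/\alpha}\Gamma(1-r/\alpha)/\Gamma(1-r)$ (valid for $r<\alpha$, which holds since $b<2$) and the explicit expression for $\Ex{(\mathrm{Poi}(c_wZ/W(\rho_w)))_m}$ displayed immediately before the proposition, one checks that $\Ex{T^{\alpha(b-1-m)}}/\Ex{T^{\alpha(b-1)}}$ equals $\Ex{(\mathrm{Poi}(c_wZ/W(\rho_w)))_m}$, so the $m$-th factorial moment of the right-hand side of \eqref{eq:planid} matches the formula for $\Ex{(\Upsilon([x,1]))_m}$ recalled in the paper.

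The integral representation \eqref{eq:etakkk} then follows by Fourier inversion. For fixed $u$ the event $\{K(u)<k\}$ is measurable with respect to $\Pi\cap[xu,u]$, which is independent of its complement; a direct L\'evy--Khintchine computation using the Poisson decomposition gives
\[
\Ex{e^{itT}\one_{K(u)<k}}=\exp\!\Big(-|\Gamma(-\alpha)|(-it)^\alpha-\tfrac{1}{u^\alpha}\!\int_x^1\!\tfrac{e^{ityu}}{y^{1+\alpha}}dy\Big)\sum_{j=0}^{k-1}\tfrac{1}{j!}\Big(\tfrac{1}{u^\alpha}\!\int_x^1\!\tfrac{e^{ityu}}{y^{1+\alpha}}dy\Big)^{\!j}
\]
after the substitution $z=yu$ in the inner integrals. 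Fourier inversion recovers the density $\rho(u)$ of $T$ on the event $\{K(u)<k\}$, so $\Ex{T^{\alpha(b-1)}\one_{K(T)<k}}=\int_0^\infty u^{\alpha(b-1)}\rho(u)\,du$, and dividing by $\Ex{T^{\alpha(b-1)}}=|\Gamma(-\alpha)|^{b-1}\Gamma(2-b)/\Gamma(1-\alpha(b-1))$ produces precisely the constant in front of the integral in \eqref{eq:etakkk}.

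The assertion $0<\eta_k<1$ almost surely is short: Corollary~\ref{co:pointprocess} guarantees $\Upsilon(]0,1])=\infty$ a.s., hence $\eta_k>0$ for every $k\ge 1$; the deterministic bound $\eta_k\le 1/k$ noted in the paper gives $\eta_k<1$ for $k\ge 2$, while for $k=1$ the bound $\eta_1\le 1-\eta_2<1$ uses that $\eta_2>0$ a.s. The chief technical obstacle will be the Fubini and Fourier-inversion justifications. I plan to use the polynomial tail and stretched-exponential decay near $0$ of the positive $\alpha$-stable density $f_T$, together with the hypothesis $b<2$ (hence $\alpha(b-1)<\alpha$), to verify absolute integrability of $u^{\alpha(b-1)}\rho(u)$ and of the full $(t,u)$-integrand, which legitimises the interchanges and the use of Fourier inversion on the sub-probability density $\rho$.
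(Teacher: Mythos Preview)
Your approach is correct and genuinely different from the paper's. The paper works purely computationally: it starts from the factorial-moment formula \eqref{eq:wellitried} already established in the proof of Corollary~\ref{co:pointprocess}, substitutes the Fourier inversion formula for the stable density $f$, sums over $m$ to obtain the probability generating function $\Ex{z^{\Upsilon(I)}}$, and then differentiates at $z=0$. Your route instead uncovers a probabilistic representation: under the $T^{\alpha(b-1)}$-tilt, the rescaled atoms $\{\xi/T:\xi\in\Pi\}$ of a stable subordinator realise $\Upsilon$. This is more illuminating --- it explains structurally why the formula takes the shape it does and why the constants $c_w/W(\rho_w)$ disappear --- whereas the paper's derivation is shorter because it re-uses the factorial moments verbatim without re-deriving them via Campbell--Mecke.

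One caveat: your stated plan to ``verify absolute integrability of $u^{\alpha(b-1)}\rho(u)$ and of the full $(t,u)$-integrand'' will not go through for the latter. As the paper remarks just after the statement of Proposition~\ref{pro:explicit}, the order of the two outer integrals cannot be interchanged: as $u\to\infty$ the modulus of the integrand behaves like $u^{\alpha(b-1)}$ with $\alpha(b-1)\in(-1,0)$, which is not integrable. Fortunately your argument does not actually need Fubini. You first perform the $t$-integral (absolutely convergent for each fixed $u$, since $|\phi_u(t)|$ is dominated by a polynomial times $\exp(-|\Gamma(-\alpha)|\cos(\pi\alpha/2)|t|^\alpha)$) to obtain the genuine sub-density $\rho(u)\le f_T(u)$, and then the $u$-integral $\int u^{\alpha(b-1)}\rho(u)\,du\le \Ex{T^{\alpha(b-1)}}<\infty$ converges absolutely. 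So simply drop the claim about the joint $(t,u)$-integrand and present the formula as the iterated integral it is; your probabilistic setup makes this order of integration natural. Your argument for $\eta_1<1$ via $\eta_1+\eta_2\le 1$ is fine (and more explicit than the paper, which leaves this point implicit).
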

The proof is detailed in Section~\ref{sec:proofdilute}, alongside the other proofs for the dilute regime. The calculations  are similar to those in~\cite{zbMATH00563567} and~\cite[Rem. 19.28]{MR2908619}. Note that we may not change the order of integration  of the two outer integrals in this expression.

As a final remark, if $k_n$ is an arbitrary  sequence of integers satisfying $n^{\frac{\alpha}{1+\alpha}} \ll k_n \ll n$, then by Equation~\eqref{eq:intermediate} below we may for each constant $C>0$ choose a sequence $k_n' \sim k_n$ of integers with $(n/k_n)^\alpha - (n/k_n')^\alpha \to C$ and
\begin{align}
	\sum_{k=k_n}^{k_n'} \#_k P_n \convdis \mathrm{Poi}\left(\frac{C}{\alpha} \frac{c_w}{W(\rho_w)} Z\right).
\end{align}
Together with Equation~\eqref{eq:sumkconvdisz} and subsequent remarks, this describes the component sizes between the scales $n^{\frac{\alpha}{1+\alpha}}$ and $n$.

\subsection{Product structures and extended composition schemes}

\label{sec:product}

Under the assumptions of Theorem~\ref{sec:convergent} the Gibbs partition of the  composition schema $U(z) = W(z)^2$ admits a mixture limiting behaviour: with asymptotic probability $1/2$ the first coordinate is large and the second converges to a finite limit distribution, and on the complementary event the second is large and the first converges to a finite limit. This may be generalized to the asymptotic behaviour of product structures with partition function generating series of the form 
\begin{align}
	O(z) = \prod_{k=1}^\ell W_k(z)
\end{align} for some $\ell \ge 2$ and generating series $W_k(z)$, $1 \le k \le \ell$. We let $\rho_o$ denote the radius of convergence of $O(z)$. For any integer $n \ge 0$ with $o_n = [z^n] O(z)>0$ we may consider the random vector
$(P_1, \ldots, P_\ell)$ of non-negative integers with probability generating function
\begin{align}
	\Ex{z_1^{P_1} \cdots z_\ell^{P_\ell}} = \frac{ [z^n] \prod_{k=1}^\ell W_k( z_kz) }{o_n }.
\end{align}

For ease of notation, we say a sequence $(r_n)_{n \ge 0}$ of non-negative real numbers satisfies the \emph{subexponentiality} condition, if the associated power series $r(z) = \sum_{n \ge 0} r_n z^n$ has a radius of convergence $\rho_r \in ]0, \infty[$, and if
\[
\frac{r_n}{r_{n+1}} \to \rho_r \qquad \text{and} \qquad \frac{1}{r_n} \sum_{i+j=n} r_i r_j \to 2 r(\rho_r)<\infty
\]
as $n \to \infty$. Note that these conditions are automatically satisfied if $r_n = L_r(n) n^{-c}\rho_r^{-n}$ for constants $c>1$, $\rho_r>0$, and a slowly varying function $L_r$.

\begin{lemma}
	\label{te:extended}
	Suppose that there is a sequence $(r_n)_{n \ge 0}$ satisfying the subexponentiality condition, such that the limit
	\begin{align*}
		\lim_{n \to \infty} \frac{w_n^{(k)}}{r_n} \in [0, \infty[
	\end{align*}
	exists for each $1 \le k \le \ell$, and is positive for at least one $1 \le k \le \ell$. Then $(o_n)_{n \ge 0}$ satisfies the subexponentiality condition and the limit
	\[
	p_k = \lim_{n \to \infty} \frac{w_n^{(k)} O(\rho_o)}{o_n W_k(\rho_o)} \in [0,1]
	\]
	exists for all $1 \le \ell \le k$. The limiting constants $p_1, \ldots, p_\ell$ satisfy $\sum_{k=1}^\ell p_k = 1$, allowing us to define a random integer $1 \le I \le \ell$ with
	\[
	\Pr{I = k} = p_k, \qquad 1 \le k \le \ell.
	\]
	For each $1 \le k \le \ell$ let $X^{(k)}$ denote the random non-negative integer with probability generating function
	\[
	\Exb{z^{X^{(k)}}} = \frac{W_k(z\rho_w^{(k)}  )}{W_k(\rho_w^{(k)}  ) }.
	\]
	Let
	\[
	\Lambda_n = \left(X^{(1)}, \ldots, X^{(I-1)}, n - \sum_{k=1}^\ell X^{(k)}, X^{(I)} \ldots, X^{(\ell)}\right).
	\]
	Then
	\begin{align}
		\label{eq:tvapproxproduct}
		\lim_{n \to \infty} \sup_{A \subset \ndN^\ell} \left| \Prb{(P_1, \ldots, P_{\ell}) \in A} - \Prb{\Lambda_n \in A}  \right| = 0.
	\end{align}
\end{lemma}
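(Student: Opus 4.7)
The plan is to argue by induction on $\ell$, with the main work in the base case $\ell = 2$ -- the generating-series analogue of the ``at most one big jump'' principle that underlies Theorem~\ref{te:convergent}~ii). Write $c_k = \lim_n w_n^{(k)}/r_n \in [0,\infty)$, $\tilde r_n = r_n \rho_o^n$, and $\tilde w_n^{(k)} = w_n^{(k)} \rho_o^n \sim c_k \tilde r_n$; the subexponentiality hypothesis on $(r_n)$ makes $\tilde r_n/r(\rho_o)$ a subexponential probability distribution in the classical sense, and in particular $W_k(\rho_o) < \infty$ whenever $c_k > 0$.

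First I would establish the asymptotics of $o_n$. For $\ell = 2$ the standard subexponential-convolution asymptotic applied to $\tilde w^{(1)}, \tilde w^{(2)}$ yields $o_n \sim r_n(c_1 W_2(\rho_o) + c_2 W_1(\rho_o))$, and the closure of the subexponential class under convolution implies that $(o_n)$ itself satisfies the subexponentiality condition. Induction on $\ell$ gives $o_n \sim r_n \cdot O(\rho_o) \sum_{k=1}^\ell c_k/W_k(\rho_o)$, and substituting this into the definition of $p_k$ yields $p_k = (c_k/W_k(\rho_o))/\sum_j (c_j/W_j(\rho_o))$, so $\sum_k p_k = 1$ and the mixture index $I$ is well-defined.

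The probabilistic core is the ``unique big jump'' statement: for a suitable sequence $A_n \to \infty$ with $A_n = o(n)$,
\[
\sum_{\substack{n_1 + \cdots + n_\ell = n \\ \#\{k\,:\,n_k \ge A_n\} \ge 2}} \prod_{m=1}^\ell w_{n_m}^{(m)} = o(o_n).
\]
For $\ell = 2$ this is immediate from the asymptotic of $o_n$: the contribution with $n_1 < A_n$ already tends to $c_2 W_1(\rho_o) r_n$ (using $w_{n-n_1}^{(2)} \sim c_2 r_n \rho_o^{n_1}$, a consequence of $r_n/r_{n+1} \to \rho_o$), by symmetry the same holds for $n_2 < A_n$, and these two one-big contributions together match the full asymptotic of $o_n$, leaving $o(o_n)$ for the two-big region. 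For general $\ell$, apply the $\ell = 2$ bound to the decomposition $O(z) = O_{\ell-1}(z)\,W_\ell(z)$ and combine with the induction hypothesis on $O_{\ell-1}$, whose coefficients are subexponential by the previous paragraph.

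On the complementary event that exactly one coordinate $n_k$ exceeds $A_n$ (so $n_k \ge n - (\ell-1)A_n$ and all other $n_j < A_n$), the density ratio
\[
\frac{\Prb{(P_1,\ldots,P_\ell) = (n_1,\ldots,n_\ell)}}{p_k \prod_{j \ne k} \Prb{X^{(j)} = n_j}} = \frac{w_{n_k}^{(k)}\,O(\rho_o)/W_k(\rho_o)}{o_n \cdot p_k \cdot \rho_o^{n-n_k}}
\]
tends to $1$ uniformly in $(n_j)_{j\ne k}$, using $w_{n_k}^{(k)} \sim c_k r_{n_k}$ together with $r_{n_k} \sim r_n \rho_o^{n-n_k}$ (which amounts to the long-tailedness of $\tilde r_n$). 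The mixture $\Lambda_n$ admits the analogous partition over the choice of large coordinate $I$, with the small coordinates i.i.d.\ $X^{(j)}$ in the limit. Summing the pointwise ratio estimate in $L^1$ and absorbing the negligible two-big region yields~\eqref{eq:tvapproxproduct}. The main obstacle is this concentration step in the required uniformity: the double-big mass must be controlled strongly enough to give total variation (not merely weak) approximation, which forces one to invoke both the ratio condition and the convolution condition of subexponentiality simultaneously and to propagate subexponentiality through the partial products $\prod_{k \le j} W_k(z)$ so that the induction actually closes; a secondary technicality is uniformity of the long-tail asymptotic $\tilde r_{n-m}/\tilde r_n \to 1$ on the diverging window $m \le (\ell-1)A_n$, handled by combining summability of $\tilde r_n$ with its pointwise shift-asymptotic.
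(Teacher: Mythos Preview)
Your approach is correct and essentially identical to the paper's: both reduce to $\ell=2$ by induction, use the subexponential convolution asymptotic to obtain $o_n \sim r_n(c_1 W_2(\rho_o)+c_2 W_1(\rho_o))$ and the subexponentiality of $(o_n)$, and then read off the one-big-jump mixture structure. The only real difference is packaging: the paper simply cites \cite[Lem.~4.9]{MR3097424} for the convolution step and then argues pointwise via $\Pr{P_2=k}\to p_1\Pr{A_2=k}$ (which, since the limit is a probability distribution, already gives total variation), whereas you unpack the subexponential machinery by hand and introduce an explicit truncation level $A_n$ to isolate the unique-big region.
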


The proof of Lemma~\ref{te:extended} uses the same approach as in~\cite{zbMATH06964621}.  We make the statement explicit since it is very useful in this context: In all settings considered in the present work  the partition function $(u_n)_{n \ge 0}$  satisfies the subexponentiality condition. This is detailed in the proof of each theorem, since the partition function $u_n$ is a constant multiple of the local probability $\Pr{S_N = n}$ of  a  randomly stopped sum $S_N$. Hence the asymptotic shape of product structures established in Lemma~\ref{te:extended} enables us to describe the asymptotic shape of so-called extended composition schemes without the need of case distinctions:

\begin{corollary}
	\label{co:extended}
	Given a power series $H(z) = \sum_{n \ge 0} h_n z^n$ with non-negative coefficients we might be interested in the random number $\tilde{N}_n$ and sizes $\tilde{K}_1, \ldots, \tilde{K}_{\tilde{N}_n}$ of $W$-components in the composition scheme $H(z)V(W(z))$, described by the probability generating function
	\[
	\Exb{x^{\tilde{N}_n} \prod_{i \ge 1} z_i^{\tilde{K}_i}} = \frac{[z^n] H(z) \sum_{\ell \ge 0} v_\ell \left(x \sum_{k \ge 0} w_k z_k z^k\right)^\ell }{[z^n]H(z)V(W(z))}.
	\]
	With $U(z) = V(W(z))$ as before, suppose that $V(z)$ and $W(z)$ satisfy the conditions of Theorem~\ref{te:dense} or  Theorem~\ref{te:mixture} or  Theorem~\ref{te:dilute}. Then, by Lemma~\ref{te:extended}, we have:
	\begin{enumerate}
		\item If $\lim_{n \to \infty} h_n / u_n = 0$, then the statements of Theorem~\ref{te:dense} / Theorem~\ref{te:mixture} / Theorem~\ref{te:dilute} / Corollary~\ref{co:poiz} / Corollary~\ref{co:pointprocess} for $N_n, K_1, \ldots, K_{N_n}$ hold analogously for $\tilde{N}_n, \tilde{K}_1, \ldots, \tilde{K}_{\tilde{N}_n}$. Here  Equation~\eqref{eq:convergentrephrase}  needs to be used instead of~\eqref{eq:tvapprox} to describe the asymptotic behaviour in  the convergent phase, since $\tilde{K}_1, \ldots, \tilde{K}_{\tilde{N}_n}$ need not sum up to $n$. Analogously for~\eqref{eq:mixturetvapprox}.
		\item  If $\lim_{n \to \infty} h_n / u_n = \infty$ and $(h_n)_{n \ge 0}$ satisfies the subexponentiality condition,
		then 
		\[
		(\tilde{N}_n, \tilde{K}_1, \ldots, \tilde{K}_{\tilde{N}_n}) \convdis (N', K_1', \ldots, K'_{N'})
		\]
		for a finite limit with  probability generating function
		\[
		\Exb{x^{N'} \prod_{i \ge 1} z_i^{K_i'}} = \frac{ \sum_{\ell \ge 0} v_\ell \left(x \sum_{k \ge 0} w_k z_k \rho_h^k\right)^\ell }{V(W(\rho_h))}.
		\]
		\item If $q := \lim_{n \to \infty} h_n / u_n \in ]0, \infty[$ exists, then the limiting behaviour is a mixture of the two previous cases, with the asymptotic probability for the first case  given by $q/(1+q)$, and for the second case by $1 / (1+q)$.
	\end{enumerate}  
\end{corollary}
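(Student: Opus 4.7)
The plan is to derive all three cases as consequences of Lemma~\ref{te:extended} applied to the two-factor product $O(z) = H(z) U(z)$ with $U(z) = V(W(z))$. A preliminary step is to check that $u_n$ satisfies the subexponentiality condition in each regime covered by Theorems~\ref{te:dense}, \ref{te:mixture}, and~\ref{te:dilute}: the proofs of those theorems identify $u_n$ (up to the factor $\rho_u^{-n}$) with a constant multiple of the local probability $\Pr{S_N = n}$ of a randomly stopped sum, and the corresponding local limit theorems yield $u_n \sim L_u(n) n^{-c} \rho_u^{-n}$ with $c > 1$, which is enough for subexponentiality.

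Next, I would choose the dominating sequence of Lemma~\ref{te:extended} to be $r_n = u_n$ when $h_n/u_n \to 0$ or $\to q \in \,]0,\infty[\,$, and $r_n = h_n$ when $h_n/u_n \to \infty$. The corresponding limits are $p_1 = 0$, $p_1 = q/(1+q)$, or $p_1 = 1$, giving a random index $I \in \{1,2\}$ with the appropriate case-dependent distribution. Lemma~\ref{te:extended} then provides the total variation approximation $(P_1,P_2) \atv \Lambda_n$ for the random split of $n$ between the two factors, in which on $\{I=2\}$ the $V(W)$-factor carries size $n - X^{(1)}$ with $X^{(1)} = O_p(1)$ a tilted $H$-Boltzmann variable, and on $\{I=1\}$ the $V(W)$-factor carries the finite size $X^{(2)}$ with pgf $U(\rho_h z)/U(\rho_h)$. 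The crucial structural observation is that, conditional on $P_2 = m$, the tuple $(\tilde N_n, \tilde K_1, \ldots, \tilde K_{\tilde N_n})$ is distributed exactly as $(N_m, K_1, \ldots, K_{N_m})$ for the ordinary Gibbs partition of size $m$ associated with $(V,W)$.

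On the event $\{I=2\}$ relevant to Cases 1 and 3, the underlying partition has size $n - X^{(1)}$ with $X^{(1)} = O_p(1)$, so all statements of Theorems~\ref{te:dense}/\ref{te:mixture}/\ref{te:dilute} transfer by a Slutsky-type argument: the centring $n/\mu$ shifts by $O_p(1)$, which is negligible compared to $L(n)n^{1/\alpha}$, and the scales $L(n)n^{1/\alpha}$ and $n^\alpha$ are unchanged up to a factor tending to $1$ in probability, because $L$ is slowly varying and $(n-X^{(1)})/n \to 1$. The same reasoning transfers Corollary~\ref{co:poiz} and Corollary~\ref{co:pointprocess}. On the event $\{I=1\}$ relevant to Cases 2 and 3, a direct generating-function computation verifies that mixing the joint pgf of $(N_m, K_1, \ldots, K_{N_m})$ with weight proportional to $u_m \rho_h^m / U(\rho_h)$ over $m \ge 0$ collapses to the stated finite limit with pgf $\sum_{\ell \ge 0} v_\ell \bigl(x \sum_{k \ge 0} w_k z_k \rho_h^k\bigr)^\ell / V(W(\rho_h))$.

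The main obstacle will be the convergent part of the mixture regime (and the corresponding sub-event inside Cases 1 and 3 more generally), for which the component sizes $\tilde K_1, \ldots, \tilde K_{\tilde N_n}$ no longer sum to $n$ but only to $n - X^{(1)}$. This is the content of the parenthetical remark in the statement that~\eqref{eq:convergentrephrase} should be used in place of~\eqref{eq:tvapprox}: the joint law of the partition after removing the giant component is insensitive to whether the total is $n$ or $n - X^{(1)}$, so the bounded extra shift is absorbed into the giant component and the non-maximal fragments retain the same limit law. An analogous minor adjustment suffices for Corollary~\ref{co:pointprocess}, where the rescaling factor can be taken to be $n$ or $n-X^{(1)}$ interchangeably without affecting the intensity of the limiting point process.
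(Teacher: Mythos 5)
Your route is the same as the paper's: Corollary~\ref{co:extended} is obtained precisely by applying Lemma~\ref{te:extended} to the two-factor product $H(z)\cdot U(z)$ with $U(z)=V(W(z))$, after noting that $(u_n)_{n\ge0}$ satisfies the subexponentiality condition in each regime because $u_n\rho_u^n$ is a constant multiple of $\Pr{S_N=n}$, which the proofs of Theorems~\ref{te:dense}, \ref{te:mixture}, \ref{te:dilute} show to be regularly varying with index strictly below $-1$. The details you add --- the factorization of the law of $(\tilde{N}_n,\tilde{K}_1,\ldots)$ conditionally on the size $P_2=m$ allotted to the $V(W)$-factor, the Slutsky/uniformity argument absorbing the $O_p(1)$ size shift into the scalings $L(n)n^{1/\alpha}$ and $n^{\alpha}$, the Boltzmann generating-function computation in Case~2, and the explanation of why \eqref{eq:convergentrephrase} replaces \eqref{eq:tvapprox} --- are correct and are exactly what the paper leaves implicit.

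The one step that does not follow as you state it is the identification of the mixture weights in Case~3. With factor $1=H$ and factor $2=U$, Lemma~\ref{te:extended} gives $p_1=\lim_{n\to\infty} h_n\,U(\rho_o)/o_n$, and since $o_n\sim H(\rho_u)u_n+U(\rho_u)h_n$ (here $\rho_o=\rho_u$), the weight of the event that the $H$-factor is macroscopic is $\frac{q\,V(W(\rho_u))}{H(\rho_u)+q\,V(W(\rho_u))}$, while the event that the $V(W)$-factor is macroscopic (so that the dense/mixture/dilute statements transfer) has weight $\frac{H(\rho_u)}{H(\rho_u)+q\,V(W(\rho_u))}$. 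The bare ratio $q=\lim h_n/u_n$ determines these weights only after absorbing the Boltzmann constants $H(\rho_u)$ and $V(W(\rho_u))$, so your claim $p_1=q/(1+q)$ is not what the lemma yields (it holds only when $H(\rho_u)=V(W(\rho_u))$). Keep the orientation consistent with Cases~1 and~2 as well: the weight of the first-case behaviour must tend to $1$ as $q\to0$ and to $0$ as $q\to\infty$, i.e.\ it is the weight proportional to $H(\rho_u)u_n$; your attribution of $q/(1+q)$ to the $H$-macroscopic event is the opposite of the corollary's wording, so this bookkeeping should be redone explicitly from the displayed formula for $p_k$ in Lemma~\ref{te:extended} rather than asserted.
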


By the same approach, we may deduce multi-variate limit laws for composition schemes of the form $\prod_{k=1}^\ell V_k(W_k(z))$ for $\ell$ constant.  We omit the straight-forward details.

\section{Proofs of main results}

\label{sec:proofs}

\subsection{The dense case}

Kolchin's representation theorem~\cite{MR865130,zbMATH02214057} builds a connection from Gibbs partitions to randomly stopped random walks in the case $\rho_u >0$.

\begin{proposition}[{ \cite[Thm. 1.2]{MR2245368}}]
	\label{pro:kolchin}
	Let $r >0$ denote a real number such that $0<U(r) < \infty$. Let $X$ and $N$ be a random non-negative integers with probability generating functions
	\begin{align}
		\label{eq:xn}
		\Ex{z^X} = \frac{W(rz)}{W(r)}, \qquad \text{and} \qquad 
		\Ex{z^N} = \frac{V(W(r)z)}{U(r)}.
	\end{align}
	Let $X_1, X_2, \ldots$ denote independent copies of $X$, that are also independent from $N$. Then for each integer $n \ge 1$ with $u_n>0$
	\[
	(K_1, \ldots, K_{N_n}) \eqdist \left( (X_1, \ldots, X_N) \,\,\Big\vert\,\, \sum_{i=1}^N X_i = n\right).
	\]
\end{proposition}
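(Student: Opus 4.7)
The plan is to directly compare the joint probabilities that each side assigns to any fixed ordered tuple $(k_1,\ldots,k_m)$ of positive integers with $k_1+\cdots+k_m=n$. Both random sequences are exchangeable: the right-hand side trivially, because $X_1,X_2,\ldots$ are i.i.d.\ and independent of $N$; and the left-hand side because $u(P)$ depends only on the multiset of block sizes, so the Gibbs law on $\mathrm{Part}([n])$ is invariant under the natural $S_n$-action on the labels, which realizes every permutation of the blocks. Hence it suffices to match the two probabilities on a single representative per multiset type.

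For the Gibbs side, write $n_i=|\{j:k_j=i\}|$, so that $\sum_i n_i=m$ and $\sum_i i\,n_i=n$, and invoke the counting identity recorded in Section~\ref{sec:gibbs},
\[
\Prb{(\#_i P_n)_{i\geq 1}=(n_i)_{i\geq 1}}=\frac{m!\,v_m}{u_n}\prod_{i\geq 1}\frac{w_i^{n_i}}{n_i!}.
\]
Since exactly $m!/\prod_i n_i!$ ordered tuples share this multiset type, exchangeability forces each such tuple to have probability $v_m\prod_{j=1}^m w_{k_j}/u_n$. For the i.i.d.\ side, the probability generating functions in~\eqref{eq:xn} give $\Pr{N=m}=v_m W(r)^m/U(r)$ and $\Pr{X=k}=r^k w_k/W(r)$, whence
\[
\Prb{N=m,\,X_1=k_1,\ldots,X_m=k_m}=\frac{v_m\,r^n\prod_{j=1}^m w_{k_j}}{U(r)}.
\]
By composition of probability generating functions, $\Exb{z^{\sum_{i=1}^N X_i}}=V(W(rz))/U(r)=U(rz)/U(r)$, so $\Pr{\sum_{i=1}^N X_i=n}=r^n u_n/U(r)$. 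Dividing produces $v_m\prod_{j=1}^m w_{k_j}/u_n$, matching the Gibbs expression (and, reassuringly, independent of the tilting parameter $r$).

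I do not anticipate any substantive obstacle; the single point that deserves care is the exchangeability of $(K_1,\ldots,K_{N_n})$, since the canonical ordering by minimum element is not literally symmetric in the labels. Once one observes that any permutation of the blocks can be effected by a suitable relabeling in $S_n$, and that the Gibbs measure on $\mathrm{Part}([n])$ is $S_n$-invariant, exchangeability follows and the matching of a single representative per multiset determines the full joint law.
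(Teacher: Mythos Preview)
The paper does not give its own proof of this proposition; it is quoted from Pitman \cite[Thm.~1.2]{MR2245368} and used as a black box. Your direct computation---matching the probability of each ordered tuple on both sides via the multiset formula from Section~\ref{sec:gibbs} and the explicit densities coming from~\eqref{eq:xn}---is the standard argument and is correct.

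One cosmetic remark: you restrict to tuples of \emph{positive} integers, but the paper's setup has $W(x)=\sum_{i\ge 0}w_i x^i$ and the displayed formula in Section~\ref{sec:gibbs} is indexed over $i\ge 0$; later the paper even writes sums restricted to $K_i>0$, suggesting size-zero components are permitted when $w_0>0$. Nothing in your calculation uses positivity---$\Pr{X=k}=r^k w_k/W(r)$ and the cancellation work verbatim for $k\ge 0$---so simply replacing ``positive integers'' by ``non-negative integers'' covers this case as well.
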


With this connection at hand, we are ready to prove the first main theorem.

\begin{proof}[Proof of Theorem~\ref{te:dense}]	
	The assumptions on the coefficients allow us to apply Proposition~\ref{pro:kolchin} for $r = \rho_u$, yielding 
	\begin{align}
		\label{eq:proko}
		(K_1, \ldots, K_{N_n}) \eqdist \left((X_1, \ldots, X_N) \,\,\Big\vert\,\, S_N = n\right).
	\end{align}
	Here $N, X$ denote independent random variables defined in  Equation~\eqref{eq:xn} for our choice  $r = \rho_u$, $(X_i)_{i \ge 1}$ denote independent copies of $X$, and
	$
	S_n = \sum_{i=1}^n X_i
	$
	for all integers $n \ge 0$. 
	
	Clearly the local limit theorem in Equation~\eqref{eq:densellt} implies the central limit theorem in Equation~\eqref{eq:denseclt}. Hence we are going to study the local probabilities.  Let $g(n)$ and $L(n)$ be defined as in Equations~\eqref{eq:defofg} and ~\eqref{eq:defofl1}. Let $\gamma$ be defined as in Equation~\eqref{eq:defofgamma}. Let $f$ denote the density function of the stable distribution $S_\alpha(\gamma, 1, 0)$.  Gnedenko's local limit theorem~\cite[Thm. 4.2.1]{MR0322926} in the lattice case states that
	\begin{align}
		\label{eq:tau}
		\tau_n = \sup_{k \in \ndZ} \left|g(n) n^{1/\alpha} \Pr{S_n = k} - f\left( \frac{k - n\mu}{g(n) n^{1/\alpha}} \right) \right|
	\end{align}
	with $\mu = \Ex{X}$ satisfies
	\begin{align}
		\label{eq:locallimit}
		\lim_{n \to \infty} \tau_n = 0.
	\end{align}
	By~\cite[Thm. 1, (ii), (iii)]{zbMATH07179510} and~\cite[Thm. 1.1]{zbMATH00902764} it holds that
	\begin{align}
		\label{eq:stoppedsum}
		\Pr{S_N = n} \sim \mu^{-1} \Pr{N= \lfloor n/\mu \rfloor }
	\end{align}
	as $n \to \infty$. 
	
	Let $\Omega \subset ]0,\infty[$ denote a compact subset.
	Using~\eqref{eq:locallimit},~\eqref{eq:stoppedsum}, and our assumptions on the coefficients of $V(z)$ it follows that  uniformly for all integers $\ell$ with \begin{align}
		\label{eq:rangeofl}
		x := \frac{\ell -  n/\mu}{L(n) n^{1/\alpha}} \in \Omega
	\end{align}
	it holds that
	\begin{align}
		\label{eq:t11simplifyme}
		\Pr{N_n = \ell} &= \Pr{N= \ell \mid S_N = n} \\
		&= \frac{ \Pr{N = \ell } \Pr{S_\ell = n}}{\Pr{S_N = n }} \nonumber \\
		&= \frac{ \Pr{N = \ell}}{\Pr{N= \lfloor n/\mu \rfloor }}\frac{1}{\mu^{-1} g(\ell) \ell^{1/\alpha}} \left( f\left( \frac{n - \ell \mu}{g(\ell) \ell^{1/\alpha}} \right) + o(1) \right). \nonumber
	\end{align}
	By standard properties of regularly varying functions~\cite[Thm. 1.2.4]{mikosch1999regular}, it follows that
	\begin{align}
		\label{eq:tictic}
		\frac{ \Pr{N = \ell}}{\Pr{N= \lfloor n/\mu \rfloor }} \sim 1,
	\end{align}
	and
	\begin{align}
		\label{eq:toctoc}
		\frac{1}{\mu^{-1} g(\ell) \ell^{1/\alpha}} \sim \frac{1}{L(n) n^{1/\alpha}},
	\end{align}
	and
	\[
	\frac{n - \ell \mu}{g(\ell) \ell^{1/\alpha}} \sim -x
	\]
	as $n \to \infty$, uniformly for all integers $\ell$ satisfying~\eqref{eq:rangeofl}. The density function $f$ is uniformly continuous, bounded, and positive. Hence
	\[
	f\left( \frac{n - \ell \mu}{g(\ell) \ell^{1/\alpha}} \right) \sim f\left(-x \right),
	\]
	and Equation~\eqref{eq:t11simplifyme} simplifies to
	\[
	\Pr{N = \ell} = \frac{1}{L(n) n^{1/\alpha}}  \left(f(-x) + o(1) \right).
	\]
	Since $X_\alpha(\gamma, -1, 0) \eqdist - X_\alpha(\gamma, 1, 0)$, we have $f(-x) = h(x)$. 
	
	In order to verify~Equation~\eqref{eq:densellt} it remains to treat the case $\delta n \le  \ell $ with  $|x| \to \infty$ as $n \to \infty$. Since $f(x) \to 0$ in this case, we have to show that
	\begin{align}
		\label{eq:ulttoshow}
		L(n)n^{1/\alpha}\Pr{N_n = \ell} \to 0.
	\end{align}
	If $\ell = O(n)$, then~\eqref{eq:ulttoshow} follows readily from~\eqref{eq:t11simplifyme}, since~\eqref{eq:tictic} still holds by~\cite[Thm. 1.2.4]{mikosch1999regular}, and although~\eqref{eq:toctoc} needs not hold we still have
	\[
	\frac{L(n) n^{1/\alpha}}{\mu^{-1} g(\ell) \ell^{1/\alpha}} = O(1).
	\]
	
	It remains to treat the case  $\ell= t_n n$ for some sequence $t_n$ satisfying $t_n \to \infty$. By the Potter bounds it follows that for each $\epsilon>0$ we have 
	\begin{align*}
		\frac{ \Pr{N = \ell}}{\Pr{N= \lfloor n/\mu \rfloor }} = O(t_n^\epsilon).
	\end{align*}
	Moreover, using again the Potter bounds and Equation~\eqref{eq:defofl1} we have 
	\begin{align*}
		\frac{L(n) n^{1/\alpha}}{\mu^{-1} g(\ell) \ell^{1/\alpha}} = O(t_n^{\epsilon - 1/\alpha}).
	\end{align*}
	Hence, taking $\epsilon < 2/\alpha$,  Equation~\eqref{eq:ulttoshow} follows readily from~\eqref{eq:t11simplifyme}. This completes the proof.
\end{proof} 

Throughout the rest of this section we continue using the notation from the proof of Theorem~\ref{te:dense}. With $\tau_n$ as in Equation~\eqref{eq:tau}, we set
\[
\tau_n^* = \sup_{k \ge n} \tau_k.
\]

We are going to make use of the following result by~\cite{zbMATH07179510}.
\begin{proposition}[{\cite[Lem. 2]{zbMATH07179510}}]
	\label{pro:blole2}
	Under the assumptions of Theorem~\ref{te:dense}, we have
	\[
	\lim_{n \to \infty} \sum_{\ell \,:\, |\ell \mu - n| \le t_n} \Pr{S_\ell = n} = \mu^{-1}
	\]
	for any sequence $(t_n)_{n \ge 1}$ satisfying
	\begin{align}
		\label{eq:bloeq}
		\frac{t_n}{g(n)n^{1/\alpha}} \to \infty, \qquad \frac{t_n^3}{n (g(n)n^{1/\alpha})^2} \to 0, \qquad \frac{t_n \tau^*_{\lfloor n / (2\mu) \rfloor} }{g(n)n^{1/\alpha}} \to 0.
	\end{align}
\end{proposition}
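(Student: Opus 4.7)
The plan is to combine Gnedenko's local limit theorem~\eqref{eq:locallimit} with a Riemann sum approximation. First I would observe that conditions~\eqref{eq:bloeq} force $t_n = o(n)$: the second condition gives $t_n = o\bigl(n^{1/3}(g(n)n^{1/\alpha})^{2/3}\bigr)$, and since $1/3 + 2/(3\alpha) < 1$ for $\alpha > 1$ and $g$ is slowly varying, this is indeed $o(n)$. Consequently, every $\ell$ with $|\ell\mu - n| \le t_n$ satisfies $\ell \ge n/(2\mu)$ for large $n$, so all the $\tau_\ell$ appearing are bounded by $\tau^*_{\lfloor n/(2\mu)\rfloor}$.

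Next I would expand each local probability via~\eqref{eq:tau} as
\[
\Pr{S_\ell = n} = \frac{1}{g(\ell)\ell^{1/\alpha}}\, f\!\left(\frac{n - \ell\mu}{g(\ell)\ell^{1/\alpha}}\right) + O\!\left(\frac{\tau^*_{\lfloor n/(2\mu)\rfloor}}{g(\ell)\ell^{1/\alpha}}\right).
\]
Summing the remainder over the roughly $2t_n/\mu$ admissible values of $\ell$ and using slow variation of $g$ to replace $g(\ell)\ell^{1/\alpha}$ by a uniform quantity of order $g(n)n^{1/\alpha}$, the cumulative error is $O\bigl(t_n \tau^*_{\lfloor n/(2\mu)\rfloor}/(g(n)n^{1/\alpha})\bigr)$, which vanishes by the third condition in~\eqref{eq:bloeq}.

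For the main term, I would set $a_n = g(\lfloor n/\mu\rfloor)\lfloor n/\mu\rfloor^{1/\alpha}$, parametrize $\ell = \lfloor n/\mu\rfloor + j$ for $|j| \lesssim t_n/\mu$, and use slow variation of $g$ together with a first-order Taylor expansion of $\ell^{1/\alpha}$ to obtain $g(\ell)\ell^{1/\alpha} = a_n(1 + O(t_n/n))$ uniformly in the range. Propagating this through the argument of $f$ shifts it by $O(t_n^2/(n a_n))$; since $f$ is the smooth, bounded density of a stable law with bounded derivative, the resulting change in $f$ per term is of the same order, and summed over $O(t_n)$ terms weighted by $1/a_n$ the total Taylor error is $O\bigl(t_n^3/(n a_n^2)\bigr)$. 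Since $a_n$ is of the order $g(n)n^{1/\alpha}$, this is exactly the quantity controlled by the second condition in~\eqref{eq:bloeq}, so it vanishes.

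What remains is the Riemann sum
\[
\sum_{j} \frac{1}{a_n}\, f\!\left(\frac{-j\mu}{a_n}\right)
\]
with step size $\mu/a_n \to 0$ and range $|j\mu/a_n| \lesssim t_n/a_n \to \infty$ by the first condition in~\eqref{eq:bloeq}. Integrability of $f$ then yields convergence to $\mu^{-1}\int_{\ndR} f(-u)\,\mathrm{d}u = \mu^{-1}$. The main obstacle is the bookkeeping in the third step: the Taylor perturbation has to propagate through both the prefactor $1/(g(\ell)\ell^{1/\alpha})$ and the argument of $f$, and the cubic exponent $t_n^3$ in the second condition of~\eqref{eq:bloeq} is precisely what is needed to absorb the sum of these local perturbations over a range of length $\Theta(t_n)$.
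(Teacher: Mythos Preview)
The paper does not give its own proof of this proposition; it is quoted verbatim from \cite[Lem.~2]{zbMATH07179510}, with only the remark that Bloznelis's argument, stated under Assumption~i), applies without change under Assumption~ii). So there is no in-paper proof to compare against, and your sketch (local limit theorem plus Riemann-sum approximation, with the three conditions in~\eqref{eq:bloeq} absorbing the three error sources) is exactly the natural route and presumably what Bloznelis does.

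One step in your outline is not quite right as written. You claim $g(\ell)\ell^{1/\alpha} = a_n(1 + O(t_n/n))$ ``by slow variation of $g$ together with a first-order Taylor expansion of $\ell^{1/\alpha}$''. The Taylor part gives $(\ell/\lfloor n/\mu\rfloor)^{1/\alpha} = 1 + O(t_n/n)$, but slow variation only yields $g(\ell)/g(\lfloor n/\mu\rfloor) = 1 + o(1)$ uniformly over the range, with no quantitative rate in general. If you only have the $o(1)$, the resulting shift in the argument of $f$ coming from the $g$-factor is $o(1)\cdot|x_\ell|$, which may be of order $o(t_n/a_n)\to\infty$, so the naive Lipschitz bound on $f$ summed over $\Theta(t_n)$ terms no longer closes.

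The fix is routine but needs to be said. One option: first replace $g(\ell)$ by $g(\lfloor n/\mu\rfloor)$ using only the uniform $o(1)$, controlling the resulting difference by the Riemann sum for $\int |u f'(u)|\,\mathrm{d}u$ (finite for any stable density with index $>1$), so that the error is $o(1)$ times a bounded quantity. Only after this reduction do you Taylor-expand $\ell^{1/\alpha}$, and then the second condition in~\eqref{eq:bloeq} absorbs the $O(t_n^3/(n a_n^2))$ error exactly as you describe. Alternatively, split the range into $|x_\ell|\le M$ and $M<|x_\ell|\le t_n/a_n$; on the bounded window uniform continuity of $f$ suffices, while the tail contributes at most a Riemann sum approximating $\mu^{-1}\int_{|u|>M/2}f$, which is small for large $M$. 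Either way the rest of your argument goes through unchanged.
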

To be precise,~\cite[Lem. 2]{zbMATH07179510} was formulated for Assumption i) on the weight-sequences. But its proof also applies to the case in Assumption ii) without the need of any modification.

\begin{proof}[Proof of Lemma~\ref{le:independent}]
	Let $(s_n)_{n \ge 1}$ denote a sequence of positive integers satisfying
	\begin{align}
		\label{eq:sreq}
		\frac{s_n}{L(n)n^{1/\alpha}} \to \infty \qquad \text{and} \qquad s_n \le n/(2 \mu).
	\end{align}
	Let $(t_n)_{n \ge 1}$ denote a sequence of positive real numbers such that 
	\begin{align}
		\label{eq:bw1}
		\frac{t_n}{L(n)n^{1/\alpha}} \to \infty \qquad \text{and} \qquad t_n = o(s_n).
	\end{align}
	We set
	\[
	\delta_n := \lfloor n/ \mu \rfloor - s_n.
	\]
	Let $\epsilon>0$. By the local limit theorem~\eqref{eq:locallimit} we may choose a constant $M>0$ sufficiently large so that
	\begin{align}
		\label{eq:bwair}
		\Pr{|S_{\delta_n} - \delta_n \mu| \le M g(n) n^{1/\alpha}} > 1 - \epsilon
	\end{align}
	for all $n$. Let \[
	k_1, \ldots, k_{\delta_n} \in \{k \ge 0 \mid \Pr{X=k} > 0\}
	\] denote integers with the property  that the sum
	\[
	y := \sum_{i=1}^{\delta_n} k_i 
	\]
	satisfies
	\begin{align}
		\label{eq:bw2}
		|y - \delta_n \mu| \le M g(n) n^{1/\alpha}.
	\end{align}
	Then, by Equations~\eqref{eq:proko} and~\eqref{eq:stoppedsum}, and standard properties of regularly varying functions~\cite[Thm. 1.2.4]{mikosch1999regular} it follows that
	\begin{align*}
		&\frac{\Prb{(K_1, \ldots, K_{ \delta_n})= (k_1, \ldots, k_{\delta_n}), |N_n - n/\mu| \le t_n} }{\Prb{(X_1, \ldots, X_{\delta_n})= (k_1, \ldots, k_{\delta_n})  }} \\
		&\qquad = \frac{\Prb{ |N - n/\mu| \le t_n, S_{N- \delta_n} = n - y }}{\Prb{S_N = n}} \\
		&\qquad \sim \mu\sum_{\ell \,:\, |\ell - n/\mu| \le t_n} \frac{\Pr{N=\ell}}{\Pr{N= \lfloor n / \mu \rfloor}}  \Pr{S_{\ell - \delta_n} = n -y}\\
		&\qquad \sim \mu \sum_{\ell \,:\, |\ell - n/\mu| \le t_n}\Pr{S_{\ell - \delta_n} = n -y}.
	\end{align*}
	Note that here we have used our assumption $t_n = o(s_n)$, which ensures that $\delta_n < n/\mu - t_n$ for large enough $n$. 
	
	In order to apply Proposition~\ref{pro:blole2}, we set
	\begin{align*}
		\tilde{n} &:= n-y = s_n \mu + O(g(n)n^{1/\alpha})  \\
		\tilde{\ell} &:= \ell - \delta_n.
	\end{align*}
	Hence 
	\begin{align*}
		\tilde{\ell} \mu - \tilde{n} = \mu(\ell - n/\mu) + O(g(n)n^{1/\alpha}).
	\end{align*}
	Thus, summing over all integers $\ell$ with $|\ell - n/\mu| \le t_n$ means that $\tilde{\ell}$ ranges over all integers satisfying a specific inequality of the form 
	\[
	-\mu t_n(1+o(1)) \le \tilde{\ell} \mu - \tilde{n} \le \mu t_n (1+o(1)).
	\]
	Thus, in order to apply Proposition~\ref{pro:blole2} we need to check the conditions of~\eqref{eq:bloeq} for this case, which simplify to
	\begin{align*}
		\frac{t_n}{g(\tilde{n})\tilde{n}^{1/\alpha}} \to \infty, \qquad \frac{t_n^3}{\tilde{n} (g(\tilde{n})\tilde{n}^{1/\alpha})^2} \to 0, \qquad \frac{t_n \tau^*_{\lfloor \tilde{n} / (2\mu) \rfloor} }{g(\tilde{n})\tilde{n}^{1/\alpha}} \to 0.
	\end{align*}
	Note that $\tilde{n} \sim \mu s_n$ and hence $g(\tilde{n})\tilde{n}^{1/\alpha} \sim g(s_n) s_n^{1/\alpha} \mu^{1/\alpha}$. Hence, writing 
	\[
	t_n = x_n g(s_n) s_n^{1/\alpha}
	\]
	for $x_n >0$, these conditions are  equivalent to
	\begin{align}
		\label{eq:bw3}
		x_n \to \infty,\qquad x_n^3 g(s_n) s_n^{1/\alpha -1} \to 0, \qquad x_n \tau^*_{ s_n/2(1+o(1))} \to 0.
	\end{align}
	Equation~\eqref{eq:bw1} may be rephrased by
	\begin{align}
		\label{eq:bw4}
		x_n \frac{g(s_n) s_n^{1/\alpha}}{g(n)n^{1/\alpha}} \to \infty \qquad \text{and} \qquad x_n s_n^{1/\alpha-1} g(s_n) = o(1).
	\end{align}
	
	The question now is, how slowly can we allow $s_n$  to tend to infinity (subject to~\eqref{eq:sreq}) so that there still exists a sequence $(x_n)_{n \ge 1}$ that satisfies both~\eqref{eq:bw3} and~\eqref{eq:bw4}. Note that the first two limits of~\eqref{eq:bw3} already imply the second limit of~\eqref{eq:bw4}. The second limit of~\eqref{eq:bw4} also implies $x_n \to \infty$, hence we may summarize the conditions on $(x_n)_{n \ge 1}$ by
	\begin{align}
		\label{eq:summarized}
		x_n \frac{g(s_n) s_n^{1/\alpha}}{g(n)n^{1/\alpha}} \to \infty,\qquad x_n \left(  g(s_n) s_n^{1/\alpha -1} \right)^{1/3} \to 0, \qquad x_n \tau^*_{ s_n/2(1+o(1))} \to 0.
	\end{align}
	The difficulty is obtaining a concrete bound for $\tau_n^*$. In general, we only know that $\tau_n^* = o(1)$, ensuring that for $\delta'>0$ sufficiently small we may set $s_n = \lfloor \delta' n\rfloor $ and find a sequence $x_n$ that tends to infinity sufficiently slowly so that~\eqref{eq:summarized} is satisfied. If $\Ex{X^3} < \infty$ then $\tau_n^* = O(1/\sqrt{n})$ by~\cite[Thm. 6, p. 197]{zbMATH03504209}, allowing us to take $s_n = o(n)$ with $s_n / n^{3/4} \to \infty$ and set $x_n = n^{1/8}$. If $\Ex{X^2}<\infty$ and $\Ex{X^2 \one_{X \ge n}} = O(n^{-r})$ for $0<r<1$, then $\tau_n^* = O(1/n^{r/2})$ by~\cite[Thm. 4.5.3]{MR0322926}, allowing us to take $x_n = n^{1/8}$  for $s_n / n^{3/4} \to \infty$ if $r \ge 1/3$, and $x_n = n^{\frac{r}{2(1+r)}}$ for $r<1/3$. 
	
	For these choices of sequences, Proposition~\ref{pro:blole2} yields
	\[
	\lim_{n \to \infty}	\sum_{\ell \,:\, |\ell - n/\mu| \le t_n}\Pr{S_{\ell - \delta_n} = n -y} = \mu^{-1}.
	\]
	Hence
	\begin{align}
		\label{eq:noooope}
		\frac{\Prb{(K_1, \ldots, K_{ \delta_n})= (k_1, \ldots, k_{\delta_n}), |N_n - n/\mu| \le t_n} }{\Prb{(X_1, \ldots, X_{\delta_n})= (k_1, \ldots, k_{\delta_n})  }} \to 1
	\end{align}
	as $n \to \infty$, 
	uniformly for all $(k_1, \ldots, k_{\delta_n})$ satisfying~\eqref{eq:bw2}. Furthermore, by Theorem~\ref{te:dense} we have $\Pr{|N_n - n/\mu| \le t_n} \to 1$ as $n$ tends to infinity. The limit~\eqref{eq:noooope} and Inequality~\eqref{eq:bwair} entail that for all large enough $n$
	\[
	\Prb{ \left|\sum_{i=1}^{\delta_n} K_i  - \delta_n \mu \right|  \le M g(n) n^{1/\alpha}} > 1 - 2 \epsilon.
	\]
	As $\epsilon>0$ was arbitrary, it follows that 
	\begin{align}
		\frac{\Prb{(K_1, \ldots, K_{ \min(\delta_n, N_n)})= (k_1, \ldots, k_{\delta_n})} }{\Prb{(X_1, \ldots, X_{\delta_n})= (k_1, \ldots, k_{\delta_n})  }} \to 1
	\end{align}
	as $n \to \infty$, uniformly for all $(k_1, \ldots, k_{\delta_n}) \in \cG_n$ for some set $\cG_n$ satisfying 
	\[
	\Prb{(K_1, \ldots, K_{ \min(\delta_n, N_n)}) \in \cG_n } > 1 - \epsilon 
	\]
	and
	\[
	\Prb{(X_1, \ldots, X_{\delta_n}) \in \cG_n  } > 1 - \epsilon
	\]
	for all large enough $n$. Using again that $\epsilon>0$ was arbitrary, it follows that
	\begin{align}
		\label{eq:asdfqwerasdf}
		\lim_{n \to \infty}	d_{\mathrm{TV}}\left( (K_1, \ldots, K_{\min(N_n, \delta_n)}), (X_1, \ldots, X_{\delta_n} ) \right)  = 0.
	\end{align} 
	In particular, since we may always take $s_n = \delta'n$ for $\delta'>0$ sufficiently small, it follows that~\eqref{eq:asdfqwerasdf} still holds for at least one sequence $(s_n)_{n \ge 1}$ satisfying $s_n = o(n)$.
\end{proof}

\begin{proof}[Proof of Lemma~\ref{le:contiguity}]	
	For all $M_1, M_2>0$ and $\delta>0$ we define the collection $\cF_{n, \delta, M_1,M_2}$ of finite sequences \[
	F = (\ell, k_1, \ldots, k_{\ell - \lfloor \delta n \rfloor })
	\]  with  integers $\ell, k_1, k_2, \ldots \ge 0$ satisfying the following properties:
	\begin{enumerate}[\qquad a)]
		\item $\Pr{N = \ell}>0$  and $\Pr{X=k_i} >0$ for all  $1 \le i \le \ell - \lfloor \delta n \rfloor $,
		\item $|\sum_{i=1}^{\ell - \lfloor \delta n \rfloor} k_i - (\ell - \delta n) \mu | \le  M_1 g(n)n^{1/\alpha}$,
		\item $|\ell\mu  - n| \le   M_2 L(n) n^{1/\alpha}$.
	\end{enumerate}
	For any such sequence $F \in \cF_{n, \delta, M_1,M_2}$ we set $y = \sum_{i=1}^{\ell - \lfloor \delta n \rfloor } k_i$.  By Equation~\eqref{eq:proko} and Assumption a), it follows that
	\begin{align}
		\label{eq:quotient}
		&\frac{\Prb{ (N_n, (K_i)_{1\le i \le  N_n - \lfloor \delta n \rfloor })  = F } } {\Prb{ (N_n, (X_i)_{1\le i \le  N_n - \lfloor \delta n \rfloor })  = F  }} \nonumber \\
		&\qquad = \frac{\Prb{ (N, (X_i)_{1\le i \le  N - \lfloor \delta n \rfloor })  = F  \mid S_N = n}}{ \Prb{N = \ell \mid S_N = n} \prod_{i=1}^{\ell - \lfloor \delta n \rfloor} \Pr{X = k_i} } \nonumber \\	
		&\qquad = \frac{\Prb{ (N, (X_i)_{1\le i \le  N - \lfloor \delta n \rfloor })  = F  , \sum_{i = \ell - \lfloor \delta n \rfloor +1}^\ell X_i = n-y}}{ \Prb{N = \ell , S_\ell = n} \prod_{i=1}^{\ell - \lfloor \delta n \rfloor} \Pr{X = k_i} } \nonumber \\	
		&\qquad = \frac{\Prb{S_{\lfloor \delta n \rfloor} = n-y}}{ \Prb{S_\ell = n} } \nonumber 		
	\end{align}
	Using~\eqref{eq:locallimit} it follows that
	\begin{align*}
		\frac{\Prb{S_{\lfloor \delta n \rfloor} = n-y}}{ \Prb{S_\ell = n} } = \frac{ g(\ell) \ell^{1/ \alpha} }{g(\lfloor \delta n \rfloor)\lfloor \delta n \rfloor^{1/\alpha} } \frac{f\left( \frac{n - y - \lfloor \delta n \rfloor \mu}{ g(\lfloor \delta n \rfloor) \lfloor \delta n \rfloor^{1/\alpha} }  \right) + o(1)}{f\left( \frac{n - \ell \mu}{ g(\ell) \ell^{1/\alpha}} \right) + o(1)}.
	\end{align*}
	By standard properties of regularly varying functions~\cite[Thm. 1.2.4]{mikosch1999regular}, Equation~\eqref{eq:defofl1}, Assumptions b) and c), and the fact that the density function $f$ is bounded, positive, and uniformly continuous it follows that this expression is bounded from above and below by constants that depend on $M_1$ and $M_2$, but not on $n$.
	
	Furthermore, for any $\epsilon>0$ we may take $M_1$ and $M_2$ sufficiently large so that by~\eqref{eq:locallimit}, Theorem~\ref{te:dense}, and Lemma~\ref{le:independent} we have
	\[
	\Pr{(N_n, (K_i)_{1\le i \le  N_n - \lfloor \delta n \rfloor }) \in  \cF_{n, \delta, M_1,M_2} } > 1 - \epsilon
	\]
	and
	\[
	\Pr{(N_n, (X_i)_{1\le i \le  N_n - \lfloor \delta n \rfloor })  \in  \cF_{n, \delta, M_1,M_2} } > 1 - \epsilon.
	\]
	This completes the proof.
\end{proof}

\begin{proof}[Proof of Theorem~\ref{te:functional}]
	The sums $S_n = \sum_{i=1}^n X_i$ satisfy the functional limit theorem
	\begin{align}
		\left( \frac{S_{\lfloor n s \rfloor} - ns \mu}{g(n)n^{1/\alpha}} \right) \convdis (Y_s,\,\, 0 \le s \le 1)
	\end{align}	
	as $n \to \infty$. By Lemma~\ref{le:independent} it follows that for each $0< \delta < \mu^{-1}$
	\[
	\left( \frac{\sum_{i=1}^{\min(N_n, \lfloor s n \rfloor)}K_i - n\mu s  }{g(n)n^{1/\alpha}}, \,\, 0 \le s \le \delta \right) \convdis ( Y_s, \,\, 0 \le s \le \delta).
	\]
	Since $(Y_s)_{s \ge 0}$ is almost surely continuous at $\delta$, it follows by
	Equation~\eqref{eq:defofl1}, Theorem~\ref{te:dense}, and~\cite[Lem. 5.7]{MR2946438} that
	\[
	\left( \frac{\sum_{i=1}^{N_n s}K_i - \mu N_n s  }{L(n)n^{1/\alpha}}, \,\, 0 \le s \le \delta \right) \convdis (\mu Y_s, \,\, 0 \le s \le \delta).
	\]
	By a time-reversal argument, it follows that
	\[
	\left( \frac{\sum_{i=1}^{\lfloor s N_n \rfloor}K_i - N_n s \mu }{L(n)n^{1/\alpha}}, \,\, 0 \le s \le 1 \right) \convdis (\mu Y_s, \,\, 0 \le s \le 1).
	\]
\end{proof}

\begin{proof}[Proof of Corollary~\ref{co:maxcomp}]
	Note that our assumptions entail that $X$ is not bounded. In any case, the statement~\eqref{eq:megatv} would be trivial when $X$ is bounded, because in this case Lemma~\ref{le:independent} ensures that both sides of the equation are with high probability equal to tuples with constant coordinates  equal to the largest integer that $X$ attains with positive probability. 
	
	Let $(s_n)_{n \ge 1}$ be as in Lemma~\ref{le:independent}. We are going to argue that for suitable choices of sequences $(t_n)_{n \ge 1}$  of positive integers with $t_n \le s_n$ the following  two statements hold:
	\begin{enumerate}[\qquad a)]
		\item 	If $\tilde{K}_{(1)} \ge \tilde{K}_{(2)} \ge \ldots$ denote the initial segment $(K_1, \ldots, K_{\min(\lfloor n/\mu \rfloor - s_n, N_n)})$ arranged in decreasing order, then
		\begin{align}
			\label{eq:claimaa}
			\lim_{n \to \infty} \Pr{ \tilde{K}_{(t_n)} <  \max\{K_i \mid {\lfloor n/\mu \rfloor - s_n \le i \le N_n} \} } = 0
		\end{align}
		\item If $\tilde{X}_{(1)} \ge \tilde{X}_{(2)} \ge \ldots$ denote $\lfloor n/\mu\rfloor - s_n $ independent copies of $X$ arranged in decreasing order, then
		\begin{align}
			\label{eq:claimbb}
			\lim_{n \to \infty} d_{\mathrm{TV}}\left( (\tilde{X}_{(1)}, \ldots, \tilde{X}_{(t_n)})  , (X_{(1)}, \ldots, X_{(t_n)})    \right) = 0.
		\end{align}
	\end{enumerate}
	
	These statements suffice to prove Equation~\eqref{eq:megatv}. Indeed, for any set~$A$ it follows by applying first Equation~\eqref{eq:claimbb}, then Lemma~\ref{le:independent}, and then Equation~\eqref{eq:claimaa} that
	\begin{align*}
		\Pr{ (X_{(1)}, \ldots, X_{(t_n)}) \in A} &= o(1) +  \Pr{ (\tilde{X}_{(1)}, \ldots, \tilde{X}_{(t_n)}) \in A} \\
		&= o(1) +  \Pr{ (\tilde{K}_{(1)}, \ldots, \tilde{K}_{(t_n)}) \in A} \\
		&= o(1) +  \Pr{ ({K}_{(1)}, \ldots, {K}_{(t_n)}) \in A}
	\end{align*}
	with uniform  $o(1)$ terms that do not depend on $A$. This implies Equation~\eqref{eq:megatv}.
	
	It remains to verify Equations~\eqref{eq:claimaa} and~\eqref{eq:claimbb}. Our assumptions on $X$ entail (in all cases) that there exists $A>0$ such that for all $\epsilon>0$ there exists a sequence  $\kappa(n)$ with
	\begin{align}
		\label{eq:epsilonlimit}
		\epsilon/ A \le  n \Pr{X \ge \kappa(n)} \le A \epsilon
	\end{align}
	for large enough $n$.
	By Theorem~\ref{te:dense} we know that with high probability $N_n - 2s_n < \lfloor n/\mu \rfloor - s_n$.  Using Lemma~\ref{le:independent} and a time-reversal argument it follows that
	\begin{align}
		\label{eq:matogege}
		&\Prb{ \max\{K_i \mid {\lfloor n/\mu \rfloor - s_n \le i \le N_n} \} \ge \kappa(s_n) } \\
		&\qquad \le o(1) + \Prb{ \max(X_1, \ldots, X_{2s_n}) \ge \kappa(s_n)} \nonumber \\
		&\qquad \le o(1) + 2s_n \Prb{ X \ge \kappa(s_n)} \nonumber \\
		&\qquad \le o(1) + 2 A \epsilon.\nonumber 
	\end{align}
	Furthermore, by Lemma~\ref{le:independent}, Equation~\eqref{eq:epsilonlimit}, and $s_n = o(n)$ and  $t_n = o(n)$ 
	\begin{align}
		\label{eq:matogege2}
		&\Prb{ \tilde{K}_{(t_n)} <\kappa(s_n) } \\
		&\qquad = o(1)+ \Prb{ \tilde{X}_{(t_n)} < \kappa(s_n) } \nonumber \\
		&\qquad = o(1) + \sum_{k=0}^{t_n-1} \binom{\lfloor n/\mu \rfloor - s_n}{k} \Pr{X \ge \kappa(s_n)}^{k} \Pr{X < \kappa(s_n)}^{\lfloor n/\mu \rfloor - s_n - k} \nonumber\\
		&\qquad \le o(1) + \Pr{X < \kappa(s_n)}^{ n/\mu (1+o(1))} \sum_{k=0}^{t_n-1}  \left(\frac{n}{\mu s_n} s_n\Pr{X \ge \kappa(s_n)}\right)^{k} \nonumber\\
		&\qquad \le o(1) + \exp\left( -\Theta(1) \frac{n\epsilon}{\mu s_n}  \right) t_n \left(\frac{n\epsilon}{\mu s_n} \Theta(1)\right)^{t_n} \nonumber\\
		&\qquad \le o(1) + \exp\left( -\Theta(1) \frac{n\epsilon}{\mu s_n} + \log(t_n) + t_n\log\left(\frac{n \epsilon}{\mu s_n}\Theta(1)\right) \right).\nonumber
	\end{align}
	
	As $\epsilon>0$ was arbitrary, \eqref{eq:matogege} and~\eqref{eq:matogege2}  verify Equation~\eqref{eq:claimaa} when $t_n = O(1)$. Consequently,~\eqref{eq:claimaa} also holds when $t_n$ tends to infinity sufficiently slowly. In the cases with additional assumptions on the moments on $X$, Lemma~\ref{le:independent} ensures that there exists $0<r\le1/3$ (that corresponds on the case under consideration) such that for any sequence $a_n \to \infty$ we may set $s_n = a_n n^{\frac{1}{1+r}}$. In particular, if $t_n = o(\frac{n^{\frac{r}{1+r}}}{\log n})$, we choose $a_n$ to tend to infinity sufficiently slowly so that for each $\epsilon<0$ the bound in~\eqref{eq:matogege2} tends to zero. Together with \eqref{eq:matogege} this verifies Equation~\eqref{eq:claimaa}.
	
	Since we verified Equation~\eqref{eq:claimaa} by reducing everything to the i.i.d. model, Equation~\eqref{eq:claimbb} follows by the same arguments. Hence the proof is complete.
\end{proof}

\subsection{The convergent case}

\begin{proof}[Proof of Theorem~\ref{te:convergent}]
	Equation~\eqref{eq:convcomponents} follows from the total variation approximation~\eqref{eq:tvapprox}. Furthermore, by standard properties of the total variation distance, Equation~\eqref{eq:tvapprox} is equivalent to Equation~\eqref{eq:convergentrephrase}. 
	
	Hence it suffices to verify \eqref{eq:convergentrephrase}. To this end, let $1 \le j \le m$ and $x_1, \ldots, x_{m-1} \ge 0$ be integers. Set 
	\[
	y = \sum_{i=1}^{m-1} x_i.
	\]	
	Applying Proposition~\ref{pro:kolchin} for $r = \rho_w$ yields for all $n$ with $n > y +  \max_{1 \le i \le m-1} x_i$
	\begin{align*}
		&\Pr{\phi(K_1, \ldots, K_{N_n}) = (x_1, \ldots, x_{j-1}, *, x_j, \ldots, x_{m-1})} \\
		&\qquad= \Pr{(K_1, \ldots, K_{N_n}) = (x_1, \ldots, x_{j-1}, n - y, x_j, \ldots, x_{m-1})} \\
		&\qquad= \frac{\Pr{ (X_1, \ldots, X_N) = (x_1, \ldots, x_{j-1}, n - y, x_j, \ldots, x_{m-1})}\Pr{N=m}}{\Pr{S_N = n}} \\
		&\qquad= \frac{\Pr{X= n-y} \Pr{N=m}}{\Pr{S_N = n} } \prod_{i=1}^{m-1} \Pr{X= x_i}.
	\end{align*}
	Each of the cases considered in Theorem~\ref{te:convergent} implies that
	\begin{align}
		\Pr{X= n-y} \sim \Pr{X=n}
	\end{align}
	as $n \to \infty$. Moreover, each of the cases implies
	\begin{align}
		\Pr{S_N= n} \sim \Ex{N}\Pr{X=n}.
	\end{align}
	Indeed, this follows from~\cite[Thm. 1, (i), (iii)]{zbMATH07179510} in the case i),~\cite[Thm. 4.31]{MR3097424} in the case ii), and~\cite[Theorems 2--5]{zbMATH07179510} in the remaining cases. Using these asymptotics and 
	\begin{align}
		\Pr{\hat{N} = m} = \frac{m \Pr{N=m}}{\Ex{N}},
	\end{align}
	it follows that
	\begin{align*}
		&\lim_{n \to \infty} \Pr{\phi(K_1, \ldots, K_{N_n}) = (x_1, \ldots, x_{j-1}, *, x_j, \ldots, x_{m-1})} \\
		&\qquad=  \frac{\Pr{N=m}}{\Ex{N} } \prod_{i=1}^{m-1} \Pr{X= x_i} \\
		&\qquad= \Prb{ \left(X_1, \ldots, X_{J-1}, *, X_{J}, \ldots, X_{\hat{N}-1}\right) = (x_1, \ldots, x_{j-1}, *, x_j, \ldots, x_{m-1})}.
	\end{align*}
	This completes the proof.
\end{proof}

\subsection{The mixture case}

\begin{proof}
	The assumptions on the weight sequences imply by~\cite[Thm. 1, (ii)]{zbMATH07179510} that
	\begin{align}
		\label{eq:stoppedsummixture}
		\Pr{S_N = n} \sim \Ex{N}\Pr{X=n} + \mu^{-1} \Pr{N= \lfloor n/\mu \rfloor }
	\end{align}
	as $n \to \infty$, and
	\begin{align}
		\lim_{n \to \infty} \frac{\mu^{-1} \Pr{N= \lfloor n/\mu \rfloor }}{\Ex{N}\Pr{X=n} } = p \in]0,1[.
	\end{align}
	Furthermore, by \cite[Eq. (131), (132)]{zbMATH07179510} we have
	\begin{align}
		\label{eq:yo0}
		\Pr{S_N = n, N \ge n / (2 \mu)} \sim \mu^{-1} \Pr{N= \lfloor n/\mu \rfloor },
	\end{align}
	and consequently
	\begin{align}
		\label{eq:yo1}
		\Pr{S_N = n, N < n / (2 \mu)} \sim\Ex{N}\Pr{X=n}.
	\end{align}
	We define the set 
	\[
	\mathfrak{E}_n = \left\{ (x_1, \ldots, x_k) \in \ndN_0^k \,\,\Big\vert\,\, k \ge n/(2 \mu), \sum_{i=1}^kx_i = n \right\}
	\]
	and the event 
	\[
	\cE_n= \{(K_1, \ldots, K_{N_n}) \in \mathfrak{E}_n\}.
	\]
	Using Proposition~\ref{pro:kolchin} for $r = \rho_w$, it follows that 
	\[
	\lim_{n \to \infty} \Pr{\cE_n} = p.
	\]

	Using Equation~\eqref{eq:yo0}, it follows that for sufficiently large integers $n$ it holds uniformly for all integers $\ell \ge \delta n$
	that
	\begin{align*}
		\Pr{N_n = \ell \mid \cE_n} &= \Pr{N= \ell \mid S_N = n, N \ge n / (2 \mu)} \\
		&= \frac{ \Pr{N = \ell } \Pr{S_\ell = n}}{\Pr{S_N = n ,N \ge n / (2 \mu)}} \nonumber \\
		&= \frac{ \Pr{N = \ell}}{\Pr{N= \lfloor n/\mu \rfloor }}\frac{1}{\mu^{-1} g(\ell) \ell^{1/\alpha}} \left( f\left( \frac{n - \ell \mu}{g(\ell) \ell^{1/\alpha}} \right) + o(1) \right), \nonumber
	\end{align*}
	with  $f$ denoting the density function of the stable distribution $S_\alpha(\gamma, 1, 0)$. By identical arguments as in the proof of Theorem~\ref{te:dense}, this simplifies to
	\begin{align*}
		\Pr{N_n = \ell \mid \cE_n} &= \frac{1}{L(n) n^{1/\alpha}}  \left(h(x) + o(1) \right).
	\end{align*}
	This verifies the local limit theorem~\eqref{eq:mixturellt}, which also implies the central limit theorem~\eqref{eq:mixtureclt}.
	
	Let $1 \le j \le m$ and $x_1, \ldots, x_{m-1} \ge 0$ be integers, and set
	\[
	y = \sum_{i=1}^{m-1} x_i.
	\]	
	Using Equation~\eqref{eq:yo1}, it follows that for all sufficiently large $n$ 
	\begin{align*}
		&\Pr{\phi(K_1, \ldots, K_{N_n}) = (x_1, \ldots, x_{j-1}, *, x_j, \ldots, x_{m-1}) \mid \cE_n^c} \\
		&\qquad= \Pr{(K_1, \ldots, K_{N_n}) = (x_1, \ldots, x_{j-1}, n - y, x_j, \ldots, x_{m-1}) \mid N < n / (2\mu)} \\
		&\qquad= \frac{\Pr{ (X_1, \ldots, X_N) = (x_1, \ldots, x_{j-1}, n - y, x_j, \ldots, x_{m-1})}\Pr{N=m}}{\Pr{S_N = n, N < n / (2\mu)}} \\
		&\qquad \sim  \frac{\Pr{X= n-y} \Pr{N=m}}{\Ex{N}\Pr{X=n} } \prod_{i=1}^{m-1} \Pr{X= x_i} \\
		&\qquad \sim  \Prb{ \left(X_1, \ldots, X_{J-1}, *, X_{J}, \ldots, X_{\hat{N}-1}\right) = (x_1, \ldots, x_{j-1}, *, x_j, \ldots, x_{m-1})}.
	\end{align*}
	This verifies
	\begin{align*}
		(\phi(K_1, \ldots, K_{N_n}) \mid \cE_n^c) \convdis \left(X_1, \ldots, X_{J-1}, *, X_{J}, \ldots, X_{\hat{N}-1}\right).
	\end{align*}
	Equations~\eqref{eq:mixturetvapprox} and~\eqref{eq:mixturecomponents} readily follow.
\end{proof}

\subsection{The dilute case}
\label{sec:proofdilute}

\begin{proof}[Proof of Theorem~\ref{te:dilute}]
	The local limit theorem in~\eqref{eq:dilutellt} implies the distributional convergence in~\eqref{eq:diluteclt}. Hence we will determine the local probabilities. Proposition~\ref{pro:kolchin} for $r = \rho_w$ entails for all $\ell \in \ndN$
	\begin{align}
		\label{eq:notnotquite}
		\Pr{N_n = \ell} &= \Pr{N= \ell \mid S_N = n} \\
		&= \frac{ \Pr{N = \ell } \Pr{S_\ell = n}}{\Pr{S_N = n }}. \nonumber
	\end{align}
	The assumptions on the weight sequences imply by~\cite[Thm. 1, (iv) and Eq. (133)]{zbMATH07179510} that
	\begin{align}
		\label{eq:stoppedsumdilute}
		\Pr{S_N = n} \sim n^{-1-\alpha(b-1)} V(W(\rho_w))^{-1}L_v(n^\alpha) \alpha \Ex{(X_{\alpha}(\gamma, 1,0))^{\alpha(b-1)}}
	\end{align}
	as $n \to \infty$. See~\cite[Thm. 7.4]{2011arXiv1112.0220J} for the choice of $\gamma$.
	
	We may write $\ell = n^{\alpha}x$ for some $x \in \ndR$. For now, let us consider the case $x \in \Omega$ for some fixed compact subset $\Omega \subset ]0,\infty[$. Using Gnedenko's local limit theorem~\cite[Thm. 4.2.1]{MR0322926}   and Equation~\eqref{eq:stoppedsumdilute} it follows that uniformly for all $\ell$ with $x \in \Omega$
	\begin{align}
		\label{eq:tosimplifydilute}
		\Pr{N_n = \ell} &= \frac{L_v(\ell)\ell^{-b} \ell^{-1/\alpha}(f(n / \ell^{1/\alpha}) + o(1))}{n^{-1-\alpha(b-1)} L_v(n^\alpha) \alpha \Ex{(X_{\alpha}(\gamma, 1,0))^{\alpha(b-1)}}}.
	\end{align}
	Since $\Omega \subset ]0, \infty[$ is compact, it follows by standard properties of regularly varying functions~\cite[Thm. 1.2.4]{mikosch1999regular} that
	\begin{align}
		\label{eq:yoqwer}
		\frac{L_v(\ell)}{L_v(n^{\alpha})} \to 1
	\end{align}
	uniformly for all $\ell$ with $x \in \Omega$. Hence Equation~\eqref{eq:tosimplifydilute} simplifies to
	\begin{align}
		\Pr{N_n = \ell} = \frac{x^{-b - 1/\alpha}(f(1/x^{1/\alpha}) + o(1))}{n^{\alpha}  \alpha \Ex{(X_{\alpha}(\gamma, 1,0))^{\alpha(b-1)}}}.
	\end{align}

	The density function $f$ is  uniformly continuous, hence the associated density $\tilde{f}$ satisfies $\lim_{x\to \infty} \tilde{f}(x) = 0$. Hence in order to verify~\eqref{eq:dilutellt} it remains to show that if $\ell_n  = n^{\alpha} x_n$ with  $x_n \to \infty$ then 
	\begin{align}
		\label{eq:eqremtoshow}
		n^{\alpha}\Pr{N_n = \ell_n} \to 0.
	\end{align} 
	To this end, note that by the Potter bounds it holds for any $\epsilon>0$ that
	\begin{align}
		\label{eq:instead}
		\frac{ L_v(x_n n^{\alpha})}{L_v(n^{\alpha})} = O(1)\max(x_n^\epsilon, x_n^{-\epsilon}).
	\end{align}
	Using~\eqref{eq:instead} and~\eqref{eq:tosimplifydilute} it follows that
	\begin{align}
		n^{\alpha}\Pr{N_n = \ell_n} \le \max(x_n^\epsilon, x_n^{-\epsilon}) x_n^{-b - 1/\alpha} o(1).
	\end{align}
	If we choose $\epsilon$ sufficiently small, then this bound tends to zero. 
\end{proof}

\begin{proof}[Proof of Corollary~\ref{co:poiz}]
	Let $\epsilon>0$. By Theorem~\ref{te:dilute} we may select a sufficiently large compact interval $\Omega \subset ]0, \infty[$ such that \[
	\Pr{N_n/n^{\alpha} \in \Omega} > 1 - \epsilon
	\] for all sufficiently large $n$. By a slight abuse of notation, we let $n^\alpha \Omega$ denote the collection of all integers $\ell$ with $\ell / n^\alpha \in \Omega$. Hence, for all integers $k \ge 0$
	\begin{align}
		\label{eq:analogouspriel}
		\Pr{\#_{k_n} P_n = k} &= R + \sum_{\ell \in n^\alpha \Omega} \Pr{N_n = \ell} \Pr{\#_{k_n} P_n = k \mid N_n = \ell} \\ 
		&= R + \sum_{\ell \in n^\alpha \Omega} \Pr{N_n = \ell}  \binom{\ell}{k} \Pr{X=k_n}^k \frac{\Pr{S_{\ell-k} = n - k k_n, X_i \ne k_n \text{ for all $1 \le i \le \ell-k$}  }}{\Pr{S_\ell = n}}, \nonumber
	\end{align}
	with $R := \Pr{\#_{k_n} P_n = k, N_n \notin n^\alpha \Omega} < \epsilon$. Let $\bar{X}_1, \bar{X}_2, \ldots$ denote independent copies of $(X \mid X \ne k_n)$ and set $\bar{S}_n = \sum_{i=1}^n \bar{X}_i$.  If $k_n \to \infty$  then we have uniformly for   $\ell \in n^{\alpha} \Omega$ and all $ 0 \le k \le o(n^{\alpha})$ (with $o(n^{\alpha})$ denoting  $n^\alpha$ times some sequence that we may choose to tend to zero arbitrarily slowly)
	\begin{align*}
		\Pr{S_{\ell-k} = n - k k_n, X_i \ne k_n \text{ for all $1 \le i \le \ell-k$}} &={\Pr{\bar{S}_{\ell-k} = n - k k_n}}{(1- \Pr{X=k_n})^{\ell-k}} \\
		&\sim \frac{\Pr{\bar{S}_{\ell-k} = n - k k_n}}{\exp(\ell \Pr{X=k_n})}.
	\end{align*}
	 Set $x := n^{-\alpha} \ell$. It follows that if $k_n \to \infty$ then we have uniformly for all $ 0 \le k \le o(n^{\alpha})$
	\begin{align}
		\label{eq:eindh1}
	\Pr{\#_{k_n} P_n = k} = R + (1+o(1)) \sum_{\ell \in n^\alpha \Omega} \Pr{N_n = \ell} \frac{(x n^{\alpha} \Pr{X=k_n})^k}{k! \exp(x n^{\alpha} \Pr{X=k_n})} \frac{\Pr{\bar{S}_{\ell-k} = n - k k_n}}{\Pr{S_\ell = n}}.
	\end{align}
	By Gnedenko's local limit theorem~\cite[Thm. 4.2.1]{MR0322926}  we also know that \begin{align*}
		\Pr{S_\ell = n} &= \ell^{-1/\alpha}(f(n / \ell^{1/\alpha}) + o(1)) = \ell^{-1/\alpha}f(n / \ell^{1/\alpha})(1+ o(1))
	\end{align*} uniformly for $\ell \in n^{\alpha} \Omega$. By a straight-forward  adaption of the proof of~\cite[Thm. 4.2.1]{MR0322926} it follows that the same local limit theorem also holds for the sums of $\bar{X}_1, \bar{X}_2, \ldots$. 
Hence, under the assumption $k_n \to \infty$ it follows that uniformly for all $0 \le k \le o( \min(n^\alpha, n/k_n) ) $ 
\begin{align}
					\label{eq:eindh2}
\frac{\Pr{\bar{S}_{\ell-k} = n - k k_n}}{\Pr{S_\ell = n}} = 1+ o(1).
\end{align}

Now, suppose that $k_n$ is an arbitrary sequence satisfying $n^{\alpha} \Pr{X=k_n} \to \upsilon \in ]0, \infty[$. Then $k_n = \Theta(n^{\alpha / (1+\alpha)})$. Combining Equations~\eqref{eq:eindh1} and~\eqref{eq:eindh2} and the local limit theorem for $N_n$ from Theorem~\ref{te:dilute} it follows that at least for $k$ constant 
\begin{align*}
	\Pr{\#_{k_n} P_n = k} &= R + (1+o(1)) \sum_{\ell \in n^\alpha \Omega} \Pr{N_n = \ell} \frac{(x \upsilon)^k}{k! \exp(x \upsilon)} \\
	&= R +   \Exb{ \frac{(\upsilon Z)^k}{k! \exp(\upsilon Z)}, Z \in \Omega} + o(1).
\end{align*}
Since we may take $\epsilon>0$ arbitrarily small (and chose $\Omega$ accordingly), it follows that 
\begin{align}
	\Pr{\#_{k_n} P_n = k}&=   \Exb{ \frac{(\upsilon Z)^k}{k! \exp(\upsilon Z)}} + o(1).
\end{align}

Next, suppose that $k_n$ satisfies $n^{\alpha} \Pr{X=k_n} \to 0$ instead. If $k_n \to \infty$, then by the same arguments it follows that
\begin{align}
	\label{eq:zeronaught}
\Pr{\#_{k_n} P_n = 0} = 1 + o(1).
\end{align}
If  $k_n$ does not tend to infinity, then the assumption $n^{\alpha} \Pr{X=k_n} \to 0$ implies that for each large enough $n$ either $k_n$ belongs to some of the finitely many values $r$ with $\Pr{X= r} = 0$, or $n$ belongs to a fixed subsequence along which $k_n$ tends to infinity. Hence~\eqref{eq:zeronaught} still holds.

We have thus completed the verification of Equation~\eqref{eq:refer2}.  It remains to check Equation~\eqref{eq:refer1}. To this end, suppose that $k_n$ is an arbitrary sequence satisfying $n^{\alpha} \Pr{X=k_n} \to \infty$. We know that $N_n / n^\alpha \convdis Z$ by Theorem~\ref{te:dilute}. By Slutsky's theorem it follows that in order to show~\eqref{eq:refer1} it suffices to verify that
\begin{align}
	\label{eq:priel}
	\frac{\#_{k_n} P_n}{\Pr{X=k_n} N_n} \convp 1.
\end{align}

In order to do so, we first consider the subcase for which $k_n$ is constant. For ease of notation, we set 
\[
\mathfrak{A}_\ell = \{ k \in \ndN_0 \mid |k/(\ell \Pr{X=k_n}) - 1| > \delta \}.
\] For any $\delta>0$ we have
\begin{align}
	\label{eq:arganalougous}
	\Prb{\left|\frac{\#_{k_n} P_n}{\Pr{X=k_n} N_n} -1 \right| > \delta} &= \Prb{\left|\frac{\#_{k_n} P_n}{\Pr{X=k_n} N_n} -1 \right| > \delta, N_n \notin  n^{\alpha} \Omega } \\&\quad+ \sum_{\ell \in n^\alpha \Omega} \Pr{N_n = \ell} \frac{\Prb{S_\ell = n, |\{ 1 \le i \le \ell \mid X_i = k_n\}| \in \mathfrak{A}_\ell }}{\Pr{S_\ell = n}}. \nonumber
\end{align}
The first summand tends to zero by Theorem~\ref{te:dilute}. The second summand tends to zero because the denominator satisfies $\Pr{S_\ell = n} = \Theta(n^{-1})$ by the local limit theorem and the numerator tends to zero way faster (comparably to  $\exp( - \Theta(n^\alpha) \Pr{X=k_n})$) by the Chernoff bounds. This verifies Equation~\eqref{eq:priel} for $k_n$ constant. (In fact, this argument would still work as long as $\frac{n^{\alpha} \Pr{X=k_n}}{\log n} \to \infty$, but we are not going to use this here.)

In order to verify Equation~\eqref{eq:priel} in the remaining cases we may without loss of generality assume that $k_n \to \infty$. Indeed, if Equation~\eqref{eq:priel} holds for constants, then there is some sequence $k_n' \to \infty$ so that for each $\delta>0$
\begin{align}
	\label{eq:substack}
	\sum_{\substack{0 \le k' \le k_n' \\ \Pr{X=k'}>0}} \Prb{\left|\frac{\#_{k'} P_n}{\Pr{X=k'} N_n} -1 \right| > \delta} \to 0.
\end{align}
We may hence split an arbitrary sequence $k_n$ into two subsequences, one where $k_n \le k_n'$, and one where $k_n > k_n'$.  Thus, it really suffices to show ~\eqref{eq:priel} for  the case $k_n \to \infty$. 

Let us hence consider the remaining case where $k_n \to \infty$. It follows easily from~\eqref{eq:substack} that with high probability $\#_{k_n} P_n < N_n / 2$. Set $\mathfrak{A}_\ell' = \{k \in \mathfrak{A}_\ell \mid k < \ell/2\}$. Analogously as in Equation~\eqref{eq:analogouspriel} we may write
\begin{align*}
	\Prb{\left|\frac{\#_{k_n} P_n}{\Pr{X=k_n} N_n} -1 \right| > \delta} &= o(1) + \sum_{\substack{\ell \in n^\alpha \Omega \\ k \in \mathfrak{A}_\ell'}} \Pr{N_n = \ell}  \binom{\ell}{k} \Pr{X=k_n}^k (1- \Pr{X=k_n})^{\ell-k} \frac{\Pr{\bar{S}_{\ell-k} = n - k k_n}}{\Pr{S_\ell = n}}.
\end{align*}
Here the $o(1)$ term corresponds to the events that either $N_n \notin n^\alpha \Omega$ or $\#_{k_n} P_n \ge N_n / 2$, both of which have probabilities that tend to zero. Since $k_n \to \infty$ and since we only consider $k< \ell/2$ it follows analogously as for Equation~\eqref{eq:eindh2} (that is, by the local limit theorems for the nominator and the denominator) that
\begin{align}
	\label{eq:eindh3}
	\frac{\Pr{\bar{S}_{\ell-k} = n - k k_n}}{\Pr{S_\ell = n}} = O(1)
\end{align}
uniformly for all $\ell \in n^\alpha \Omega$ and $k \in \mathfrak{A}_\ell'$. Hence, using $n^{\alpha} \Pr{X=k_n} \to \infty$ and the Chernoff bounds, 
\begin{align*}
	\Prb{\left|\frac{\#_{k_n} P_n}{\Pr{X=k_n} N_n} -1 \right| > \delta} &= o(1) + O(1) \sum_{\ell \in n^\alpha \Omega} \Pr{N_n = \ell} \Pr{\mathrm{Bin}(\ell, \Pr{X=k_n}) \in \mathfrak{A}_\ell'} \\
	&= o(1),
\end{align*}
with $\mathrm{Bin}(\ell, \Pr{X=k_n})$ denoting some binomial random variable with the corresponding parameters. This completes the verification of~\eqref{eq:priel} and hence concludes the proof of Equation~\eqref{eq:refer1}.
\end{proof}

\begin{proof}[Proof of Corollary~\ref{co:pointprocess}]
	Let $\psi \ge 0$ denote a non-negative Riemann integrable function whose support is a compact subset of the interval $]0,1]$. Set
	\[
		P_n^\psi := \sum_{i=1}^{N_n} \psi(K_i / n).
	\]
	By Theorem~\ref{te:dilute} there exists a sufficiently large compact interval $\Omega \subset ]0, \infty[$ such that \[
	\Pr{N_n/n^{\alpha} \in \Omega} > 1 - \epsilon
	\] for all sufficiently large $n$. As before, we let $n^\alpha \Omega$ denote the collection of all integers $\ell$ with $\ell / n^\alpha \in \Omega$. Then
	\begin{align}
		\label{eq:lasik1}
		\Exb{P_n^\psi} &= R + \sum_{\ell \in n^\alpha \Omega} \Pr{N_n = \ell} \ell \Exb{\psi(K_1/n) \mid N_n = \ell}
	\end{align}
	with
	\[
		R := \Exb{P_n^\psi, N_n \notin n^\alpha \Omega}.
	\]
	Since we assumed $\psi$ to have  compact support contained in $]0,1]$, it follows that there exists  $0<\delta<1$ with $\psi(y)=0$ whenever $y<\delta$. Since $\sum_{i=1}^{N_n} K_i = n$, it follows that $K_i \ge \delta n$ only for at most $1/\delta$ many $i$. Hence
	\begin{align}
		\label{eq:duetovaliente}
		P_n^\psi \le \delta^{-1} \sup \psi,
	\end{align}
and	$\sup \psi < \infty$ since $\psi$ is Riemann integrable. Thus
	\begin{align}
		\label{eq:rakaka}
		R < \epsilon  \delta^{-1} \sup \psi.
	\end{align}
	Recall that $S_\ell$ denotes the sum of $\ell$ independent copies of $X$. 	Using Gnedenko's local limit theorem~\cite[Thm. 4.2.1]{MR0322926} and setting $x := \ell / n^{\alpha}$ we have uniformly for all $\ell \in n^\alpha \Omega$
	\begin{align*}
		\Exb{\psi(K_1/n) \mid N_n = \ell} &= \sum_{k =0}^n \psi(k/n) \Pr{X=k}  \frac{\Pr{S_{\ell -1}= n-k}}{\Pr{S_\ell= n}} \\
		&= \sum_{k=0}^n \psi(k/n) \Pr{X=k}  \frac{(\ell-1)^{-1/\alpha}(f((n-k)    / (\ell-1)^{1/\alpha}) + o(1))}{\ell^{-1/\alpha}(f(n / \ell^{1/\alpha}) + o(1))} \\
		&= \frac{1}{f(x^{-1/\alpha})}\sum_{k = 0}^n \psi(k/n) \Pr{X=k} (f(x^{-1/\alpha}(1 - k/n)) + o(1)).
	\end{align*}
	We set $y := k /n$.  For $y< \delta$ we have $\psi(k/n)=0$, and for $y\ge \delta$ we  have  that $k$ becomes large as $n \to \infty$ and hence
	\[
		\Pr{X=k} \sim \frac{c_w}{W(\rho_w)} k^{-\alpha -1} \sim \frac{c_w}{W(\rho_w)} y^{-\alpha -1} n^{-\alpha -1}.
	\]
	Hence
	\begin{align}
	\Exb{\psi(K_1/n) \mid N_n = \ell} =   n^{-\alpha -1} \frac{c_w}{W(\rho_w)}  \frac{1}{f(x^{-1/\alpha})}\sum_{k = \lfloor \delta n \rfloor}^n \psi(y) y^{-\alpha-1} (f(x^{-1/\alpha}(1 - y)) + o(1)),
	\end{align}
with an $o(1)$ term that is uniform in $n$. Plugging this into Equation~\eqref{eq:lasik1} and applying Theorem~\ref{te:dilute} yields
\begin{align}
	\Exb{P_n^\psi} &= R + n^{-\alpha-1} \sum_{\ell \in n^\alpha \Omega}  \frac{c_w}{W(\rho_w)}  \frac{x \tilde{f}(x)}{f(x^{-1/\alpha})}\sum_{k = \lfloor \delta n \rfloor}^n \psi(y) y^{-\alpha-1} (f(x^{-1/\alpha}(1 - y)) + o(1)). \\
	&= R + o(1) + \frac{c_w}{W(\rho_w)} \frac{1}{\alpha \Ex{(X_{\alpha}(\gamma, 1,0))^{\alpha(b-1)}}}  \int_\Omega \int_{[\delta,1]} \frac{1}{x^{b+1/\alpha-1}} \frac{\psi(y)}{y^{\alpha+1}} f(x^{-1/\alpha}(1 - y)) \,\mathrm{d}y\,\mathrm{d}x. \nonumber
\end{align}
We may write $\Omega =[\delta_1,\delta_2]$ for some $0< \delta_1<\delta_2$. By Fubini's theorem and  integration by the substitution $t=(1-y)/x^{1/\alpha}$, it follows that
\[
\int_\Omega \int_{[\delta,1]} \frac{1}{x^{b+1/\alpha-1}} \frac{\psi(y)}{y^{\alpha+1}} f(x^{-1/\alpha}(1 - y)) \,\mathrm{d}y\,\mathrm{d}x = \int_\delta^1 \frac{\psi(y)}{y^{\alpha+1}} \int_{(1-y)/\delta_2^{1/\alpha}}^{(1-y)/\delta_1^{1/\alpha}} \alpha f(t) \frac{ (1-y)^{\alpha(2-b-1/\alpha)}}{t^{\alpha(2-b)}} \,\mathrm{d}t\,\mathrm{d}y.
\]
Since $\epsilon>0$ was arbitrary, and since we may always replace $\Omega$ by a larger compact interval, and since $\alpha(2-b) \in ]0,1[$ ensures that $\int_{0}^\infty f(t) t^{-\alpha(2-b)}\,\mathrm{d}t < \infty$ (see ~\cite[Thm. 5.1, Ex. 5.5]{2011arXiv1112.0220J}),  it follows that 
\begin{align}
	\label{eq:intensityfunc}
	\lim_{n \to \infty} \Exb{P_n^\psi} = \frac{c_w}{W(\rho_w)} \frac{\Ex{(X_{\alpha}(\gamma, 1,0))^{-\alpha(2-b)}}}{ \Ex{(X_{\alpha}(\gamma, 1,0))^{\alpha(b-1)}}}  \int_0^1 \frac{\psi(y)}{y^{\alpha+1} (1-y)^{1-\alpha(2-b)} } \,\mathrm{d}y.
\end{align}
By Equations~\eqref{eq:lambdadilute} and \eqref{eq:xalphadilute}, we get 
\begin{align*}
\frac{c_w}{W(\rho_w)} \frac{\Ex{(X_{\alpha}(\gamma, 1,0))^{-\alpha(2-b)}}}{ \Ex{(X_{\alpha}(\gamma, 1,0))^{\alpha(b-1)}}} &= \frac{c_w}{W(\rho_w)} \lambda^{-1} \frac{\Gamma(3-b) \Gamma(1-\alpha(b-1))}{\Gamma(1+\alpha(2-b)) \Gamma(2-b)} = \frac{\Gamma(1- \alpha(b-1)) }{\Gamma(1-\alpha) \Gamma(\alpha(2-b))} 
\end{align*}
and hence
\begin{align}
	\label{eq:intensityfunc2}
	\lim_{n \to \infty} \Exb{P_n^\psi} = \frac{1}{B(1-\alpha, \alpha(2-b))}  \int_0^1 \frac{\psi(y)}{y^{\alpha+1} (1-y)^{1-\alpha(2-b)} } \,\mathrm{d}y.
\end{align}

Let us now write $P_n^{I} = P_n^\psi$ for the special case where $\psi = \one_{I}$ is the indicator function for a compact interval $I$ satisfying $I \subset ]0,1]$. We continue using the same notation, in particular $0<\delta<1$ satisfies $I \cap [0,\delta[ = \emptyset$. For integers $m \ge 1$ we set $(P_n^{I})_{m} = P_n^I (P_n^I -1) \cdots (P_n^I - m +1)$. Then arguing analogously as before we obtain
\begin{align*}
		\Exb{(P_n^I)_m} &= R_m + \sum_{\ell \in n^\alpha \Omega} \Pr{N_n = \ell} \ell ( \ell-1) \cdots (\ell -m +1) \Exb{ \one_I(K_1/n) \cdots \one_I(K_m/n) \mid N_n = \ell} \\
						&= R_m + (1+o(1))\sum_{\ell \in n^\alpha \Omega} \tilde{f}(x) x^m n^{(m-1)\alpha} \Exb{ \one_I(K_1/n) \cdots \one_I(K_m/n) \mid N_n = \ell}
\end{align*}
with
\[
	R_m := \Exb{(P_n^I)_m, N_n \notin n^\alpha \Omega} \le \delta^{-m} \epsilon.
\]
Setting $y_i = k_i / n$ for $1 \le i \le m$, and $nI = \{k \in \ndZ \mid k/n \in I\}$, we obtain analogously as before
\begin{align*}
	&\Exb{ \one_I(K_1/n) \cdots \one_I(K_m/n) \mid N_n = \ell} \\
	&\qquad= \sum_{k_1, \ldots, k_m \in nI} \Pr{X=k_1} \cdots \Pr{X=k_m}  \frac{\Pr{S_{\ell -m}= n-k_1 - \ldots - k_m}}{\Pr{S_\ell= n}} \\
	&\qquad= \left( \frac{c_w}{W(\rho_w)}  \right)^m n^{-m(1+\alpha)} \sum_{k_1, \ldots, k_m \in nI} y_1^{-1-\alpha} \cdots y_m^{-1-\alpha} \frac{f(x^{-1/\alpha}(1-y_1 - \ldots - y_m)) + o(1)}{f(x^{-1/\alpha})}.
\end{align*}
Hence
\begin{align*}
	&\Exb{(P_n^I)_m} \\
	&= R_m + o(1) + \left( \frac{c_w}{W(\rho_w)}  \right)^m \int_I \cdots \int_I \int_\Omega \tilde{f}(x) x^m y_1^{-1-\alpha} \cdots y_m^{-1-\alpha} \frac{f(x^{-1/\alpha}(1-y_1 - \ldots - y_m))}{f(x^{-1/\alpha})} \,\mathrm{d}x\,\mathrm{d}y_1 \cdots \mathrm{d}y_m \\
		&= R_m + o(1) + \frac{\left( \frac{c_w}{W(\rho_w)}  \right)^m }{\alpha  \Ex{(X_{\alpha}(\gamma, 1,0))^{\alpha(b-1)}}} \int_I \cdots \int_I \int_\Omega  y_1^{-1-\alpha} \cdots y_m^{-1-\alpha} \frac{f(x^{-1/\alpha}(1-y_1 - \ldots - y_m))}{x^{b+1/\alpha-m}} \,\mathrm{d}x\,\mathrm{d}y_1 \cdots \mathrm{d}y_m.
\end{align*}
Recall that $f(t)=0$ for $t \le 0$. Integrating by the substitution $t = x^{-1/\alpha}(1- y_1 - \ldots - y_m)$ yields for $y_1 + \ldots + y_m <1$
\begin{align*}
\int_\Omega   \frac{f(x^{-1/\alpha}(1-y_1 - \ldots - y_m))}{x^{b+1/\alpha-m}} \,\mathrm{d}x &= \alpha  \int_{(1-y_1 - \ldots - y_m)/\delta_2^{1/\alpha}}^{(1-y_1 - \ldots - y_m)/\delta_1^{1/\alpha}} f(t) \frac{ (1-y_1 - \ldots - y_m)^{\alpha(m+1)- \alpha b-1}}{t^{\alpha(m+1-b)}} \,\mathrm{d}t.
\end{align*}
Hence, arguing as before, we obtain
\begin{multline}
	\label{eq:facmom}
	\lim_{n \to \infty} \Exb{(P_n^I)_m } = \\ \left(\frac{c_w}{W(\rho_w)}\right)^m \frac{\Ex{(X_{\alpha}(\gamma, 1,0))^{-\alpha(m+1-b)}}}{ \Ex{(X_{\alpha}(\gamma, 1,0))^{\alpha(b-1)}}}  \int_I \cdots \int_I \one_{y_1 + \ldots + y_m \le 1}\frac{(1-y_1 - \ldots - y_m)^{\alpha(m+1-b)-1}}{y_1^{\alpha+1}\cdots y_m^{\alpha+1}  } \,\mathrm{d}y_1 \cdots \mathrm{d}y_m.
\end{multline}
By Equation~\eqref{eq:duetovaliente} it follows that $(P_n^I)_m < \delta^{-m}$. Hence the method of moments  applies, yielding the existence of a random variable $P_\infty^I$ with
\begin{align}
	\label{eq:pniconv}
	P_n^I \convdis P_\infty^I \qquad \text{and} \qquad \lim_{n \to \infty} \Exb{(P_n^I)_m } = \Ex{(P_\infty^I)_m}, \quad m \ge 1
\end{align}
and
\begin{multline}
	\label{eq:wellitried}
	\Ex{(P_\infty^I)_m} = \\ \left(\frac{c_w}{W(\rho_w)}\right)^m \frac{\Ex{(X_{\alpha}(\gamma, 1,0))^{-\alpha(m+1-b)}}}{ \Ex{(X_{\alpha}(\gamma, 1,0))^{\alpha(b-1)}}}  \int_I \cdots \int_I \one_{y_1 + \ldots + y_m \le 1} \frac{(1-y_1 - \ldots - y_m)^{\alpha(m+1-b)-1}}{y_1^{\alpha+1}\cdots y_m^{\alpha+1}  } \,\mathrm{d}y_1 \cdots \mathrm{d}y_m.
\end{multline}
for all integers $m \ge 1$. By the same arguments, it follows that~\eqref{eq:pniconv} still holds when $I$ is a finite union of intervals that are subsets of $]0,1]$ so that $I$ is disjoint to some neighbourhood of $0$. This in turn implies that for each step function $\psi: ]0,1] \to \ndR_{\ge 0}$ with compact support the method of moments applies to $P_n^\psi$, yielding that there is a limiting random variable $P_\infty^\psi$ such that 
\begin{align}
	\label{eq:wwwwinvint}
	P_n^\psi \convdis P_\infty^\psi \qquad \text{and} \qquad \lim_{n \to \infty} \Exb{(P_n^\psi)_m } = \Ex{(P_\infty^\psi)_m}, \quad m \ge 1.
\end{align}
Now, suppose that $\psi: ]0,1] \to \ndR_{\ge 0}$ is continuous with compact support. By standard arguments involving approximations by step functions, it follows that~\eqref{eq:wwwwinvint} still holds for $\psi$ as well. Indeed, we may select $\delta>0$ with $\psi(x) =0$ for all $x \le \delta$. We may furthermore select an increasing sequence of step functions $\psi_1 \le \psi_2 \le \ldots \le \psi$ with $\psi_k(x) = 0$ for all $x \le \delta$ and $k \ge 1$, and $\sup | \psi_k - \psi| \to 0$ as $k \to \infty$. This way, $|P_n^\psi - P_n^{\psi_k}| \le \delta^{-1} \sup | \psi - \psi|$. Since both $P_n^\psi$ and $P_n^{\psi_k}$ are bounded by $ \delta^{-1} \sup \psi$, it follows that for any integer $\ell \ge 1$ it holds that \[\Ex{(P_n^{\psi_k})^\ell} = \Ex{(P_n^{\psi})^\ell} + O(\sup | \psi_k - \psi|),\] with the $O$-term being uniform in $k$ and $n$. Since~\eqref{eq:wwwwinvint} holds for step functions, it follows that \[
\Ex{(P_\infty^{\psi_k})^\ell} = \limsup_{n \to \infty} \Ex{(P_n^{\psi})^\ell} + O(\sup | \psi_k - \psi|).\]
The term on the left side of this equation is increasing in $k$ and bounded by $(\delta^{-1} \sup \psi)^\ell$. Taking $k \to \infty$ on both sides, and repeating the same argument with $\liminf_{n \to \infty}$, it follows that
\[
 \lim_{k \to \infty} \Ex{(P_\infty^{\psi_k})^\ell} =  \lim_{n \to \infty} \Ex{(P_n^{\psi})^\ell}.
\]
Thus, similarly as before, the method of moments applies and~\eqref{eq:wwwwinvint} holds.

Having verified~\eqref{eq:wwwwinvint}  for any continuous function $\psi: ]0,1] \to \ndR_{\ge 0}$  with compact support, we may apply~\cite[Cor. 4.14]{zbMATH06684669}, readily yielding the existence of a point process $\Upsilon$ with $\Upsilon_n \convdis \Upsilon$, and by~\eqref{eq:intensityfunc} its intensity is given by
	\[
     \frac{1}{B(1-\alpha, \alpha(2-b)) x^{\alpha+1} (1-x)^{1-\alpha(2-b)} }\,\mathrm{d}x.
\]

It remains to verify that almost surely $\Upsilon(]0,1]) = \infty$. To this end, let $\eta_1 \ge \eta_2 \ge \ldots$ denote the ranked points of $\Upsilon$, with $\eta_k := 0$ whenever $\Upsilon(]0,1]) <k$. Suppose that there exists an integer $k \ge 1$ with 
\[
\delta_0 := \Pr{\eta_k = 0}>0.
\]
By similar arguments as in~\cite[Lem. 4.4]{zbMATH01877115} it  follows from $\Upsilon_n \convdis \Upsilon$ that the $k$th largest component $K_{(k)}$ of $P_n$ satisfies
\[
	K_{(k)} /n \convdis \eta_k.
\]
Hence for any $\epsilon>0$ we have 
\[
	\liminf_{n \to \infty} \Pr{K_{(k)} /n  < \epsilon}  \ge \delta_0.
\]
Thus, there exists a sequence $a_n>0$ with $a_n \to 0$ and 
\begin{align}
	\label{eq:alealealign}
	\Pr{K_{(k)} /n  < a_n} \ge \delta_0/2
\end{align}
for all $n \ge 1$. We may replace $a_n$ by any sequence $a_n' \ge a_n$ with $a_n' = o(1)$. Hence, without loss of generality we may assume that 
\[
	a_n^{1+\alpha} n \to \infty.
\]
Let $C>0$ denote an arbitrary constant, and set 
\[
	b_n = (1/a_n^\alpha - C)^{-1/\alpha}
\]
so that $a_n \sim b_n$, and for all large enough integers $n$  
\[
	a_n < b_n \qquad \text{and} \qquad \frac{1}{a_n^\alpha} - \frac{1}{b_n^\alpha} = C.
\]
Set ${I_n} = [a_n, b_n]$. Arguing as before (and using the same notation) we arrive at
\begin{align*}
	\Exb{(P_n^{I_n})_m} &=  R_m + (1+o(1))\sum_{\ell \in n^\alpha \Omega} (\tilde{f}(x) +o(1)) x^m n^{(m-1)\alpha} \Exb{ \one_{I_n}(K_1/n) \cdots \one_{I_n}(K_m/n) \mid N_n = \ell}
\end{align*}
with $R_m  = \Exb{(P_n^{I_n})_m, N_n \notin n^\alpha \Omega}$
and
\begin{multline*}
	 \Exb{ \one_{I_n}(K_1/n) \cdots \one_{I_n}(K_m/n) \mid N_n = \ell} \\ \sim \left( \frac{c_w}{W(\rho_w)}  \right)^m n^{-m(1+\alpha)} \sum_{k_1, \ldots, k_m \in n{I_n}} y_1^{-1-\alpha} \cdots y_m^{-1-\alpha} \frac{f(x^{-1/\alpha}(1-y_1 - \ldots - y_m)) + o(1)}{f(x^{-1/\alpha})}.
\end{multline*}
Since $n b_n = o(n)$ we have $y_i = o(1)$ uniformly for all $1 \le i \le m$  and $k_i \in n{I_n}$ in the sum.  It follows that uniformly for all sum indices $k_1, \ldots, k_n \in n{I_n}$
\begin{align}
	 \frac{f(x^{-1/\alpha}(1-y_1 - \ldots - y_m)) + o(1)}{f(x^{-1/\alpha})} = 1 + o(1).
\end{align}
By the choice of $a_n$ and $b_n$, and using $n a_n^{1+\alpha} \to \infty$,  it follows that
\[
	\frac{1}{n} \sum_{k \in n I_n} (k/n)^{-1-\alpha} = o(1) + \int_{I_n} y^{-1-\alpha}\,\mathrm{d}y   = o(1) +  C/\alpha.
\]
Hence
\begin{align*}
	\Exb{ \one_{I_n}(K_1/n) \cdots \one_{I_n}(K_m/n) \mid N_n = \ell} &\sim \left( \frac{c_w}{W(\rho_w)}  \right)^m n^{-m\alpha} \left(\int_{I_n} y^{-1-\alpha}\,\mathrm{d}y \right)^m \\
	&\sim \left( \frac{c_w}{W(\rho_w)}  \right)^m n^{-m\alpha} \left(\frac{C}{\alpha}\right)^m
\end{align*}
Hence we arrive at
\begin{align}
	\label{eq:nononononoyes}
	\Exb{(P_n^{I_n})_m} &=  R_m + o(1) + \sum_{\ell \in n^\alpha \Omega} \tilde{f}(x) x^m n^{-\alpha} \left( \frac{c_w}{W(\rho_w)}  \right)^m n^{-m\alpha} \left(\frac{C}{\alpha}\right)^m \\
	&= R_m + o(1) +  \Ex{Z^m, Z \in \Omega} \left( \frac{c_w}{W(\rho_w)}  \right)^m  \left(\frac{C}{\alpha}\right)^m. \nonumber
\end{align}
As for $R_m$,  we may write
\begin{align*}
	R_m &\le \sum_{\ell = 1}^n \one_{\ell \notin n^\alpha \Omega} \frac{\Pr{N=\ell} \Pr{S_\ell= n}}{\Pr{S_N = n}} \ell^m  \Exb{ \one_{I_n}(K_1/n) \cdots \one_{I_n}(K_m/n) \mid N_n = \ell}.
\end{align*}
Using that $f$ is continuous with $f(0)=0$ and $\lim_{t \to \infty}f(t) = \infty$, we obtain
\begin{align*}
	&\Exb{ \one_{I_n}(K_1/n) \cdots \one_{I_n}(K_m/n) \mid N_n = \ell} \\
	&\quad= \left( \frac{c_w}{W(\rho_w)}  \right)^m O(n^{-m(1+\alpha)}) \sum_{k_1, \ldots, k_m \in n{I_n}} y_1^{-1-\alpha} \cdots y_m^{-1-\alpha} \frac{\ell^{-1/\alpha} (f(x^{-1/\alpha}(1-y_1 - \ldots - y_m)) + o(1))}{\Pr{S_\ell= n}} \\
	&\quad= \left( \frac{c_w}{W(\rho_w)}  \right)^m O(n^{-m\alpha}) \ell^{-1/\alpha} \frac{f(x^{-1/\alpha})}{\Pr{S_\ell= n}} \left(\frac{C}{\alpha}\right)^m.
\end{align*}
Using Equation~\eqref{eq:stoppedsumdilute}, it follows that
\begin{align*}
	R_m &= O(n^{-\alpha}) \sum_{\ell = 1}^n \one_{\ell \notin n^\alpha \Omega}  \frac{L_v(\ell)}{L_v(n^\alpha)} x^m \tilde{f}(x)  \left( \frac{c_w}{W(\rho_w)}  \right)^m  \left(\frac{C}{\alpha}\right)^m.
\end{align*}
By the Potter bounds, it holds for any $\tilde{\epsilon}>0$ that
\[
	\frac{L_v(\ell)}{L_v(n^\alpha)} \le O(1) \max(x^{\tilde{\epsilon}}, x^{-\tilde{\epsilon}}).
\]
Thus
\begin{align}
	\label{eq:yeyeyeno}
	R_m = O(1) \max\left( \Exb{Z^{m+\tilde{\epsilon}}, Z \notin \Omega}, \Exb{Z^{m-\tilde{\epsilon}}, Z \notin \Omega} \right).
\end{align}
Since both  Equation~\eqref{eq:nononononoyes} and Equation~\eqref{eq:yeyeyeno} hold for all compact subsets $\Omega \subset ]0, \infty[$, it follows that
\begin{align}
	\lim_{n \to \infty}\Exb{(P_n^{I_n})_m} =  \Ex{Z^m} \left( \frac{c_w}{W(\rho_w)}  \right)^m  \left(\frac{C}{\alpha}\right)^m.
\end{align}
This is equal to the $m$th factorial moment of the random variable $\mathrm{Poi}(\upsilon Z)$ for $\upsilon= \frac{c_w C}{\alpha W(\rho_w)} $. By the method of moments, it follows that
\begin{align}
	\label{eq:intermediate}
	P_n^{I_n} \convdis \mathrm{Poi}(\upsilon Z).
\end{align}
Using Inequality~\eqref{eq:alealealign} it follows that
\begin{align*}
	\delta_0/ 2 &\le \Pr{K_{(k)} /n  < a_n} \\
	&= \Pr{P_n^{[a_n, 1]} < k} \\
	&\le \Pr{P_n^{I_n} < k}
\end{align*}
and hence
\[
\delta_0/ 2  \le \Pr{ \mathrm{Poi}(\upsilon Z) <k }.
\]
But this holds for any $C>0$ and thus for any $\upsilon>0$. We may choose $C$ (and hence $\upsilon$) large enough so that $\Pr{ \mathrm{Poi}(\upsilon Z) <k }<\delta_0/ 2$, yielding a contradiction. Hence the premise that $\Pr{\eta_k =0}>0$ was false, yielding $\eta_k >0$ almost surely. As this holds for any $k \ge 1$, we have $\Upsilon(]0,1]) = \infty$ almost surely. This completes the proof.
\end{proof}

\begin{proof}[Proof of Proposition~\ref{pro:explicit}]
Let $0<\delta\le 1$ be given, and set $I=[\delta,1]$. We will continue using the notation of the proof of Corollary~\ref{co:pointprocess}. From Equation~\eqref{eq:facmom} and its two preceding equations it follows that	
\begin{align}
\Ex{(P_\infty^I)_m} &= \frac{\left( \frac{c_w}{W(\rho_w)}  \right)^m }{\alpha  \Ex{(X_{\alpha}(\gamma, 1,0))^{\alpha(b-1)}}} \int_I \cdots \int_I \int_0^\infty  \frac{f(x^{-1/\alpha}(1-y_1 - \ldots - y_m))}{(y_1^{1+\alpha} \cdots y_m^{1+\alpha}) x^{b+1/\alpha-m}} \,\mathrm{d}x\,\mathrm{d}y_1 \cdots \mathrm{d}y_m \\
&= \frac{\left( \frac{c_w}{W(\rho_w)}  \right)^m }{  \Ex{(X_{\alpha}(\gamma, 1,0))^{\alpha(b-1)}}} \int_0^\infty\int_I \cdots \int_I   \frac{f(u(1-y_1 - \ldots - y_m))}{(y_1^{1+\alpha} \cdots y_m^{1+\alpha}) u^{\alpha(m+1-b)}} \mathrm{d}y_1 \cdots \mathrm{d}y_m\,\mathrm{d}u. \nonumber
\end{align}
We know that this integral converges since $P_\infty^I \le 1/\delta$. Alternatively, we could argue that it converges due to the asymptotics of $f(x)$ near $x=0$ and $x=\infty$: By~\cite[Thm. 4.10, Eq. (3.27), (3.57)]{2011arXiv1112.0220J} it holds that $f(x) = \frac{1}{\pi} \Gamma(1+\alpha) \sin(\pi \alpha ) x^{-1-\alpha} + O(x^{-1-2\alpha})$ as $x \to \infty$. Moreover $f$ is infinitely differentiable and $f(x) = 0$ for $x \ge 0$ we also have for all $k \ge 1$ that $f(x) = O(x^k)$ as $x \to 0$. 

By~\cite[Ex. 3.16]{2011arXiv1112.0220J}, the Laplace transform of $X_{\alpha}(\gamma, 1,0)$ is given by
\[
	\Ex{ e^{-t X_{\alpha}(\gamma, 1,0)}} = \exp(-\lambda t^\alpha), \qquad \Re(t) \ge 0
\]
with $\lambda =  - \frac{c_w}{W(\rho_w)} \Gamma(-\alpha)>0$ as in Equation~\eqref{eq:lambdadilute}. Hence the characteristic function
\[
	\Ex{e^{isX_{\alpha}(\gamma, 1,0)}} = \exp\left(-\lambda |s|^\alpha\left( \cos\left(\frac{\alpha \pi}{2}\right) -  i \mathrm{sgn}(s) \sin\left( \frac{\alpha \pi}{2}  \right) \right)   \right), \qquad s \in \ndR
\]
is integrable. By the inversion formula,
\[
	f(x) = \frac{1}{2\pi} \int_{-\infty}^\infty e^{-ixt + \frac{c_w}{W(\rho_w)} \Gamma(-\alpha) (-it)^\alpha} \,\mathrm{d}t.
\]
Thus
\[
\Ex{(P_\infty^I)_m} = \frac{\left( \frac{c_w}{W(\rho_w)}  \right)^m }{ 2\pi \Ex{(X_{\alpha}(\gamma, 1,0))^{\alpha(b-1)}}}  \int_0^\infty \int_{-\infty}^\infty  \frac{1}{ u^{\alpha(m+1-b)}} \left( \int_I \frac{e^{ityu}}{y^{1+\alpha}}\,\mathrm{d}y\right)^m e^{-iut + \frac{c_w}{W(\rho_w)} \Gamma(-\alpha) (-it)^\alpha} \,\mathrm{d}t\,\mathrm{d}u.
\]
By Equations~\eqref{eq:xalphadilute} and~\eqref{eq:lambdadilute},
\begin{align*}
	\Ex{X_{\alpha}(\gamma, 1,0)^{\alpha(b-1)}} =  \left( \frac{c_w}{W(\rho_w)}  \right)^{b-1}  \left( \frac{\Gamma(1-\alpha)}{\alpha} \right)^{b-1} \frac{\Gamma(2-b)}{\Gamma(1-{\alpha(b-1)})}.
\end{align*}
Using the substitutions $\tilde{u} = u \left(\frac{c_w}{W(\rho_w)}\right)^{1/\alpha} $ and $\tilde{t} = t \left(\frac{c_w}{W(\rho_w)}\right)^{-1/\alpha}$, and then renaming $\tilde{u}, \tilde{t}$ to $u,t$ again yields
\begin{align}
	\Ex{(P_\infty^I)_m} = \frac{ \Gamma(1- \alpha(b-1)) }{ 2\pi (-\Gamma(-\alpha))^{b-1} \Gamma(2-b)}  \int_0^\infty \int_{-\infty}^\infty  \frac{1}{ u^{\alpha(m+1-b)}} \left( \int_I \frac{e^{ityu}}{y^{1+\alpha}}\,\mathrm{d}y\right)^m e^{-iut +  \Gamma(-\alpha) (-it)^\alpha} \,\mathrm{d}t\,\mathrm{d}u.
\end{align}
Thus
\begin{align*}
	\Exb{z^{P_\infty^I}} &= \sum_{m=0}^{\infty} \frac{1}{m!} \Exb{(P_\infty^I)_m} (z-1)^m \\
	&=  \frac{ \Gamma(1- \alpha(b-1)) }{ 2\pi |\Gamma(-\alpha)|^{b-1} \Gamma(2-b)}  \int_0^\infty \int_{-\infty}^\infty   u^{\alpha(b-1)} \exp\left( \frac{z-1}{u^\alpha}\int_I \frac{e^{ityu}}{y^{1+\alpha}}\,\mathrm{d}y -iut -  |\Gamma(-\alpha)| (-it)^\alpha\right)  \,\mathrm{d}t\,\mathrm{d}u. \nonumber
\end{align*}
The reason why we may interchange summation and integration is a bit delicate, since the absolute value of the integrand grows like $u^{\alpha(b-1)}$ as $u \to \infty$, and $\alpha(b-1) \in ]-1,0[$. We may pull the sum inside of the first integral, because $\int_{-\infty}^\infty \left( \int_I \frac{e^{ityu}}{y^{1+\alpha}}\,\mathrm{d}y\right)^m e^{-iut +  \Gamma(-\alpha) (-it)^\alpha} \,\mathrm{d}t = 0$ for all $m > 1/\delta$. We may then pull the sum into the second integral, because $|\left( \int_I \frac{e^{ityu}}{y^{1+\alpha}}\,\mathrm{d}y\right)^m e^{-iut +  \Gamma(-\alpha) (-it)^\alpha}| \le \left( \int_I \frac{1}{y^{1+\alpha}}\,\mathrm{d}y\right)^m \exp(- |\Gamma(-\alpha)|\cos(\pi \alpha / 2) |t|^\alpha)$. For similar reasons, we may differentiate with respect to $z$ below the two integrals, yielding
\begin{align*}
\Pr{\eta_k < x} &= \sum_{j=0}^{k-1}\frac{1}{j!} \frac{\mathrm{d}^j}{\mathrm{d} z^j} \Ex{z^{P_\infty^I}} \bigg\vert_{z=0} \\
&= \frac{ \Gamma(1- \alpha(b-1)) }{ 2\pi |\Gamma(-\alpha)|^{b-1} \Gamma(2-b)}  \int_0^\infty \int_{-\infty}^\infty   u^{\alpha(b-1)} \exp\left( -\frac{1}{u^\alpha}\int_I \frac{e^{ityu}}{y^{1+\alpha}}\,\mathrm{d}y -iut -  |\Gamma(-\alpha)| (-it)^\alpha\right)  \\&\quad\,\, \sum_{j=0}^{k-1}\frac{1}{j!} \left( \frac{1}{u^\alpha}\int_I \frac{e^{ityu}}{y^{1+\alpha}} \right)^j  \,\mathrm{d}t\,\mathrm{d}u. \nonumber
\end{align*}
This completes the proof.
\end{proof}

\subsection{Product structures and extended composition schemes}

\begin{proof}[Proof of Lemma~\ref{te:extended}]
	It suffices to treat the case $\ell = 2$. The case $\ell > 2 $ then follows easily by induction. It is easy to see that
	\[
	(P_1, P_2) \eqdist ( (A_1, A_2) \mid A_1 + A_2 = n)
	\]
	for random variables $A_1, A_2$ with probability generating functions \[
	\Ex{z^{A_i}} = \frac{W_i(\rho_o z)}{W_i(\rho_o)}
	\]
	for $i=1,2$. In particular
	\[
	\Ex{z^{A_1 + A_2}} = \frac{O(z\rho_o)}{O(\rho_o)}.
	\]
	By \cite[Lem. 4.9]{MR3097424} the assumptions imply that $(o_n)_{n \ge 0}$ satisfies the subexponentiality condition and that the ratio $o_n / r_n$  converges to a positive constant. Consequently, the limits \[
	p_1 = \lim_{n \to \infty} \frac{w_n^{(1)} O(\rho_o)}{o_n W_1(\rho_o)} = \lim_{n \to\infty} \frac{\Pr{A_1 = n}}{\Pr{A_1 + A_2 = n}}
	\] and \[
	p_2 = \lim_{n \to \infty} \frac{w_n^{(2)} O(\rho_o)}{o_n W_2(\rho_o)} = \lim_{n \to\infty} \frac{\Pr{A_2 = n}}{\Pr{A_1 + A_2 = n}}
	\] exist. By \cite[Lem. 4.9]{MR3097424} it follows that $p_1 + p_2 = 1$.
	
	Since the density of $A_1 + A_2$ satisfies the subexponentiality condition it follows for each integer $k \ge 0$
	\begin{align*}
		\Pr{P_2 = k} &= \Pr{A_2 = k} \frac{\Pr{A_1=n-k}}{\Pr{A_1 + A_2 = n}} \\
		&\sim \Pr{A_2 = k} \frac{\Pr{A_1=n-k}}{\Pr{A_1 + A_2 = n-k}} \\
		&\to \Pr{A_2 = k} p_1
	\end{align*}
	as $n \to \infty$. Likewise
	\[
	\lim_{n \to \infty} \Pr{P_1 = k} = \Pr{A_1 = k} p_2.
	\]
	In other words, with limiting probability $p_1$ the component $P_1$ is macroscopic and $P_2$ converges to $A_2$, and likewise with limiting probability $p_2$ the component $P_2$ is macroscopic and $P_1$ converges in distribution to $A_1$. This verifies~\eqref{eq:tvapproxproduct} and hence completes the proof.
\end{proof}

\section*{Acknowledgement}

I warmly thank the referee for the thorough reading of the manuscript and for the helpful comments.




\end{document}